\documentclass[11pt]{amsart}
\usepackage[utf8]{inputenc}
\usepackage{moresize}
\usepackage{blindtext, tikz-cd, enumerate}
\usepackage{subfig}
\usepackage{ytableau}
\usepackage{enumitem}
\usepackage{multicol}

\usepackage[hmargin=1.25in]{geometry} 
\usepackage{amssymb}
\usepackage{hyperref}
\hypersetup{colorlinks=true, linkcolor=red, filecolor=magenta, urlcolor=cyan, citecolor=magenta}
\urlstyle{same}

\usepackage{amsthm}
\usepackage{amsmath}
\usepackage{mathtools}
\theoremstyle{plain}
\newtheorem{thm}{Theorem}
\newtheorem*{thm1}{Theorem A}
\newtheorem*{thm2}{Theorem B}
\newtheorem*{prop2}{Proposition C}
\newtheorem*{cor3}{Corollary D}
\newtheorem{lemma}[thm]{Lemma}
\theoremstyle{definition}
\newtheorem{defn}[thm]{Definition}
\newtheorem{prop}[thm]{Proposition}
\newtheorem{cor}[thm]{Corollary}
\newtheorem{rem}[thm]{Remark}
\newtheorem{alg}[thm]{Algorithm}
\newtheorem{ex}[thm]{Example}

\numberwithin{equation}{section}
\numberwithin{thm}{section}

\newcommand{\dom}{\ensuremath{  \trianglerighteq}}

\DeclareMathOperator{\charge}{charge}
\DeclareMathOperator{\cocharge}{cocharge}

\DeclareMathOperator{\syt}{SYT}
\DeclareMathOperator{\ssyt}{SSYT}
\DeclareMathOperator{\ctype}{ctype}
\DeclareMathOperator{\Frob}{Frob}
\DeclareMathOperator{\Hilb}{Hilb}
\DeclareMathOperator{\sh}{Sh}
\DeclareMathOperator{\shape}{shape}
\DeclareMathOperator{\rev}{rev}
\DeclareMathOperator{\cc}{cc}
\DeclareMathOperator{\cw}{c}

\DeclareMathOperator{\des}{Des}

\DeclareMathOperator{\sgn}{sgn}
\DeclareMathOperator{\rw}{rw}
\def\Frobq{\Frob_q}
\def\Hilbq{\Hilb_q}
\def\mHL{\tilde{H}}

\title{A charge monomial basis of the Garsia-Procesi ring }
\author{Mitsuki Hanada}
\address{\scriptsize{Department of Mathematics, University of California, Berkeley
}}
\email{\scriptsize{mhanada@berkeley.edu}}

\begin{document}
\begin{abstract}
We construct a basis of the Garsia-Procesi ring using the catabolizability type of standard Young tableaux and the charge statistic.  
This basis turns out to be equal to the descent basis defined in \cite{CC}.
Our new construction connects the combinatorics of the basis with the well-known combinatorial formula for the modified Hall-Littlewood polynomials $\mHL_\mu[X;q]$, due to Lascoux, which expresses the polynomials as a sum over standard tableaux that satisfy a catabolizability condition. 
In addition, we prove that identifying a basis for the antisymmetric part of $R_{\mu}$ with respect to a Young subgroup $S_\gamma$ is equivalent to finding pairs of standard tableaux that satisfy conditions regarding catabolizability and descents. This gives an elementary proof of the fact that the graded Frobenius character of $R_{\mu}$ is given by the catabolizability formula for $\mHL_\mu[X;q]$.
\end{abstract}
\maketitle
\section{introduction}
The polynomial ring $\mathbb{C}[\mathbf{x}] = \mathbb{C}[x_1,\dots, x_n]$ has a natural $S_n$ action permuting the variables. 
Taking the quotient of this polynomial ring by the ideal generated by $S_n$-invariant polynomials with no constant term defines the classical \textit{coinvariant ring}:
\[R_{1^n}= \mathbb{C}[\mathbf{x}]/\langle e_1(\mathbf{x}),\dots,  e_n(\mathbf{x})\rangle,\] where $e_d(\mathbf{x})$ is the $d$th elementary symmetric function. 
The coinvariant ring $R_{1^n}$ is a graded version of the regular representation of $S_n$.
As a graded algebra with an $S_n$ action, it is isomorphic to the cohomology ring of the complex flag variety $\mathcal{F}_n$. 
There are two well-known monomial bases of $R_{1^n}$, the \textit{Artin basis} and the \textit{Garsia-Stanton descent basis}, both with elements indexed by permutations in $S_n$. 
In this paper, we will focus on subsets of the latter basis. 
  
Certain quotients $R_\mu$ of $R_{1^n}$ correspond to $\mathbb{C}S_n$-modules that come from Springer fibers. 
Given a partition $\mu$ of $n$, the Springer fiber $\mathcal{F}_\mu\subset \mathcal{F}_n$  is the set of flags that are fixed by a unipotent matrix with Jordan type $\mu$. The cohomology ring $H^{*}(\mathcal{F}_\mu)$ of the Springer fiber has an $S_n$ action which is compatible with that on $H^{*}(\mathcal{F}_n)$ \cite{Springer1978}. 

Though the definitions above can be extended to different Lie types, there are certain nice properties when we consider Type A. In particular, the map $H^*(\mathcal{F}_n) \to H^*(\mathcal{F}_\mu)$ is a surjection. 
From this, we have a concrete realization of $H^*(\mathcal{F}_\mu)$ as a quotient of $R_{1^n}$ by an $S_n$ invariant ideal. The following presentation is due to De Concini-Procesi \cite{deCon-Proc} and Tanisaki \cite{Tanisaki}:
\[R_\mu = \mathbb{C}[\mathbf{x}]/\langle e_d(S) \ | \ S\subset \mathbf{x}, d> |S| -p_{n-|S|}(\mu) \rangle\]
where $p_{k}(\mu)$ is the number of boxes of the Young diagram of $\mu$ that are not in the first $k$ columns.
Garsia and Procesi \cite{GP} constructed a monomial basis of $R_\mu$ that is a subset of the Artin monomials.
In light of their work, we refer to $R_\mu$ as the \textit{Garsia-Procesi ring} indexed by $\mu$. 
The question of finding a subset of the Garsia-Stanton descent monomials that is a monomial basis of $R_\mu$ was unresolved until recently, when Carlsson and Chou gave a construction using shuffles of descent compositions  \cite{CC}. 

The graded Frobenius character (see Section \ref{subsec: symm functions} for definition) of the Garsia-Procesi ring $R_\mu$ is given by the modified Hall-Littlewood polynomials 
$\tilde{H}_\mu[X;q]$ \cite{HottaSpringer}. These polynomials have a combinatorial formula due to  
Lascoux \cite{Lascoux} and Butler \cite{butler}:

\begin{align}\label{eq:catabolizable HL formula}
        \mHL_\mu[X;q] &=  \sum\limits_{\substack{T\in \syt_n\\  \ctype(T)\unrhd\mu}}  q^{\cocharge(T)} s_{\shape(T)}(X).
\end{align}
The sum is over standard Young tableaux of size $n$ that satisfy a condition related to an operation called \textit{catabolism} (see Section \ref{section:cat} for definition). Catabolism, which was defined by Lascoux \cite{Lascoux},  is an operation on tableaux, or words, that lowers the cocharge value. 
Using catabolism, we can associate to each tableau $T$ a partition called its \textit{catabolizability type}, denoted $\ctype(T)$.
The sum in Equation \eqref{eq:catabolizable HL formula} is over all standard tableaux $T$ that have $\ctype(T) \unrhd \mu$, where $\unrhd$ denotes the dominance order on partitions.

In this paper, we give a monomial basis of $R_{\mu}$, where the monomials are indexed by standard tableaux that satisfy a catabolizability type condition. 
The set we construct is the natural subset of the descent basis to consider in light of the combinatorial formula \eqref{eq:catabolizable HL formula} for the graded Frobenius series.

For any permutation $w$, we associate a word $\cw(w)$ of the same length called the \textit{charge word} of $w$ such that the sum of the entries of $\cw(w)$ is charge($w$).
We refer to the monomial $\mathbf{x}^{\cw(w)}$, where the exponents are given by the charge word of $w$, as the \textit{charge monomial} corresponding to $w$.

\begin{thm1}
\makeatletter\def\@currentlabel{1.1}\makeatother
 Let $\mu$ be a partition of $n$.
     The set of charge monomials 
     \[ \{\mathbf{x}^{\cw(w)}\ | \ w\in S_n, \ \ctype(P(w)^t)\unrhd\mu\}\]
    is a monomial basis of $R_{\mu}$.
    In fact, it coincides with the basis given by Carlsson--Chou \cite{CC}.
\end{thm1}

Our basis consists of charge monomials corresponding to certain permutations in $S_n$. We later see that the set of charge monomials for all permutations in $S_n$ is equal to the Garsia-Stanton descent basis of $R_{1^n}$. Thus our set of monomials is indeed a subset of the descent basis.  

Although our basis is equal to that of Carlsson--Chou, our construction of it is different and provides the first direct connection between the structure of the Garsia-Procesi ring and the catabolizability formula for the modified Hall-Littlewood polynomial that gives its graded Frobenius character. 
It is immediate that our basis is of the correct size in each dimension, a fact that was nontrivial from the construction of Carlsson--Chou. 
The construction of our basis is clearly compatible with the Hilbert series of $R_{\mu}$ that we compute from \eqref{eq:catabolizable HL formula}:
\begin{align}
    \Hilbq(R_{\mu})= \sum\limits_{\substack{T\in \syt_n\\  \ctype(T)\unrhd\mu}}  q^{\cocharge(T)} |\syt(\shape(T))|
\end{align}
where $\syt(\lambda)$ denotes the set of standard Young tableaux of shape $\lambda$.

Furthermore, the relation between the charge monomial construction and the construction of Carlsson--Chou gives insight to how catabolizability type behaves under shuffling cocharge words  (see Sections \ref{subsection:charge}, \ref{section: coinvariant ring and garsia procesi} for definitions). In particular, we can show that there is a lower bound on the catabolizability type of the permutation we get from the shuffles.

\begin{thm2}\label{thm: sum of ctypes}
    Let $u^{(1)}, u^{(2)}, \dots, u^{(l)}, w$ be
    permutations such that $\cc(w)$ is a shuffle of $\cc(u^{(1)}),\dots, \cc(u^{(l)})$.
    We have
    \[\ctype(w) \unrhd \ctype(u^{(1)}) + \ctype( u^{(2)}) + \cdots+ \ctype(u^{(l)}),\]
    where $\ctype(u^{(1)}) + \ctype( u^{(2)}) + \cdots +  \ctype(u^{(l)})$ is the partition given by the partwise sum of $\ctype(u^{(1)}),\ctype( u^{(2)}),\dots,\ctype(u^{(l)})$.
\end{thm2}

Though it is easy to identify the ungraded Frobenius character of $R_\mu$ directly from its structure, as exhibited in Garsia--Procesi \cite{GP}, it is challenging to do the same for the graded character. This often requires heavy geometric machinery (\cite{deCon-Proc}) or further combinatorial properties of the ring and character (\cite{GP}).
However, our basis construction gives a way to easily identify the graded Frobenius character of $R_\mu$ as the modified Hall-Littlewood polynomial $\mHL_\mu[X;q]$ that only depends on \eqref{eq:catabolizable HL formula} and the ungraded Frobenius character.
For any Young subgroup $S_\gamma \subset S_n$, we define $N_{\gamma} = \sum_{\sigma\in S_\gamma} \sgn(\sigma) \sigma$ to be the antisymmetrizing element with respect to $S_\gamma$.
The graded Frobenius character $\Frobq(R_{\mu})$ is determined by $\Hilbq(N_\gamma R_{\mu}) = \langle e_\gamma, \Frobq(R_{\mu})\rangle$ for all Young subgroups $S_\gamma$, where $N_\gamma R_{\mu}$ is the antisymmetric part of $R_{\mu}$ with respect to $S_\gamma\subset S_n$.
We show that $\Hilbq(N_\gamma R_{\mu})$ is easily determined using our basis construction and the ungraded Frobenius character of $R_{\mu}$, by proving that the natural subset of our basis enumerated by $\langle e_\gamma, \Frobq(R_{\mu})\rangle$ has the property that applying $N_\gamma$ yields a basis for $N_\gamma R_{\mu}$.

\begin{prop2}\makeatletter\def\@currentlabel{B}\makeatother\label{prop2}
  Let $ \mu$ be a partition of $n$ and $\gamma = (\gamma_1,\dots, \gamma_l)$ be a composition of $n$. 
   The set
      \[ \{ N_\gamma \mathbf{x}^{\cw(w)} \ | \  w \in S_n, \ \ctype(P(w)^t) \unrhd\mu, \ \des(Q(w)) \subset \{\gamma_1,\gamma_1 + \gamma_2, \dots, \gamma_1 + \cdots + \gamma_{l-1}\}\}\]
    is a basis of $N_\gamma R_{\mu}$, where the pair $(P(w),Q(w))$ is the pair of standard tableaux in bijection with the permutation $w$ via the Schensted correspondence and $\des(Q(w))$ is the descent set of the tableau $Q(w)$.
\end{prop2}

The following corollary is immediate from Proposition \ref{prop2}  and the catabolizability formula \eqref{eq:catabolizable HL formula}.
\begin{cor3}
For any partition $\mu$ of $n$ we have $\Frobq(R_{\mu}) =\mHL_{\mu}[X;q].$
\end{cor3}

The paper is organized as follows. 
In Section \ref{section:background}, we review the definitions and  combinatorial tools we will use throughout the paper, including the Schensted correspondence, cocharge, and catabolism. 
In Section \ref{section: coinvariant ring and garsia procesi} we recall definitions regarding the coinvariant ring and the Garsia-Procesi rings and their monomial bases, as well as recall the construction of the Carlsson--Chou basis from \cite{CC}.
In Section \ref{section:descent basis}, we give the construction of our set and prove that it agrees with the Carlsson--Chou basis, which shows that our set is indeed a basis due to their results. In particular, we show that our set of monomials is contained in their basis and then conclude they are the same set since they have the same cardinality.
In Section \ref{section:ctype sum}, we introduce an algorithm that can be used to prove the other direction of containment, as well as other results about catabolizability types. 
In Section \ref{sec: remarks}, we use a result from the previous section to show that this basis gives us a simple description of the Hilbert series of $N_\gamma R_{\mu}$. From this, we show $\Frobq(R_{\mu})=\mHL_{\mu}[X;q]$.

\section{Background}\label{section:background}
\subsection{Words and Partitions} \label{subsec: words and parts}
For any word $w =  w_1w_2\dots w_n$ we write  $\rev(w) := w_n w_{n-1}\dots w_1$ for the reverse word.  A permutation $w\in S_n$ is a word of length $n$ where each element in $\{1,\dots, n\}$ appears exactly once.

The \textit{descent set} of $w\in S_n$ is defined to be $\des (w) := \{i \ | \ w_i > w_{i+1}\}.$
The statistic \textit{major index} (maj) is defined to be
$\text{maj}(w) = \sum_{i \in \des(w)} i.$
This statistic is Mahonian, meaning that it is distributed in the following way:
\begin{align*}
  \sum\limits_{w\in S_n} q^{\text{maj}(w)} = [n]_q!,
\end{align*}
where $[n]_q! = [1]_q[2]_q\cdots [n]_q$ with $[k]_q = 1 + q+\cdots + q^{k-1}$.

A \textit{partition} $\lambda$ of $n$, denoted $\lambda\vdash n$,  is given by $\lambda = (\lambda_1,\lambda_2,\dots,\lambda_l)$ where $\lambda_1\geq \lambda_2\geq \cdots \geq \lambda_l>0$ and $\lambda_1+\cdots + \lambda_l = n$. A \textit{composition} $\gamma$ of $n$, denoted $\gamma\models n$, is a sequence $(\gamma_1,\gamma_2,\dots,\gamma_l)$ where $\gamma_i >0$ and $\gamma_1 + \gamma_2 + \cdots+\gamma_l = n$. 

The \textit{Young diagram} of a partition $\lambda\vdash n$ is the partial grid of $n$ boxes, where we have $\lambda_i$ squares in the $i$th row. 
We use French notation, meaning that the first part corresponds to the bottom row.  For example, the partition $(3,1)$ corresponds to the Young diagram \ytableausetup{smalltableaux}\[\ydiagram{1,3}.\]
The \textit{transpose} of a partition $\lambda$, denoted $\lambda^t$, is the partition we get by switching the rows and columns of its Young diagram. 

There is a partial ordering on the set of all the partitions of size $n$ called the \textit{dominance} ordering, denoted $\unrhd$, defined by 
\[\mu\unrhd \lambda \ \  \Leftrightarrow \ \  \mu_1 + \cdots + \mu_k \geq \lambda_1+\cdots +  \lambda_k \text{ for all } k\]
It is well known that $\mu \unrhd \lambda \ \  \Leftrightarrow \ \   \mu^t \unlhd \lambda^t$. If we move a box of $\lambda$ to a lower row so that the resulting shape $\mu$ is a partition, we have $\mu \unrhd \lambda.$
Dominance is the transitive closure of moving boxes down. 

A \textit{semistandard (Young) tableau} of shape $\lambda$ is a filling of $\lambda$ with positive integers such that the rows are weakly increasing from left to right and the 
columns are strictly increasing from bottom to top. The \textit{weight} of a semistandard tableau $T$ is the tuple $(m_1, m_2,\dots)$  where $m_i$ is the number of times $i$ appears in $T$. We denote the shape of $T$ by $\shape(T)$.

A \textit{standard (Young) tableau} is a semistandard tableau where each entry in $\{1,2,\dots, n\}$ appears in the filling exactly once:  that is, it is a semistandard tableau of weight $(1^n)$. For any partition $\lambda, $ we denote the set of standard Young tableaux of shape $\lambda$ by $\syt(\lambda)$. We denote the set of standard Young tableaux of all shapes of size $n$ by $\syt_n$.

The \textit{reading word} $\rw(T)$ of a tableau $T$ is the word we get by concatenating the row words, where each row is read from left to right, from top to bottom. The transpose of a tableau $T$ with shape $\lambda$, denoted $T^t$, is the filling of $\lambda^t$ we get by swapping the rows and columns of $T$.

\subsection{Schensted correspondence}\label{subsec:RSK}
The Schensted correspondence gives a bijection from permutations $w\in S_n$ to pairs $(P(w),Q(w))$ of standard Young tableau of size $n$ of the same shape. We say $P(w)$ is the \textit{insertion tableau} and $Q(w)$ is the \textit{recording tableau} of $w$.
We note that for any given standard tableau $T$ of shape $\lambda$, the number of permutations $w$ with $P(w) = T$ is $|\syt(\shape(T))|$, since we have one such permutation for each possible choice of $Q(w)$.

We recall some properties of the Schensted correspondence that we will use in following sections. For more details, see \cite[Chapter 7]{EC2}. 
For a standard Young tableau $T$, we define the descent set of $T$ to be  $\des(T) = \{i  \ | \ i \text{ appears below } i+1 \text{ in } T\}.$
Note that $\des(T) = \des(\rw(T)^{-1})$.
\begin{prop}[{\cite[Lemma 7.23.11]{EC2}}]
    For any $w\in S_n$, we have $\des(w) = \des(Q(w))$. 
\end{prop}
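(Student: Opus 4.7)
The plan is to analyze the RSK insertion process step by step and track where new boxes appear in the recording tableau. Since the entry $i+1$ in $Q(w)$ sits in the cell added at the $(i+1)$-th insertion step, it suffices to compare the locations of two consecutive newly-added boxes: the statement reduces to showing that the new cell from inserting $w_{i+1}$ lies strictly above (in French notation) the new cell from inserting $w_i$ if and only if $w_i > w_{i+1}$.

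The main tool is the standard row-insertion comparison lemma. Fix any tableau $P$ and values $x, y$; row-insert $x$ into $P$, then row-insert $y$ into the result. If $x \leq y$, the bumping path of $y$ stays weakly to the right of that of $x$, and the new cell from $y$ lies strictly to the right of and weakly below (in French notation) the new cell from $x$. If instead $x > y$, the bumping path of $y$ stays strictly to the left of that of $x$, and the new cell from $y$ lies weakly to the left of and strictly above the new cell from $x$. I would prove this lemma by induction on the row being bumped, using at each step the fact that within a single row, a smaller bumped value must appear to the left of a larger bumped value in the next row. The main delicacy is tracking the non-strict vs.\ strict inequalities and handling the case where one insertion terminates by placing its new box before the other does.

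Given the lemma, applying it with $x = w_i$, $y = w_{i+1}$, and $P = P_i$ (the partial insertion tableau after inserting $w_1, \dots, w_i$) yields the equivalence
\[
i \in \des(w) \iff \text{the new cell for } w_{i+1} \text{ lies strictly above the new cell for } w_i,
\]
which by the paper's definition $\des(Q(w)) = \{i : i \text{ appears below } i+1 \text{ in } Q(w)\}$ is exactly the condition $i \in \des(Q(w))$. Ranging over $i = 1, \dots, n-1$ gives $\des(w) = \des(Q(w))$. The main obstacle is the row-insertion comparison lemma itself; everything else is a direct unpacking of definitions.
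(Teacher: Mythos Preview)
Your proposal is correct and follows the standard argument (essentially the one in Stanley's \emph{Enumerative Combinatorics}, which is what the paper cites). Note, however, that the paper does not give its own proof of this proposition: it is stated with a citation to \cite[Lemma 7.23.11]{EC2} and used as a black box, so there is no in-paper argument to compare against.
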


\begin{prop}[{\cite[Corollary A1.2.11]{EC2}}]\label{thm:RSK rev}
For any $w\in S_n$, we have $P(\rev(w))= (P(w))^t.$
\end{prop}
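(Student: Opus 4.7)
The plan is to use the characterization of $P(w)$ via Knuth equivalence, together with a reversal-symmetry of the Knuth relations. Recall that $P(w) = P(v)$ if and only if $w$ and $v$ are Knuth equivalent, via the elementary Knuth moves $acb \leftrightarrow cab$ and $bac \leftrightarrow bca$ with $a < b < c$.

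The first observation I would make is that reversing any Knuth move produces another Knuth move: reversing $acb \leftrightarrow cab$ gives $bca \leftrightarrow bac$, and vice versa. Hence if $w$ and $v$ are Knuth equivalent, then so are $\rev(w)$ and $\rev(v)$. In particular, the insertion tableau $P(\rev(w))$ depends only on the Knuth class of $w$, i.e.\ only on $P(w)$. It therefore suffices to prove the identity $P(\rev(\rw(T))) = T^t$ for each standard Young tableau $T$, since then for an arbitrary $w$ with $P(w)=T$ we will have $\rev(w)$ Knuth-equivalent to $\rev(\rw(T))$, giving $P(\rev(w)) = T^t = P(w)^t$.

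For the remaining step, I would introduce the column reading word of $T$, obtained by reading each column of $T$ from top to bottom (a strictly decreasing sequence in French notation) and proceeding from left to right. A direct column-insertion computation shows that this word also inserts to $T$, so it is Knuth-equivalent to $\rw(T)$. Its reverse reads the columns of $T$ from right to left, each column from bottom to top; unpacking the transpose via $T^t_{i,j} = T_{j,i}$ (so that row $j$ of $T^t$ in French notation records column $j$ of $T$ from bottom to top, and rows of $T^t$ read top to bottom correspond to columns of $T$ right to left), one checks that this reversed word is exactly the row reading word $\rw(T^t)$. Since $P(\rw(T^t)) = T^t$ by direct insertion, the reversal-symmetry established above now yields $P(\rev(\rw(T))) = P(\rw(T^t)) = T^t$, completing the proof.

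The main obstacle is the notational bookkeeping between row- and column-reading words under transposition in French notation; once the column reading word is introduced this is routine but delicate. A more conceptual alternative would be to first use Greene's theorem to show $\shape(P(\rev(w))) = \shape(P(w))^t$ (turning increasing subsequences into decreasing ones under reversal), and then match individual entries by a descent-based induction or a dual Knuth equivalence argument.
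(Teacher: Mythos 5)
Your argument is correct, but note that there is no in-paper proof to compare it against: the paper imports this statement directly from \cite[Corollary A1.2.11]{EC2} (it is Schensted's classical reversal theorem) and never proves it. Your route --- observing that reversal carries the elementary Knuth move $acb \leftrightarrow cab$ to the move $bca \leftrightarrow bac$ (with $a<b<c$) and hence sends Knuth classes to Knuth classes, reducing the claim to $P(\rev(\rw(T))) = T^t$ for a standard tableau $T$, and then checking that the column reading word of $T$ both inserts to $T$ and reverses to $\rw(T^t)$ --- is a standard, self-contained proof, and the bookkeeping works out in the paper's French conventions (column $j$ of $T$ read bottom to top is row $j$ of $T^t$, and reading columns right to left matches concatenating rows of $T^t$ top to bottom). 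Two points are worth making explicit in a write-up: (i) you implicitly use that any $w$ is Knuth equivalent to $\rw(P(w))$, i.e.\ $P(\rw(T))=T$, and that the column word of $T$ is Knuth equivalent to $\rw(T)$; both are standard but should be cited or given their short inductive proofs; and (ii) the reversal symmetry of the Knuth relations genuinely requires distinct letters --- for words with repetitions the relations $xzy \equiv zxy$ ($x \le y < z$) and $yxz \equiv yzx$ ($x < y \le z$) are not exchanged by reversal, and the statement itself fails for non-permutation words --- so the hypothesis $w \in S_n$ is doing real work and deserves a sentence. The Greene's-theorem alternative you mention gives the shape statement quickly but matching individual entries takes additional effort, so the Knuth-move argument is the cleaner complete route.
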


\subsection{Symmetric functions}\label{subsec: symm functions}
For more detailed references on symmetric functions or the representation theory of the symmetric group, see \cite[Chapter 7]{EC2} or \cite{macdonald}.
Let $\Lambda$ denote the ring of symmetric functions in variables $X = (x_1,x_2,\dots, )$.
We follow Macdonald's notation \cite{macdonald} for the monomial symmetric functions $\{m_\lambda\}$, the elementary symmetric functions $\{e_\lambda\}$, the complete (homogeneous) symmetric functions $\{h_\lambda\}$, and the Schur functions $\{s_\lambda\}$.

The Schur functions $s_\lambda$ can be expanded in terms of the monomial symmetric functions:
\[s_\lambda  = \sum\limits_{\gamma \ \vdash |\lambda|} K_{\lambda,\gamma} m_{\gamma}\]
where for any partition $\gamma$, the coefficient $K_{\lambda,\gamma}$, which we call the \textit{Kostka number}, counts the number of semistandard Young tableaux of shape $\lambda$ and weight $\gamma$.
 These coefficients can also be expressed in terms of standard Young tableaux:

 \begin{prop}
     For two partitions $\lambda,\gamma = (\gamma_1,\gamma_2,\dots, \gamma_l)\vdash n$, we have
      \begin{align}\label{eq: kostka numb}
     K_{\lambda,\gamma} = | \{T\in \syt(\lambda)  \ | \ \des(T) \subset \{\gamma_1,\gamma_1 + \gamma_2, \dots, \gamma_1 +\cdots + \gamma_{l-1}\} \} |
 \end{align}
 \end{prop}
 \begin{proof}
     We know that $K_{\lambda,\gamma} = | \ssyt(\lambda,\gamma)|$, where $\ssyt(\lambda,\gamma)$ denotes the set of semistandard tableaux of shape $\lambda$, weight $\gamma$. For any $T\in \ssyt(\lambda,\gamma)$, we can construct a $S\in \syt(\lambda)$ by replacing all the $i$ in $T$, from left to right, with $(\gamma_1+\dots+\gamma_{i-1} +1),\dots,(\gamma_1+\dots+\gamma_{i})$. By construction, we have that the only possible descents in $S$ are in $\{\gamma_1,\gamma_1+\gamma_2,\dots, \gamma_1+\dots,\gamma_{l-1}\}$. We can see that this is a bijection by checking that we can reverse the construction and recover the original semistandard tableau.
 \end{proof}

A symmetric function $f$ is \textit{Schur-positive} if all the coefficients in the Schur expansion of $f$ are nonnegative integers. 

The \textit{Hall inner product} is the inner product defined on $\Lambda$ by the relation $\langle m_\lambda, h_\gamma\rangle = \delta_{\lambda,\gamma}$. We have $\langle f,g\rangle = \langle g,f\rangle$ for any $f,g\in \Lambda$.
The Schur functions are an orthonormal basis with respect to this inner product.
The involution $\omega$ on $\Lambda$ is defined by $\omega e_\lambda= h_\lambda$. This map $\omega$ is an isometry: that is, we have $\langle f,g\rangle = \langle \omega f,\omega g\rangle$ for any $f,g \in \Lambda$.
We also have $\omega s_\lambda = s_{\lambda^t}$ for any $\lambda$.

Recall that the irreducible representations of $S_n$ are indexed by partitions of size $n$.
The \textit{Frobenius characteristic map} is a map $F$ from (virtual) characters of $S_n$ to symmetric functions of degree $n$ that takes the irreducible character  indexed by $\lambda\vdash n$ to the Schur function $s_\lambda$. This map encodes the character of an $S_n$ representation $V$ into a Schur-positive symmetric function.
The \textit{Frobenius character} of $V$, denoted $\Frob(V)$, is defined to be $F(\chi_V)$, where $\chi_V$ is the character of $V$. We have $\Frob(V_\lambda) = F(\chi_{V_\lambda}) = s_\lambda$, where $V_\lambda$ is the irreducible $S_n$ representation indexed by $\lambda$.

 For a graded vector space $V = \oplus_{d\geq0} V_d$, the   \textit{Hilbert series} $\Hilbq(V)$ is 
\[\Hilbq(V) = \sum\limits_{d\geq 0} q^d \text{dim}(V_d).\]
For a graded $\mathbb{C}S_n$-module $V = \oplus_{d\geq0} V_d$, the \textit{graded Frobenius character} $\Frob_q(V)$ is 
\[\Frob_q(V) = \sum\limits_{d \geq 0} q^d \Frob(V_d).\]

For the Young subgroup $S_\gamma = S_{\gamma_1} \times \cdots \times S_{\gamma_l}\subset S_n$ corresponding to $\gamma\models n$, we define $N_{\gamma} = \sum_{\sigma\in S_\gamma} \sgn(\sigma) \sigma$ to be the antisymmetrizer with respect to $\gamma$. 
For any $\mathbb{C}S_n$-module $V$, the vector space $N_\gamma V$ is the subspace of elements of $V$ that are antisymmetric with respect to $S_\gamma$.
Let $\mathcal{E}\uparrow^{S_n}_{S_\gamma}$ denote the induction of the sign representation $\mathcal{E}$ of $S_\gamma$ to $S_n$. It is well known that $\Frob(\mathcal{E}\uparrow^{S_n}_{S_\gamma}) = e_\gamma$. Using Frobenius reciprocity (see \cite[Chapter 3.3]{Fulton-Harris}), we get the following result:
\begin{prop}
    For any $\mathbb{C}S_n$-module $V$ and Young subgroup $S_\gamma \subset S_n$, we have  $$\dim(N_\gamma V) = \langle e_\gamma, \Frob(V)\rangle.$$
\end{prop}
The analogous statement for graded modules holds as well:
\begin{prop}\label{prop: dim and antisym}
    For any graded $\mathbb{C}S_n$-module $V = \oplus_{d\geq0} V_d$ and Young subgroup $S_\gamma \subset S_n$, we have  $$\Hilbq(N_\gamma V) = \langle e_\gamma, \Frobq(V)\rangle.$$
\end{prop}

\subsection{Cocharge and Charge}\label{subsection:charge}
The \textit{transformed Hall-Littlewood polynomials} $H_\mu[X;q]$ are defined to be
\[H_\mu[X;q] = \sum K_{\lambda,\mu}(q) s_\lambda\]
where $K_{\lambda,\mu}(q)$ is the $q$-Kostka polynomial (see {\cite[Chapter III.6]{macdonald}} for definition). 
There is a combinatorial formula for $K_{\lambda,\mu}(q)$ due to Lascoux-Sch\"{u}tzenberger \cite{LS1} using a statistic defined on semistandard Young tableaux called \textit{charge}:
\[K_{\lambda,\mu}(q) = \sum\limits_{T\in \ssyt(\lambda,\mu)}  q^{\text{charge}(T)}.\] 
The sum is over all semistandard tableaux of shape $\lambda$ and weight $\mu$. In particular, $K_{\lambda,\mu}(1) = K_{\lambda,\mu}$.

The \textit{modified Hall-Littlewood polynomials} $\mHL_\mu [X;q]$ are defined by:
\[\mHL_\mu [X;q] = q^{n(\mu)} H_\mu[X;q^{-1}]\]
where $n(\mu) = \sum (i-1)\mu_i$.
Then we can write
\[ \tilde{H}_{\mu}[X;q] = \sum\limits_{\lambda}  \tilde{K}_{\lambda,\mu}(q) s_\lambda(X)\] when $\tilde{K}_{\lambda,\mu}(q): = q^{n(\mu)} K_{\lambda,\mu}(q^{-1})$ is the modified $q$-Kostka polynomial.
The polynomial $\tilde{K}_{\lambda,\mu}(q)$ can be expressed by the statistic \textit{cocharge} :\[\tilde{K}_{\lambda,\mu}(q) = \sum\limits_{T\in \ssyt(\lambda,\mu)}  q^{\text{cocharge}(T)}\]
where for a semistandard tableau of weight $\mu$ we have $\text{cocharge}(T) = n(\mu) - \text{charge}(T).$

We now define charge and cocharge on permutations.
For $w\in S_n$, we define $\text{charge}(w) = \text{maj}(\rev(w^{-1}))$ and $\text{cocharge}(w) = \binom{n}{2} - \text{charge}(w).$
Explicitly, we can compute $\text{cocharge}$ of $w$ by assigning labels to each letter in $w$.
The \textit{cocharge word} of $w$ (denoted $\cc(w)$) is a word of length $n$ that gives us the cocharge labelling of $w$, which we define in the following way. 
We label $1$ of $w$ with 0. We proceed by reading the numbers in increasing order: if we label $i$ with a $k$, then we label $(i+1)$ with a $k$ if it is to the right of $i$. We label $(i+1)$ with a $(k+1)$ if it is to the left of $i$.
The statistic $\textit{cocharge}(w)$  is the sum of the letters in $\cc(w)$. 

\begin{ex}\label{ex: cocharge word}
 Let $w = 3 \ \ 5 \ \ 1 \ \ 6\ \ 2 \ \ 4 \ \ 7 .$  
The corresponding cocharge word is $\cc(w)  = 1  \ \ 2 \ \ 0 \ \ 2 \ \ 0 \ \ 1 \ \ 2$, hence $\text{cocharge}(w) = 1 + 2 + 0 + 2 + 0 + 1 + 2 = 8.$
\end{ex}

We have the following classification of cocharge words of permutations.

\begin{prop}\label{prop: classification of cocharge words}
        Let $z$ be a word of nonnegative integers of length $n$ containing a 0.
        We have $z = \cc(w)$ for some $w\in S_n$ if and only if for all $i\in \{1,2,\dots,  n\}$, (at least) one of the following holds:
    \begin{enumerate}
        \item There exists $i'>i $ such that  $z_i =  z_{i'}$,
        \item There exists $i'<i $ such that  $z_i  + 1=  z_{i'}$ ,
         \item The entry $z_i$ is the maximal entry in $z$.
    \end{enumerate}

\end{prop}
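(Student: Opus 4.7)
The plan is to prove both implications by exploiting the recursive structure of the cocharge labeling. Writing $c_j$ for the cocharge label assigned to the letter $j$, so that $z_i = c_{w_i}$, the sequence $c_1, c_2, \ldots, c_n$ satisfies $c_1 = 0$ and $c_{j+1} \in \{c_j, c_j + 1\}$, with $c_{j+1} = c_j$ iff $j+1$ is to the right of $j$ in $w$. Consequently, for every label value $k$ that appears, the set $\{j : c_j = k\}$ is an interval $[a_k, b_k]$ of consecutive integers, and the letters $a_k, a_k+1, \ldots, b_k$ occur in $w$ (equivalently, their positions in $z$) in left-to-right order.

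For the forward direction, fix $i$ and let $k = z_i$, so $w_i$ lies in the label class $[a_k, b_k]$. If $w_i < b_k$, then $w_i + 1$ has label $k$ and sits to the right of $w_i$ in $w$, yielding an $i' > i$ with $z_{i'} = z_i$, which is condition (1). If $w_i = b_k$ and $b_k = n$, then no letter has a label exceeding $k$, so no entry of $z$ exceeds $z_i$, giving condition (3). Finally, if $w_i = b_k$ and $b_k < n$, then $c_{b_k+1} = k+1$ and $b_k+1$ lies to the left of $b_k$ in $w$; its position provides an $i' < i$ with $z_{i'} = z_i + 1$, which is condition (2).

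For the reverse direction, I would first observe that the set of values appearing in a word $z$ satisfying the three conditions is an initial segment $\{0, 1, \ldots, K\}$. Indeed, if $v$ appears in $z$ but $v+1$ does not, then at the rightmost position $p$ with $z_p = v$, condition (1) fails by maximality and condition (2) fails because $v+1$ is absent, forcing condition (3); hence no value exceeds $v$, ruling out gaps. I then construct $w$ as follows: letting $m_k$ be the multiplicity of $k$ in $z$, set $b_k = m_0 + \cdots + m_k$ and $a_k = b_{k-1}+1$, and place the integers $a_k, a_k+1, \ldots, b_k$ in the positions of $z$ with value $k$, increasing left-to-right. It remains to verify $\cc(w) = z$ by induction on $j$. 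The base case $c_1 = 0$ is immediate since $1$ sits at a position with $z$-value $0$. If $j$ and $j+1$ are in the same class (so $z$-values agree), the construction places $j+1$ to the right of $j$, preserving the label. The crucial step is when $j+1$ begins a new class $k+1$: here $j$ is the maximum of class $k$, hence occupies the rightmost position $p$ with $z_p = k$. At $p$, condition (1) fails by maximality and condition (3) fails because $k+1$ appears in $z$; so condition (2) supplies a position $p' < p$ with $z_{p'} = k+1$, and since $j+1$ is placed at the leftmost such position, it lands to the left of $p$, producing the required increment of the label.

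The main obstacle is the inductive step in the reverse direction, and it is exactly what dictates the form of the three conditions: (1) and (3) identify the scenarios in which the label need not increment, while (2) is invoked precisely to enforce the "move left" phenomenon when the label does increment. The three alternatives are therefore tight, each one being used in a distinct case of the construction.
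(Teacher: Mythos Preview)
Your proof is correct and follows essentially the same approach as the paper: the construction of $w$ in the reverse direction is identical (placing the integers $a_k,\ldots,b_k$ at the positions with value $k$ in left-to-right order), and the forward direction uses the same case split according to the relative position of $w_i+1$. The main difference is that you carry out in full the verification that $\cc(w)=z$, which the paper leaves as ``a simple exercise''; in particular, your use of condition~(2) at the rightmost occurrence of each value $k$ to force the leftmost $(k{+}1)$-position to lie to its left is exactly the point that makes the construction work, and is worth making explicit.
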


\begin{proof}
    If $z = \cc(w)$, one of the three conditions will hold for any index $1\leq i \leq n$.
    The first (resp. second) case corresponds to when $w_i + 1$ appears to the right (resp. left) of $w_i$. 
   The last case corresponds to when $w_i$ has the highest cocharge value in the word.
   
   If we have such $z$ we can easily recover $w\in S_n$ such that $\cc(w) = z$. 
    For any nonnegative integer $k$, let $c_k$ denote the number of times $k$ appears in $z$ and $\{k_1<k_2<\cdots<k_{c_k}\} := \{i \ | \ z_i = k\}$. From condition (2), if $c_m >0$ but $c_{m+1} = 0$ for some nonnegative integer $m$, it follows that $z$ consists of $\{0,1,\dots, m\}$ and each of these letters appears at least once.

    We construct $w$ so that for each $k$ such that $c_k\neq 0$, the subword $w_{k_1}\dots w_{k_{c_k}}$ is the increasing sequence \[(c_0+\cdots+ c_{k-1} + 1)(c_0+\cdots+ c_{k-1} + 2)\cdots (c_0+\cdots+ c_{k-1} + c_k).\] 
    
    The resulting word $w$ is a permutation by construction. It is a simple exercise to check that $\cc(w) = z$.
    \end{proof}

\begin{lemma}\label{lemma: adding a repeated cocharge value is still a cc word}
Let $w\in S_n$, $x$ be a positive integer, and $1\leq k\leq n$. 
\begin{enumerate}[label=(\roman*)]
    \item Let $x = \cc(w)_i$ for some $i$. Then, any word $z$ we can get by inserting $x$ into $\cc(w)$ is a cocharge word of length $n+1$.
    \item Consider $m$ such that $\cc(w)_m$ is the largest, rightmost entry of $\cc(w)$. If $x = \cc(w)_m + 1$, any word we get by inserting $x$ to the left of position $m$ in $\cc(w)$ is a cocharge word of length $n+1$.
\end{enumerate}
\end{lemma}
\begin{proof}
Both cases follow from Proposition \ref{prop: classification of cocharge words}.
We know each entry in  $\cc(w)$ satisfies the condition in Proposition \ref{prop: classification of cocharge words}. We can also see that inserting a new letter into this word does not affect whether preexisting entries satisfy conditions (1) or (2), since it does not change the relative order of those entries within $z$. 
Hence it suffices to check that the new letter $x$ and the letters in $\cc(w)$ that only satisfied (3) still satisfy one of the three conditions when looking at $z$.
 Consider $m$ such that the letter $\cc(w)_m$ only satisfies (3). There is only one such letter: the largest, rightmost entry in $\cc(w)$. 

 For (i),  if $x = \cc(w)_i$ for some $i$, it follows that  $x \leq \cc(w)_m$, thus $\cc(w)_m$ is still the largest value in the word. Hence $\cc(w)_m$ satisfies (1) (if $x = \cc(w)_m$ and $x$ was inserted to the left of position $m$ in $\cc(w)$)  or (3) (otherwise). We can also see that $x$ clearly satisfies either (1) or (2).

 For (ii), we know $x = \cc(w)_m+1$. Hence $\cc(w)_m$ satisfies (2) and $x$ satisfies (3).   
\end{proof}

We define the \textit{charge} word $\cw(w)$ of permutation $w$ to be the following: 
\begin{align}\label{eq: charge is reverse of cocharge of reverse}
    \cw(w)  = \rev(\cc(\rev(w))).
\end{align}

We can see that $\cw(w)$ can be computed by the same algorithm we use to compute the cocharge word, except with left and right interchanged. The statistic $\textit{charge}(w)$  is the sum of the letters in $\cw(w)$. 

Lascoux-Sch\"{u}tzenberger \cite{LS1}  proved that charge is the unique statistic on words that satisfies a set of properties, one of them being the following:

\begin{thm}[Lascoux-Sch\"{u}tzenberger \cite{LS1}]
    If $w,w' \in S_n$ are such that $P(w) = P(w')$, then $\text{charge}(w) = \text{charge}(w')$.
\end{thm}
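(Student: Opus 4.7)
The plan is to reduce to the classical fact (due to Knuth) that $P(w)=P(w')$ if and only if $w$ and $w'$ are connected by a sequence of \emph{elementary Knuth transformations}, that is, local replacements in a word of $bac$ by $bca$ or of $acb$ by $cab$, where $a<b<c$. Granted this reduction, it suffices to show that charge is invariant under a single elementary Knuth move. Since $\mathrm{charge}(w)=\binom{n}{2}-\mathrm{cocharge}(w)$ and $n$ is preserved, I would equivalently show that \emph{cocharge} is preserved by each elementary Knuth move.

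To verify this, I would work directly with the labeling algorithm from Section \ref{subsection:charge}, which assigns the label of $k$ based solely on whether $k$ lies to the right or to the left of $k-1$ in the word. Let $w'$ be obtained from $w$ by applying an elementary Knuth move to three consecutive positions containing the values $\{a,b,c\}$ with $a<b<c$. Because the relative order in $w$ of any value $k \notin \{a,b,c\}$ with any other such value is unchanged, the only labels that can possibly differ between $\cc(w)$ and $\cc(w')$ are those assigned to values in
\[
\{a,a+1,b,b+1,c,c+1\}\cap\{1,\dots,n\},
\]
since only such values test their position against one of $a,b,c$ during the labeling. I would then perform a case analysis on the move ($bac \leftrightarrow bca$ or $acb \leftrightarrow cab$), and within each case on where the neighbouring values $a-1,a+1,b-1,b+1,c-1,c+1$ lie in the word (inside or outside the triple, to its left or to its right). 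In every configuration, the change in label contributed by each of the at most six affected values can be recorded as $\pm 1$ or $0$, and one checks that these contributions cancel.

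The main obstacle is organising the bookkeeping when some of $a,b,c$ are consecutive integers, so that the labels being compared depend on one another (for instance, if $b=a+1$, the label of $b$ is determined by the position of $a$, which itself is one of the three letters being rearranged). One must also be careful with the boundary cases $a=1$ and $c=n$, where one of the neighbouring values does not exist. None of these cases is conceptually hard; the content of the argument is the locality of the charge labeling. Once invariance under a single Knuth move is established, iterating along a chain of moves from $w$ to $w'$, which exists by Knuth's theorem since $P(w)=P(w')$, completes the proof.
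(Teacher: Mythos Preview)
The paper does not give its own proof of this statement; it is quoted as a classical result of Lascoux--Sch\"utzenberger and simply cited to \cite{LS1}. So there is no proof in the paper to compare against, and I evaluate your proposal on its own merits.

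Your overall strategy---reduce via Knuth's theorem to elementary Knuth moves and verify invariance of cocharge under each move---is the standard one and is sound. But the execution you sketch has a gap and is more elaborate than necessary.

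The gap is in your locality claim. You assert that only the labels of values in $\{a,a+1,b,b+1,c,c+1\}$ can change, because ``only such values test their position against one of $a,b,c$ during the labeling.'' This overlooks that the cocharge label is \emph{cumulative}: writing $\epsilon_i=1$ if $i+1$ lies to the left of $i$ in $w$ and $\epsilon_i=0$ otherwise, one has $\mathrm{label}(k)=\sum_{i<k}\epsilon_i$. If any $\epsilon_i$ were to change, the label of \emph{every} $k>i$ would shift, not just the six values you list; your cancellation bookkeeping would then be incomplete.

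The observation that repairs the argument also makes it far shorter. In both moves $bac\leftrightarrow bca$ and $acb\leftrightarrow cab$ on a \emph{permutation}, only the letters $a$ and $c$ exchange positions; $b$ and everything outside the triple stay put. Hence the only pair of values whose relative order can change is $\{a,c\}$. But $a<b<c$ forces $c\ge a+2$, so $\{a,c\}$ is never a pair of consecutive integers, and therefore every $\epsilon_i$ is unchanged. Consequently the entire cocharge labeling (value by value) is preserved, not merely its sum. No case analysis on consecutive values, boundary cases $a=1$ or $c=n$, or $\pm 1$ cancellations is needed in the permutation setting; those complications only arise for words with repeated letters, which is not the situation here.
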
 
Note that the original statement is  for words with partition weight that are Knuth equivalent; for our purposes, it suffices to consider permutations.
From this, we can define charge (resp. cocharge) on standard Young tableau $T$ by defining it on words and then extending the definition to tableaux by setting $\text{charge}(T) = \text{charge}(\text{rw}(T))$ (resp. $\text{cocharge}(T) = \text{cocharge}(\text{rw}(T))$).

One observation we can make from Proposition \ref{thm:RSK rev} and \eqref{eq: charge is reverse of cocharge of reverse} is the following:

\begin{prop}\label{thm: charge cocharge trabspose}
    If $T$ is a standard tableau, then
    $\text{charge}(T) = \text{cocharge}(T^t)$.
\end{prop}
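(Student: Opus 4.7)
The plan is to derive the identity by linking four facts: (i) the definition of the charge word as the reverse of the cocharge word of the reversed permutation, (ii) the identity $\text{cocharge}(w) = \binom{n}{2} - \text{charge}(w)$ which forces cocharge to be invariant on any set on which charge is invariant, (iii) the Lascoux--Sch\"utzenberger invariance of charge on $P$-equivalence classes, and (iv) the RSK reversal identity $P(\rev(w)) = P(w)^t$ of Proposition \ref{thm:RSK rev}.

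First I would verify the permutation-level identity $\text{charge}(w) = \text{cocharge}(\rev(w))$ for any $w \in S_n$. This is almost tautological from \eqref{eq: charge is reverse of cocharge of reverse}: the charge word of $w$ is obtained by reversing the cocharge word of $\rev(w)$, and reversing a word does not change the sum of its entries, so summing gives $\text{charge}(w) = \text{cocharge}(\rev(w))$.

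Next I would specialize to $w = \rw(T)$, which satisfies $P(w) = T$ by the definition of reading word and RSK. By the invariance of charge on $P$-classes (the cited theorem of Lascoux--Sch\"utzenberger), $\text{charge}(T) = \text{charge}(\rw(T))$. Applying step one gives
\[
\text{charge}(T) \;=\; \text{charge}(\rw(T)) \;=\; \text{cocharge}(\rev(\rw(T))).
\]
Now I would use Proposition \ref{thm:RSK rev}: $P(\rev(\rw(T))) = P(\rw(T))^t = T^t$. Since $\rw(T^t)$ also inserts to $T^t$, the two permutations $\rev(\rw(T))$ and $\rw(T^t)$ lie in the same RSK $P$-class, so by the charge invariance (and hence cocharge invariance, via $\text{cocharge} = \binom{n}{2} - \text{charge}$) we get
\[
\text{cocharge}(\rev(\rw(T))) \;=\; \text{cocharge}(\rw(T^t)) \;=\; \text{cocharge}(T^t).
\]
Chaining the equalities yields $\text{charge}(T) = \text{cocharge}(T^t)$, as desired.

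There is no real obstacle here; the only care needed is to track the distinction between charge defined on permutations (via the cocharge-word formula) and charge defined on tableaux (via reading words), and to invoke $P$-class invariance at the single point where one switches the representative from $\rev(\rw(T))$ to $\rw(T^t)$.
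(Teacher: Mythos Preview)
Your proof is correct and follows essentially the same route the paper indicates: the paper states this proposition as an immediate observation from Proposition~\ref{thm:RSK rev} and \eqref{eq: charge is reverse of cocharge of reverse}, and your argument is precisely the natural unpacking of that sketch, including the (implicit but necessary) appeal to the Lascoux--Sch\"utzenberger invariance to pass from $\rev(\rw(T))$ to $\rw(T^t)$.
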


To each $w \in S_n$, we associate a monomial $\mathbf{x}^{\cw(w)}$, where the exponents are given by the charge word of $w$. We refer to this monomial as the \textit{charge monomial }of $w$. 
\begin{ex}
Consider $w' = 7 \ \ 4 \ \ 2 \ \ 6\ \ 1 \ \ 5 \ \ 3 .$  Note that $w' = \rev(w)$ from Example \ref{ex: cocharge word}.
From \eqref{eq: charge is reverse of cocharge of reverse} , we know 
$\cw(w') = \rev(\cc(w)) = 2 \ \ 1 \ \ 0 \ \ 2 \ \ 0 \ \ 2 \ \ 1 $. Thus the corresponding charge monomial is $\mathbf{x}^{\cw(w')} = x_1^2 x_2 x_4^2x_6^2x_7.$
\end{ex}

\subsection{Catabolism and catabolizability}\label{section:cat}
Let $T$ be a standard tableau with $\text{rw}(T) = w w'$ where $w'$ is the first row of $T$. \textit{Catabolism} is defined to be the operation that takes $T = P(w w')$ to $K(T) := P(w' w)$. It is known that $\text{cocharge}(K(T)) = \text{cocharge}(T)- (|\lambda| - \lambda_1)$ where $\lambda = \shape(T)$. Repeated applications of catabolism eventually produce a one row tableau with zero cocharge.

We can see that applying catabolism to $T$ is equivalent to sliding the first row of $T$ to the right until it detaches from the higher rows, and then swapping the two pieces and applying jeu-de-taquin (for a definition of jeu-de-taquin, see \cite[Chapter 7.A1.2]{EC2}) to the resulting skew shape until we get a partition shape tableau. This is illustrated in the example below.

\begin{ex}  \ytableausetup{smalltableaux, aligntableaux= center} Consider the following tableau of shape $\lambda = (3,2,1)$: \[T = \ \ \ytableaushort{6,25,134}.\]
We can see that $\text{rw}(T) = 625134$ where $w = 625$ and $w' = 134$.  Observe that \[K(T) = P(w'w) =  P(134625) =  \ytableaushort{36,1245}.\]
Alternatively, we can use jeu-de-taquin to see
\begin{center}
$T = \ \ $\ytableaushort{6,25,134} $\rightarrow$ \ytableaushort{134,\none \none \none 6,\none \none \none 25}  $\xrightarrow[\text{jeu-de-taquin}]{}$ \ytableaushort{36,1245}  $\ \  = K(T).$
\end{center}
Note that $\cc(T) = 8$ and $\cc(K(T)) = 5 = 8 - (6-3)$, where $|\lambda| = 6, \lambda_1 = 3$.
\end{ex}

We use catabolism to define catabolizability type. For a standard tableau $T$, let $d(T)$ be the largest integer $m$ such that the first row of $T$ contains $1,2,\dots, m$.
The \textit{catabolizability type} of $T$ (denoted $\ctype(T)$) is given by the sequence
\begin{align}
    \ctype(T) = (d(T), d(K(T))-d(T), \dots, d(K^{i}(T)) - d(K^{i-1}(T)),\dots).
\end{align}

\begin{ex}\label{ex:cat using direct def}
    Consider \ytableausetup{smalltableaux}
$T =\  $\ytableaushort{68,45,1237}. Since $d(T) = 3$, we have $\ctype(T)_1 = 3$.
The result of applying catabolism is \[K(T) = P(12376845) = \ytableaushort{7,68,12345}.\]
Since $d(K(T)) = 5$, we have $\ctype(T)_2 = d(K(T)) - d(T) = 2$.
Repeating this process, we can compute $\ctype(T) = (3,2,1,1,1)$.
\end{ex}

We can think of $\ctype(T)$ as encoding the sequence of catabolisms we use to go from $T$ to the one row tableau. We have the following result regarding this sequence.
\begin{prop}[Lascoux {\cite[Lemma 9.6]{Lascoux}}]
    For standard tableau $T$, the sequence $\ctype(T)$ is a partition.
\end{prop}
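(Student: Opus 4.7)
Set $d_i := d(K^{i-1}(T))$ for $i \geq 1$ and $d_0 := 0$, so that $\ctype(T) = (\mu_1, \mu_2, \ldots)$ with $\mu_i = d_i - d_{i-1}$. First I would verify that every part is positive: in any tableau with initial first-row segment exactly $1, 2, \ldots, d$, the entry $d+1$ must sit at position $(2,1)$ (no other legal placement keeps the shape a partition), and a short RSK analysis of $K(T) = P(w'w)$ shows this entry always ends up in the initial segment of row 1 after one catabolism step. Hence $d_i > d_{i-1}$ at every step before the one-row stage. The real content is then concavity: $\mu_i \geq \mu_{i+1}$ for all $i \geq 1$, equivalently $2 d_i \geq d_{i-1} + d_{i+1}$.

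The plan is to induct on the number $s(T)$ of catabolism steps needed to reach a one-row tableau; this is finite since each application of $K$ strictly decreases cocharge. The base case $s(T) = 0$ is immediate, with $\ctype(T) = (n)$. For the inductive step, apply the inductive hypothesis to $K(T)$: then $\ctype(K(T)) = (d_2, \mu_3, \mu_4, \ldots)$ is a partition, which hands us $\mu_i \geq \mu_{i+1}$ for all $i \geq 3$. What remains are the two \emph{boundary} inequalities $\mu_1 \geq \mu_2$ (i.e.\ $d_2 \leq 2 d_1$) and $\mu_2 \geq \mu_3$ (i.e.\ $d_3 \leq 2 d_2 - d_1$). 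Note that the inductive hypothesis yields only $\mu_1 + \mu_2 \geq \mu_3$, that is $d_3 \leq 2 d_2$, so the second boundary inequality is strictly stronger than what the induction gives for free, which is why a naive induction does not close the argument.

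The main obstacle is therefore to establish both boundary inequalities directly. For $\mu_1 \geq \mu_2$, I would analyze the RSK insertion $K(T) = P(w'w)$, with $w' = (1, 2, \ldots, d, a_1, \ldots, a_j)$ the first row of $T$ and $w$ the reading word of the upper rows. Since $1, \ldots, d(T)$ persist at the left of row 1 throughout the insertion, the cell $(2,1)$ of $K(T)$ holds the value $d(K(T)) + 1$ and is the first entry ever bumped out of row 1 that is not subsequently re-bumped. Using that $d+1$ appears in $w$ as the leading letter of the reading word of row 2 (all letters preceding it in $w$ are $\geq d+2$), a careful bumping analysis should yield the bound $d(K(T)) + 1 \leq 2 d(T) + 1$. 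For $\mu_2 \geq \mu_3$, I expect a refined version of this argument applied to $K(T)$, sharpening the bound on $d(K^2(T))$ so that it depends on $d(T)$ as well as $d(K(T))$; this is cleanest when phrased in terms of the jeu-de-taquin description of catabolism, which tracks entry movements across two successive catabolism steps.
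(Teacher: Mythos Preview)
The paper does not give its own proof of this proposition: it is quoted from Lascoux and used as a black box, so there is no in-paper argument to compare against. I will therefore assess your proposal on its own terms.

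Your induction scaffold is correct but essentially idle. Observe that $\ctype(K^{i-2}(T)) = (d_{i-1},\mu_i,\mu_{i+1},\dots)$, so if you can show $\ctype(S)_2 \geq \ctype(S)_3$ for every standard tableau $S$, then applying this with $S = K^{i-2}(T)$ already yields $\mu_i \geq \mu_{i+1}$ for every $i \geq 2$; together with $\mu_1 \geq \mu_2$ you are done with no induction. Conversely, the inductive hypothesis you invoke contributes only $d_2 \geq \mu_3$, which you yourself note is strictly weaker than the required $\mu_2 \geq \mu_3$. So all of the content lies in the two ``boundary'' inequalities, and the induction adds nothing.

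The first inequality $\mu_1 \geq \mu_2$ can indeed be proved along the line you sketch, and in fact cleanly: one checks that the entry at cell $(2,1)$ of $P(w'w)$ is the least element of $w'w$ that is not a right-to-left minimum, and that for $x>d$ this happens exactly when $x$ lies in row~$2$ of $T$. Hence $\mu_2$ is the number of consecutive entries $d+1,\dots,d+\mu_2$ occupying cells $(2,1),\dots,(2,\mu_2)$ of $T$, and the column inequality $(1,j) < (2,j) = d+j$ forces $\mu_2 \leq d = \mu_1$ by pigeonhole. Your bumping heuristic points in this direction but does not supply it.

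The genuine gap is the second inequality $\mu_2 \geq \mu_3$. Your proposal here is only ``I expect a refined version of this argument \dots\ tracking entry movements across two successive catabolism steps'', which is not an argument. Running the same analysis on $K(T)$ gives only $\mu_3 \leq d(K(T)) = \mu_1 + \mu_2$, not $\mu_3 \leq \mu_2$; to sharpen it you would need nontrivial control over the first $\mu_3$ cells of row~$1$ of $K(T)$ beyond the mere fact that they hold $1,\dots,\mu_3$, and you have not indicated how to obtain that. Until this step is actually carried out, the proof is incomplete.
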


An alternative way to view $\ctype(T)$ is using $m$-catabolizability (see  \cite{SW} for details).
Let $(m)$ denote the one row shape of length $m$. If $(m) \subset \shape (T)$ and the first row of $T$ contains the smallest $m$ letters in $T$, we define the \textit{$m$-catabolism} of $T$ to be deleting the first $m$ entries of the first row of $T$  and then proceeding as we did for standard catabolism, where we denote the resulting tableau $\text{Cat}_m(T)$. We say that $T$ of size $n$ is \textit{$\lambda$-catabolizable} for partition $\lambda = (\lambda_1, \lambda_2,\dots, \lambda_l)\vdash n$ if the first row of $T$ contains the smallest $\lambda_1$ many entries of $T$ and $\text{Cat}_{\lambda_1} (T)$ is $(\lambda_2,\dots, \lambda_l)$-catabolizable. 
Catabolizability type of a tableau and $\lambda$-catabolizability are related in the following way.

\begin{prop}[Shimozono-Weyman {\cite[Proposition 46]{SW}}]
    A standard tableau $T$ is $\lambda$-catabolizable if and only if $\ctype(T) \dom \lambda$.
\end{prop}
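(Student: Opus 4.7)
The plan is to proceed by induction on $n = |T|$. The inductive step will rest on a clean formula expressing $\ctype(\text{Cat}_m(T))$ in terms of $\ctype(T)$ whenever $m \leq d(T)$, namely
\[
\ctype(\text{Cat}_m(T)) \;=\; \bigl(\ctype(T)_1 + \ctype(T)_2 - m,\; \ctype(T)_3,\; \ctype(T)_4,\; \ldots\bigr).
\]
Once this is in hand, the claimed equivalence reduces to a routine comparison of partial sums.

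To prove the formula I will first show, for $m \leq d(T)$, that $\text{Cat}_m(T)$ is obtained from $K(T)$ by deleting the leftmost $m$ cells of row $1$, which necessarily hold the entries $1, 2, \ldots, m$ since $d(K(T)) \geq d(T) \geq m$. Writing $w$ for the reading word of the rows above row $1$ of $T$ and factoring the first-row word as $w' = 1\,2\cdots m \cdot w'''$, the algebraic descriptions $K(T) = P(w'w)$ and $\text{Cat}_m(T) = P(w'''w)$ reduce this to a standard insertion fact: prepending the factor $1\,2\cdots m$ to a word all of whose letters exceed $m$ has the sole effect of prepending $1, \ldots, m$ to the first row of the resulting insertion tableau, leaving the other rows undisturbed (the prefix letters can never bump any subsequent letter). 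This immediately gives $d(\text{Cat}_m(T)) = d(K(T)) - m$. A short induction on $j$ upgrades this to the identification $K^j(\text{Cat}_m(T)) = K^{j+1}(T)$ with the same $m$ cells of row $1$ deleted, so that $d(K^j(\text{Cat}_m(T))) = d(K^{j+1}(T)) - m$ for every $j \geq 0$; taking consecutive differences then delivers the formula for $\ctype(\text{Cat}_m(T))$.

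With the formula established, the main induction closes quickly. The base case $n \leq 1$ is trivial. Writing $\lambda = (\lambda_1, \hat{\lambda})$, the tableau $T$ is $\lambda$-catabolizable iff $d(T) \geq \lambda_1$ and $\text{Cat}_{\lambda_1}(T)$ is $\hat{\lambda}$-catabolizable; applying the inductive hypothesis to $\text{Cat}_{\lambda_1}(T)$ (which has size $n - \lambda_1$) reformulates this as the conjunction $\ctype(T)_1 \geq \lambda_1$ and $\ctype(\text{Cat}_{\lambda_1}(T)) \dom \hat{\lambda}$. Substituting the formula converts these into the inequalities $\ctype(T)_1 \geq \lambda_1$ and $\ctype(T)_1 + \cdots + \ctype(T)_{k+1} \geq \lambda_1 + \cdots + \lambda_{k+1}$ for all $k \geq 1$, which together are exactly the statement $\ctype(T) \dom \lambda$.

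The principal technical obstacle will be the structural comparison between $\text{Cat}_m$ and $K$: one must verify carefully that $\text{Cat}_m(T)$ is the straight-shape tableau obtained from $K(T)$ by removing the first $m$ cells of row $1$. The supporting insertion-prepending lemma is standard, but its verification requires checking that none of the prepended letters $1, 2, \ldots, m$ is ever bumped out of row $1$ during the subsequent insertions, which in turn relies on every remaining letter being strictly greater than $m$. Once this step is in place, the rest of the argument is routine bookkeeping on partial sums.
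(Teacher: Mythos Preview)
The paper does not prove this proposition itself; it is quoted from Shimozono--Weyman without argument, so there is no in-paper proof to compare against.

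Your proposal is correct. The key step---that for $m\le d(T)$ the tableau $\text{Cat}_m(T)$ is obtained from $K(T)$ by deleting the entries $1,\ldots,m$ from row~1---holds for exactly the reason you give: inserting $1,2,\ldots,m$ first places them at the left end of row~1, and every subsequent letter $x>m$ can never bump any of them, so the remainder of the insertion proceeds identically to that producing $P(w'''w)$. Iterating yields $d(K^j(\text{Cat}_m(T)))=d(K^{j+1}(T))-m$ for all $j\ge 0$, and hence the formula $\ctype(\text{Cat}_m(T))=(\ctype(T)_1+\ctype(T)_2-m,\,\ctype(T)_3,\,\ldots)$. The induction on $|T|$ then reduces both directions of the equivalence to the chain of partial-sum inequalities defining dominance, exactly as you outline. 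One small point worth making explicit in a polished write-up: $\text{Cat}_{\lambda_1}(T)$ has entries $\lambda_1+1,\ldots,n$ rather than $1,\ldots,n-\lambda_1$, so to invoke the inductive hypothesis one should either standardize or remark that all the relevant notions ($d$, $K$, $\ctype$, and $\lambda$-catabolizability) are insensitive to this relabeling.
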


From this, we can compute $\ctype(T)$ by repeatedly applying maximal $m$-catabolisms,  where at each step we have \[\ctype(T)_i = d(\text{Cat}_{\ctype(T)_{i-1}}\cdots \text{Cat}_{\ctype(T)_{1}}(T)).\] This simplifies our process, since at each step we are dealing with smaller tableaux.

\begin{ex}
    Consider \ytableausetup{smalltableaux}
$T = $\ytableaushort{68,45,1237}.
We know $d(T) = 3$ and can compute $\text{Cat}_3(T)$:
\begin{align*}
     \ytableaushort{68,45,1237} \to  \ytableaushort{7,\none  68, \none 45} \xrightarrow[\text{jeu-de-taquin}]{}  \ytableaushort{7,68,45} = \text{Cat}_3(T).
\end{align*}
We see that $\ctype(T)_2 = d(\text{Cat}_3(T)) = 2$. 
Repeating this process, we can compute $\ctype(T) = (3,2,1,1,1)$, which is the same as what we get in Example \ref{ex:cat using direct def}. 
\end{ex}

In general, many things are unknown about catabolism and catabolizability, though the operation of catabolism itself is very easy to compute.
However, there is a concrete algorithm, due to Blasiak \cite{Blasiak}, called the \textit{catabolism insertion algorithm}, which computes the catabolizability type of a permutation $w$, where we define $\ctype(w) : = \ctype(P(w))$.

\begin{alg}[Blasiak {\cite[Algorithm 3.2]{Blasiak}}]\label{alg: cat insertion}
We define a function $f$ on pairs $(x,\nu)$, where $x = ya$ is a word ($a$ is the last letter of $x$) and $\nu$ is a partition, to be
\begin{align*}
    f(x,\nu) &= \begin{cases}
        (y,\nu + \epsilon_{a+1}) & \text{if }\nu + \epsilon_{a+1} \text{ is a partition,}\\ 
        ((a+1)y,\nu) & \text{otherwise.}
    \end{cases}
\end{align*}
where $\epsilon_{a+1} $ is the composition $(0,\dots,0, 1,0,\dots)$ with 1 in the $(a+1)$th coordinate and $\nu + \epsilon_{a+1})$ is the component wise sum of the two compositions.
 
Let $w$ be a permutation of length $n$ with cocharge word $\cc(w)$. We apply $f$ to $(\cc(w),\emptyset)$ repeatedly until we get $(\emptyset, \mu)$, where $\mu \vdash n$.
\end{alg}
We introduce terminology regarding this algorithm.
When we refer to the $i$th letter of the input word, we also count the letters that we have deleted from the word. Hence the input word is always of length $n$, but potentially with empty spots.
At each step of the algorithm, when the input is $(ya,\nu)$, we say we \textit{read} $i$ if the letter $a$ corresponds to the $i$th letter in the original input word $\cc(w)_i$.

\begin{prop}[Blasiak {\cite{Blasiak}}]
   When applying Algorithm \ref{alg: cat insertion} to $(\cc(w),\emptyset),$ the resulting partition $\mu $ is $\ctype(w)$.
\end{prop}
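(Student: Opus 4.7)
The approach is to match the algorithm's output $\mu$ with the partition produced by iterated $m$-catabolism of $P(w)$, using the recursion $\mu_i = d(\text{Cat}_{\mu_{i-1}} \cdots \text{Cat}_{\mu_1}(P(w)))$ supplied by the Shimozono--Weyman characterization. I would proceed by induction on $n = |w|$, showing that the algorithm absorbs one batch of letters corresponding to each $m$-catabolism step, leaving behind a residual word that is (after justification) the cocharge word of a permutation whose insertion tableau is the catabolized tableau.

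For the base step I would verify $\mu_1 = d(P(w))$. Since promotion only strictly increases letters, no $0$ is ever created, and any existing $0$ successfully places a box in row $1$ of $\nu$ (the condition $\nu + \epsilon_1$ is a partition always holds); hence $\mu_1$ equals the number of $0$'s in $\cc(w)$. The definition of the cocharge labeling gives $\cc(w)_i = 0$ if and only if $w_i$ belongs to the initial consecutive run $1, 2, \ldots, k$ appearing in left-to-right order in $w$, and a standard RSK argument identifies this $k$ with $d(P(w))$: the values $1, 2, \ldots, k$ are inserted in left-to-right order and so end up consecutively in row $1$, while $k+1$, being to the left of $k$, forces a bump that keeps it out of row $1$.

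The inductive step is the main obstacle. After the algorithm has read the final original $0$ (possibly interspersed with promotions on larger letters), one must identify the residual word $x'$ as $\cc(\tilde{w})$ for some permutation $\tilde{w}$ with $P(\tilde{w})$ equal, after the standard relabeling $\{d(P(w))+1, \ldots, n\} \to \{1, \ldots, n - d(P(w))\}$, to $\text{Cat}_{d(P(w))}(P(w))$. The promotion step is the source of difficulty: letters originally equal to $a$ may appear in $x'$ as $a+1, a+2, \ldots$, so $x'$ is only cocharge-like a priori. The plan is to show that the cumulative effect of promotions exactly mirrors the jeu-de-taquin inside $\text{Cat}_{d(P(w))}$: each forbidden addition to $\nu_{a+1}$ corresponds to a cell of $P(w)$ that must slide strictly upward during the catabolism. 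I would track letters of $\cc(w)$ through the cells they index via the recording tableau $Q(w)$, using Proposition~\ref{prop: classification of cocharge words} and Lemma~\ref{lemma: adding a repeated cocharge value is still a cc word} to verify inductively that $x'$ remains a valid cocharge word at each intermediate stage. Once $x' = \cc(\tilde{w})$ is established, the inductive hypothesis gives $(\mu_2, \mu_3, \ldots) = \ctype(P(\tilde{w}))$, which matches the tail of $\ctype(P(w))$ by the catabolism recursion, completing the induction.
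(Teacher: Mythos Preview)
The paper does not supply its own proof of this proposition: it is stated with attribution to Blasiak and no argument follows. So there is nothing in the paper to compare your proposal against, and the question is simply whether your sketch would stand on its own.

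Your base case is fine. The zeros in $\cc(w)$ are exactly the labels on the values $1,2,\dots,m$ where $m$ is the largest integer with $1,\dots,m$ appearing left to right in $w$, and the standard characterization of the first row of an insertion tableau (the $j$th entry is the least value ending an increasing subsequence of length $j$) gives $m = d(P(w))$.

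The inductive step, however, has a concrete gap that your proposal does not address. You describe the state after the last $0$ is consumed as a residual word $x'$ together with the single row $(\mu_1)$, with the intervening steps on larger letters consisting only of \emph{promotions}. But larger letters can also be \emph{successfully inserted} into rows $2,3,\dots$ before the leftmost $0$ is reached. For instance, take $w = 3142$ with $\cc(w) = 1\,0\,1\,0$: reading right to left, the algorithm consumes the $0$ at position $4$, then the $1$ at position $3$ (into row $2$), then the $0$ at position $2$; at that moment $\nu = (2,1)$ and the residual consists of the single letter at position $1$. After shifting down by one, this residual has length $1$, whereas $\text{Cat}_{2}(P(w))$ has two boxes. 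So $x'$ cannot be $\cc(\tilde w)$ for any $\tilde w$ whose insertion tableau is the relabeled catabolized tableau, and the inductive hypothesis cannot be invoked as stated. Any correct induction must account simultaneously for the residual word \emph{and} the boxes already sitting in rows $\geq 2$ of $\nu$; reconciling that partial state with a fresh run of the algorithm on the catabolized tableau is exactly the substance of the result, and your proposal does not supply it.
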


As a slight modification to the original algorithm introduced in \cite{Blasiak}, we record the entries of $\cc(w)$ that correspond to the boxes in $\mu$ as we build the partition. When reading $i$ results in adding a box to the partition $\nu$, we fill the new box with $i$. This gives us a standard filling (but not a SYT) of shape $\ctype(w)$, meaning $\{1,\dots, n\}$ appear exactly once. We denote this filling as $T_w$. For the modified algorithm, the final tuple is $(\emptyset, T_w)$.

If the initial word $z$ is not a cocharge word, this algorithm may not terminate. For example: if $z = 02$, the second letter will never be deleted, since we cannot add a box to the partition $(1)$ in rows $2$ or higher.

\begin{ex}\label{ex: cat alg}
    Consider \begin{align*}
        w &=  6 \ 3 \ 4  \ 1 \ 2\ 5  \\ 
        \cc(w) &=  2 \ 1 \ 1 \ 0 \ 0 \ 1 .
    \end{align*}
    We will apply $f$ to $(\cc(w),\emptyset)$ repeatedly until we get an empty word in the first coordinate.
In order to keep track of the indices, we do not rotate $\cc(w)$: instead, we read the word from right to left. The position we are reading at each step of the algorithm is underlined. 
     \ytableausetup{smalltableaux, centertableaux}
     {\allowdisplaybreaks\begin{align*}
          ( 2 \ 1 \ 1 \ 0 \ 0 \ \mathbf{\underline{1}} \hspace{.1cm}&, \hspace{.3cm} \emptyset) \\ &\downarrow\\ 
     ( 2 \ 1 \ 1 \ 0 \ \mathbf{\underline{0}} \ 2\hspace{.1cm} &, \hspace{.3cm}\emptyset \hspace{.1cm}) \\ &\downarrow\\ 
     ( 2 \ 1 \ 1 \  \mathbf{\underline{0}} \ \hspace{.2cm} \ 2 \hspace{.1cm}&, \hspace{.3cm} \ytableaushort{5} \hspace{.1cm}) 
      \\ &\downarrow\\ 
     ( 2 \ 1 \ \mathbf{\underline{1}} \  \hspace{.2cm} \ \hspace{.2cm} \ 2 \hspace{.1cm}&,\hspace{.1cm} \ytableaushort{54}\hspace{.1cm}) \\ &\downarrow\\ 
     ( 2 \ \mathbf{\underline{1}} \  \hspace{.2cm}  \  \hspace{.2cm} \ \hspace{.2cm} \ 2\hspace{.1cm} &, \hspace{.3cm}\ytableaushort{3,54}\hspace{.1cm}) 
      \\ &\downarrow\\ 
     ( \mathbf{\underline{2}} \ \hspace{.2cm} \  \hspace{.2cm}  \  \hspace{.2cm} \ \hspace{.2cm} \ 2 \hspace{.1cm}&, \hspace{.3cm}\ytableaushort{32,54}\hspace{.1cm}) 
     \\ &\downarrow\\ 
     ( \hspace{.2cm}\ \hspace{.2cm} \  \hspace{.2cm}  \  \hspace{.2cm} \ \hspace{.2cm} \ \mathbf{\underline{2}}  \hspace{.1cm}&,\hspace{.3cm} \ytableaushort{1,32,54}\hspace{.1cm}) 
 \\ &\downarrow\\ 
     ( \hspace{.2cm}\ \hspace{.2cm} \  \hspace{.2cm}  \  \hspace{.2cm} \ \hspace{.2cm} \ \hspace{.2cm} \hspace{.1cm} &, \hspace{.3cm}\ytableaushort{16,32,54}\hspace{.1cm}) 
    \end{align*}}
    From this, we conclude $T_{634125} = \ytableaushort{16,32,54}$ and $\ctype(634125) = (2,2,2)$.
\end{ex}
We now state some nice properties of the filling $T_w$. 

\begin{lemma}\label{lemma:row is sum of cocharge label and mult}
    If we add $i$ to row $r$ of $T_w$ when reading $i$ for the $k$th time, we have
    \[k + cc(w)_i = r.\]
\end{lemma}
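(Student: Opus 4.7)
The plan is to unpack the algorithm directly and track what happens to the ``value'' at position $i$ across successive reads. The key observation is that position $i$ starts with the letter $\cc(w)_i$, and each time we attempt to process $i$, the function $f$ either consumes $i$ (the first case) or replaces the letter $a$ at position $i$ with $a+1$ and pushes it back into the word (the second case). So between two consecutive reads of $i$, the letter at position $i$ increases by exactly one.

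The proof would then proceed by straight induction on $k$, the number of reads of $i$. The base case $k=1$ says: if $i$ is added to the partition on its first read, then the letter at position $i$ is still $\cc(w)_i$, and by the definition of $f$ the new box lies in row $\cc(w)_i + 1$, which matches $k + \cc(w)_i$. For the inductive step, suppose that $i$ has already been read $k-1$ times without being added; by the inductive observation above, after these $k-1$ unsuccessful reads the letter sitting at position $i$ is $\cc(w)_i + (k-1)$. On the $k$th read we are in the first (box-adding) branch of $f$, so the box is placed in row $(\cc(w)_i + (k-1)) + 1 = \cc(w)_i + k$, as desired.

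The only step that requires any care is verifying that each failed read really does increment the letter at position $i$ by exactly one and leaves that letter at the same position within the evolving word (relative to the remaining letters). This is immediate from the second branch of $f$ in Algorithm \ref{alg: cat insertion}, where $(ya,\nu)$ is replaced by $((a+1)y,\nu)$; since the letters we've already consumed stay consumed, the underlined position continues to correspond to index $i$ in the original word $\cc(w)$, just with its value bumped up by one. No catabolism-specific combinatorics is needed beyond this bookkeeping, so there is no substantive obstacle; the lemma is essentially a direct translation of the definition of $f$.
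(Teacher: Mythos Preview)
Your proposal is correct and follows essentially the same approach as the paper: both arguments observe that each failed read of position $i$ increments its current letter by $1$, so after $k-1$ failed reads the letter is $\cc(w)_i + (k-1)$, and the successful $k$th read places the box in row $(\cc(w)_i + (k-1)) + 1 = \cc(w)_i + k$. The paper states this directly rather than framing it as an induction, but the content is identical.
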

\begin{proof}
    Each time we read $i$ without adding it to $T_w$, we increase the $i$th letter of the input word by $1$. 
    After reading through the word $(k-1)$ times without adding $i$ to the filling, the $i$th letter in the input word is $\cc(w)_i + (k-1)$.
    Since we add $i$ the next time we read the $i$th letter, we know we add it to the $(\cc(w)_i + (k-1)+1)$th row, where the last $1$ is coming from the fact that $f(x,\nu) = (y,\nu +\epsilon_{a+1})$. 
\end{proof}

We now look at subwords of $\cc(w)$ determined by the columns of $T_w$. Consider $w\in S_n$ such that $\ctype(w) = \mu$.
For $1\leq j \leq \mu_1$ and $1\leq r \leq \mu'_j$, let \[\{i \ | \ i \text{ is in row } r', \text{ column } j \text{ of } T_w, \ 1\leq r' \leq r\} = \{j^{(r)}_1 < j^{(r)}_2 < \cdots < j^{(r)}_r\}.\]

We are numbering the entries in column $j$ of $T_w$ in increasing order, though that order may not match the order in which they appear in the column. 

We define the subword $\cc(w)^{(j,r)}$ of $\cc(w)$ to be \[\cc(w)^{(j,r)} = \cc(w)_{j^{(r)}_1}\cc(w)_{j^{(r)}_2}\dots \cc(w)_{j^{(r)}_r}.\]
In this case, $\cc(w)^{(j,r)}$ is the subword corresponding to the first $r$ rows of column $j$ in $T_w$. We write $\cc(w)^{(j)} = \cc(w)^{(j,\mu_j)}$ when $r$ is equal to the height of column $j$.

\begin{ex}
   Using $w=  634125 $ from Example \ref{ex: cat alg}, we can see that \[\cc(w)^{(1,3)} = \cc(w)_1\cc(w)_3\cc(w)_5 = 210.\]
We can also see that $j^{(3)}_1 = 1$ but $1$ is not in the first row of $T_w$. 
\end{ex}

\begin{prop}\label{prop: subword of column is a cocharge word}
For $w\in S_n$ such that $\ctype(w) = \mu$, consider $j,r$ such that $1\leq j \leq \mu_1$ and $1\leq r \leq \mu'_j$.
Then $\cc(w)^{(j,r)} = \cc(\sigma)$ for some $\sigma \in S_r$.
\end{prop}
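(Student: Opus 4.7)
The plan is to induct on $r$. The base case $r = 1$ is immediate from Lemma \ref{lemma:row is sum of cocharge label and mult}: the single-letter subword $\cc(w)^{(j,1)}$ is the cocharge value of the row-1 entry of column $j$, which must equal $1 - 1 = 0$ because any row-1 box is added on the first pass ($k = 1$). Thus $\cc(w)^{(j,1)} = 0 = \cc(\mathrm{id}_{S_1})$.

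For the inductive step, I would observe that $\cc(w)^{(j,r)}$ is obtained from $\cc(w)^{(j,r-1)}$ by inserting the single letter $v^* := \cc(w)_{i^*}$ at the position $k$ given by the rank of $i^*$ among the sorted indices $j^{(r)}_1 < \cdots < j^{(r)}_r$, where $i^*$ is the index of the row-$r$ entry of column $j$ in $T_w$. The key structural step is to prove that the column's cocharge values $v_s = \cc(w)_{i_s}$ for $s = 1, \ldots, r$ satisfy $v_s \leq v_{s-1} + 1$. Writing $v_s = s - k_s$ via Lemma \ref{lemma:row is sum of cocharge label and mult}, this reduces to $k_s \geq k_{s-1}$, which holds because row $s$ of column $j$ can only be filled after row $s-1$ of column $j$. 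An easy sub-induction then shows that the distinct values in $\{v_1, \ldots, v_{r-1}\}$ form the contiguous set $\{0, 1, \ldots, M\}$ where $M = \max_s v_s$, so $v^*$ is either already present in the old subword (Case A) or equals $M + 1$ (Case B). Case A follows immediately from Lemma \ref{lemma: adding a repeated cocharge value is still a cc word}(i).

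For Case B, I would argue that $v^* = M + 1 = v_{r-1} + 1$ forces $k^* = k_{r-1}$, so rows $r-1$ and $r$ of column $j$ are filled on the same pass. Within that pass the algorithm moves right-to-left, and if $i^*$ were processed before $i_{r-1}$, then at that moment row $r-1$ would not yet reach column $j$, so adding a box at $(r, j)$ would violate the partition-shape constraint. Hence $i^*$ is processed after $i_{r-1}$, giving $i^* < i_{r-1}$. This places the insertion position $k$ at or before the old-subword position $\tau$ of $i_{r-1}$, and since $v_{r-1} = M$ is a maximum, $\tau \leq m$ where $m$ is the position of the rightmost old maximum. Thus $k \leq m$, so Lemma \ref{lemma: adding a repeated cocharge value is still a cc word}(ii) applies; the boundary case $k = m$ is verified by hand against Proposition \ref{prop: classification of cocharge words}, since the shifted old maximum at new position $m + 1$ still satisfies condition (2) via $v^*$ at new position $m$. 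The main obstacle I anticipate is this Case B analysis, specifically the pass-number argument ruling out $i^* > i_{r-1}$, which requires careful tracking of how the partial shape evolves during pass $k^*$ and the observation that a box can only be added at $(r, j)$ once row $r-1$ has length at least $j$.
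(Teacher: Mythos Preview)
Your proof is correct and follows essentially the same approach as the paper: induction on $r$, comparing the new letter $\cc(w)_a$ (your $v^*$) to the letter $\cc(w)_b$ directly below it (your $v_{r-1}$), and splitting into the two cases handled by Lemma~\ref{lemma: adding a repeated cocharge value is still a cc word}(i) and (ii). Your pass-number argument $k^* = k_{r-1} \Rightarrow i^* < i_{r-1}$ is exactly the paper's observation that when $k_a = k_b$ the index $b$ must lie to the right of $a$; your contiguous-range sub-induction is a harmless repackaging of the inductive hypothesis that $\cc(w)^{(j,r-1)}$ is already a cocharge word.

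One minor point where you are actually more careful than the paper: you track the insertion position $k$ relative to the position $m$ of the \emph{rightmost} maximum and explicitly verify the boundary case $k = m$ against Proposition~\ref{prop: classification of cocharge words}. The paper only shows $a < b$ with $\cc(w)_b$ equal to the maximum value, then invokes Lemma~\ref{lemma: adding a repeated cocharge value is still a cc word} directly; strictly speaking that lemma is stated with $k < m$, so your extra check closes a small gap.
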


\begin{proof}
    We show this by induction on $r$. When $r=1$, we know $\cc(w)^{(j,1)} = 0$, since for any $i$ that appears in the first row of $T_w$ we must have $\cc(w)_i = 0$. Hence the claim holds.

    Assume the claim holds for $(r-1)$. We know that we obtain $\cc(w)^{(j,r)}$ from $\cc(w)^{(j,r-1)}$ by adding $\cc(w)_a$ into $\cc(w)^{(j,r-1)}$ in the correct position, where $a$ is the entry in row $r$, column $j$ of $T_w$.
    Let $b$ denote the entry directly below $a$ in $T_w$.
    Since $b$ appears below $a$, we know that when we constructed $T_w$ using Algorithm \ref{alg: cat insertion} to $\cc(w)$ we first added $b$, and then $a$, to the filling.
    If we let $k_a, k_b$ denote the number of times we read $a,b$ before adding them to $T_w$, we must have $k_b \leq  k_a$.
    If $k_a = k_b$, then since we add $b$ before $a$, we must have $a<b$, since $\cc(w)_b$ must be to the right of $\cc(w)_a$ in $\cc(w)$. We also have $\cc(w)_{a} = \cc(w)_{b} + 1$ from Lemma \ref{lemma:row is sum of cocharge label and mult} since $a$ is one row above $b$.  If $k_b < k_a$, then $\cc(w)_{b} \geq  \cc(w)_{a}$. 
    
    From this, we know that if $\cc(w)_a$ is larger than every letter in $\cc(w)^{(j,r-1)}$, we have $\cc(w)_a = \cc(w)_b+1$ where $b>a$ is the entry directly below $a$ in $T_w$ and $\cc(w)_b$ is the largest letter in $\cc(w)^{(j,r-1)}$.

     By Lemma \ref{lemma: adding a repeated cocharge value is still a cc word}, we conclude $\cc(w)^{(j,r)}$ is a cocharge word by induction.
\end{proof}

\begin{rem}
    The fact that $\cc(w)^{(j,r)}$ is a cocharge word is not obvious a priori, since it is not true that any subword of a cocharge word is a cocharge word. For example, consider $w = 21 $, which has $\cc(w) =10.$ Though $1$ is a subword of $\cc(w)$, it is not a cocharge word.
\end{rem}

\section{Coinvariant Ring and Garsia-Procesi Ring}\label{section: coinvariant ring and garsia procesi}

The \textit{coinvariant ring} $R_{1^n}$ is defined to be  $R_{1^n}= \mathbb{C}[\mathbf{x}]/ I,$ where $I$ is the ideal generated by $S_n$ invariant polynomials with zero constant term. More specifically, we can write $I = \langle e_1(\mathbf{x}),\dots, e_n(\mathbf{x})\rangle$.
It is clear that $R_{1^n}$ is a $\mathbb{C}S_n$-module under the action permuting the variables. 
It is well-known that this is a graded version of the regular representation of $S_n$: in particular, $\dim(R_{1^n}) = n!$. As a graded algebra with $S_n$ action, it is isomorphic to $H^{\ast}(\mathcal{F}_n)$, where $\mathcal{F}_n$ is the complete flag variety for $GL_n$. In particular,we have $\Hilbq(R_{1^n}) =[n]_q!$.

In Type $A$, we have that $H^{\ast}(\mathcal{F}_\mu)$,  the cohomology ring of the Springer fiber indexed by $\mu\vdash n$, is a quotient of $H^{\ast}(\mathcal{F}_n)$. These cohomology rings also have an $S_n$ action: we have the following concrete presentation of these $\mathbb{C}S_n$-modules as quotients of $R_{1^n}$ due to De Concini-Procesi \cite{deCon-Proc} and Tanisaki \cite{Tanisaki}. 
 Consider the transpose partition $\mu^{t} = (\mu^t_1 \geq \mu^t_2 \geq \cdots \geq  \mu^t_n \geq  0)$, where we pad the end of the partition with $0$'s until we have a tuple of length $n$. Let $p_k (\mu) : =\mu^t_n + \cdots + \mu^t_{n-k+1}$. Note that $p_k(\mu)$ is the number of boxes of the Young diagram of $\mu$ that are not in the first $n-k$ columns.  Given a subset $S\subset \mathbf{x}$, we define $e_d(S)$ to be the $d$th elementary symmetric function in the set of variables  $S$. 
The \textit{Tanisaki ideal} $I_\mu$ is defined to be
\[I_\mu = \langle e_d(S) \ | \ S\subset \mathbf{x}, d> |S| -p_{n-|S|}(\mu) \rangle .\]
The \textit{Garsia-Procesi ring} $R_\mu$ is the quotient  $R_\mu = \mathbb{C}[\mathbf{x}]/I_\mu$.
As a graded $\mathbb{C}S_n$-module, it is isomorphic to $H^{\ast}(\mathcal{F}_\mu)$.
The graded Frobenius character of $R_\mu$ is the modified Hall-Littlewood polynomial $\mHL_{\mu}[\mathbf{x};q]$.
Using the combinatorial formula \eqref{eq:catabolizable HL formula} for $\mHL_{\mu}[\mathbf{x};q]$, we compute $\Hilbq(R_\mu)$:
\begin{align}
     \Hilbq(R_{\mu}) &= \sum\limits_{\substack{T\in \syt\\  \ctype(T)\unrhd\mu}} q^{\text{cocharge}(T)} \text{dim} (V^{(\shape(T))}) \nonumber\\&=  \sum\limits_{\substack{T\in \syt\\  \ctype(T)\unrhd\mu}}  q^{\text{cocharge}(T)} |\syt(\shape(T))| \nonumber \\&=  \sum\limits_{\substack{T\in \syt\\  \ctype(P(w))\unrhd\mu}}  q^{\text{cocharge}(w)}\label{eq:Hilb of R_mu}.
\end{align}
When $\mu = 1^n$, we have $I_{1^n} = \langle e_1(\mathbf{x}),\dots, e_n(\mathbf{x})\rangle,$ hence the Garsia-Procesi ring indexed by $1^n$ is the full coinvariant ring $R_{1^n}$.

\subsection{Monomial bases of {$R_{1^n}$} and generalizations to {$R_{\mu}$}}
We recall two monomial bases of the coinvariant ring, both of which are indexed by permutations in $S_n$.

The \textit{Artin basis} is given by the following set of monomials:

\[\{f_\sigma(\mathbf{x}) = \prod_{i<j, \sigma_i > \sigma_j} x_{\sigma_i} \  | \ \sigma \in S_n \} = \{x_1^{a_1}x_2^{a_2}\cdots x_n^{a_n} \ | \ a_i < i\}.\]
From the second description of the Artin monomials, we can easily see that \[\sum_{w\in S_n}q^{\text{deg}(f_w(\mathbf{x}))} = (1+q)(1+q+q^2) \cdots (1+q+q^2+\cdots + q^{n-1}) = [n]_q!.\]

The \textit{Garsia-Stanton descent basis} is given by the following set of monomials:
\[\{g_w (\mathbf{x}) = \prod_{i:w_i > w_{i+1}} x_{w_1} \cdots x_{w_i} \ | \ w \in S_n\}.\]
Let $D_n = \{\alpha : \mathbf{x}^{\alpha} = g_w(\mathbf{x}) \text{ for some } w \in S_n\}$ denote the set of exponents of monomials in the descent basis. We refer to these words as \textit{descent words.} 
\begin{lemma}\label{lemma:charge and majt}
    We have $D_n = \{c(w) \ | \ w \in S_n\}.$
\end{lemma}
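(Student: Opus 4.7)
The plan is to prove the two sets are equal by matching cardinalities and exhibiting one containment. Since the descent monomials $\{g_w\}_{w\in S_n}$ form a basis of the coinvariant ring $R_{1^n}$ (of dimension $n!$), we have $|D_n| = n!$. On the other hand, Proposition \ref{prop: classification of cocharge words} (applied to $\rev(w)$, together with $\cw(w) = \rev(\cc(\rev(w)))$) shows how to recover $w$ from its charge word, so $w \mapsto \cw(w)$ is injective and $|\{\cw(w) : w \in S_n\}| = n!$ as well. Thus it suffices to prove $\{\cw(w) : w \in S_n\} \subseteq D_n$.

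For this, given $w \in S_n$ the plan is to produce an explicit $v \in S_n$ satisfying $\mathbf{x}^{\cw(w)} = g_v$. Define $v = v_1 \cdots v_n$ to be the unique permutation of $\{1, \ldots, n\}$ with $\cw(w)_{v_1} \geq \cw(w)_{v_2} \geq \cdots \geq \cw(w)_{v_n}$ and $v_j < v_{j+1}$ whenever $\cw(w)_{v_j} = \cw(w)_{v_{j+1}}$; informally, list positions by decreasing charge label, breaking ties by increasing position. Once the strict drops in this sorted label sequence are shown to drop by exactly $1$ and to coincide with descents of $v$, the exponent vector of $g_v$ will read off as $\cw(w)$.

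Two facts are needed. First, the set of charge labels $\{\cw(w)_i : 1\le i \le n\}$ forms a contiguous range $\{0, 1, \ldots, M\}$, which is immediate from the charge algorithm since each value's label differs from the previous by $0$ or $1$. Second, at each index $j$ where $\cw(w)_{v_j}$ strictly drops to $\cw(w)_{v_{j+1}}$, we must have $v_j > v_{j+1}$. The underlying structural claim is that within a charge-label class (a block of consecutive values $\{a, a+1, \ldots, b\}$ all assigned label $m$), the positions in $w$ of those values strictly decrease as the value increases, because consecutive values $i, i+1$ share a label iff $i+1$ lies to the left of $i$ in $w$. Consequently, the largest value $b$ with label $m$ sits at the minimum position of its label class, while $b+1$ (the first value of label $m+1$) lies to the right of $b$; hence the maximum position with label $m+1$ strictly exceeds the minimum position with label $m$, supplying the required descent at the block boundary in $v$.

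With these two facts, $\des(v)$ equals exactly the set of indices at which the sorted charge labels strictly decrease, and the number of descents of $v$ at positions $\geq j$ equals $\cw(w)_{v_j}$; this is by definition the exponent of $x_{v_j}$ in $g_v$, giving $g_v = \mathbf{x}^{\cw(w)}$ and hence the inclusion. The main obstacle is the structural claim about monotonicity of positions within a charge label class, which is the combinatorial heart of the argument, though it follows from the definition of the charge algorithm with careful bookkeeping.
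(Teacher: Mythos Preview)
Your proof is correct, but it takes a longer and more combinatorial route than the paper's. The paper dispatches the lemma in one line by writing the charge monomial explicitly as
\[
\mathbf{x}^{\cw(\sigma)} = \prod_{j:\,(\sigma^{-1})_j < (\sigma^{-1})_{j+1}} x_{(\sigma^{-1})_{j+1}}\cdots x_{(\sigma^{-1})_n},
\]
which is visibly $g_{\rev(\sigma^{-1})}(\mathbf{x})$; since $\sigma\mapsto\rev(\sigma^{-1})$ is a bijection on $S_n$, both inclusions follow at once. Your permutation $v$ (positions sorted by decreasing charge label, ties by increasing position) is in fact exactly $\rev(w^{-1})$: the charge label of a value is weakly increasing in the value, and within a label class positions decrease as values increase, so sorting by decreasing value and sorting by decreasing label with increasing-position tiebreak give the same list. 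So you have rediscovered the paper's bijection without naming it, and then verified $g_v=\mathbf{x}^{\cw(w)}$ by analyzing the descent structure from scratch rather than by the formula above. Your argument has the virtue of being self-contained and illuminating exactly why the descent pattern of $v$ matches the charge labels; the paper's argument is shorter but requires the reader to unpack the displayed identity.

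One small point: you invoke Proposition~\ref{prop: classification of cocharge words} for the injectivity of $w\mapsto\cw(w)$, but as stated that proposition only asserts existence of a preimage, not uniqueness. Uniqueness does follow (values in a fixed cocharge class are consecutive integers appearing in increasing position order, which determines $w$), and the reconstruction in the proof of the proposition is implicitly an inverse, but you should say this explicitly. Alternatively, once you observe that your $v$ equals $\rev(w^{-1})$, the bijection makes the cardinality argument unnecessary.
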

\begin{proof}
    For any $\sigma\in S_n$, the charge monomial is  \[\mathbf{x}^{\cw(\sigma)} = \prod_{j:(\sigma^{-1})_{j} < (\sigma^{-1})_{j+1}} x_{(\sigma^{-1})_{j+1}} \cdots x_{(\sigma^{-1})_n}.\]
Thus $\mathbf{x}^{\cw(\sigma)} = g_w(\mathbf{x})$ where $w = \text{rev}(\sigma^{-1})$. 
\end{proof}

The \textit{descent order} on monomials is defined by:
\begin{align}\label{eq:descent order}
    \mathbf{x}^{\alpha} \leq_{\text{des}}\mathbf{x}^{\beta} \Leftrightarrow \begin{cases}
    \text{sort}(\alpha) \preceq \text{sort}(\beta),  \\ \text{sort}(\alpha) = \text{sort}(\beta) \text{ and } \alpha \preceq \beta,
\end{cases}
\end{align}
where sort$(\alpha)$ reorders the vector $\alpha$ into a partition and $\preceq$ is the lexicographic ordering on words.

The descent monomials correspond to the permutation statistic $\text{maj}$. That is, we have $\text{deg}(g_w(\mathbf{x})) = \text{maj}(w)$. 
Since this statistic is Mahonian, we have:
 \[\sum\limits_{w\in S_n} q^{\text{deg}(g_{w}(\mathbf{x}))} = \sum\limits_{w\in S_n} q^{\text{maj}(w)} = [n]_q!.\]
Thus, either basis gives a combinatorial explanation of $\Hilbq(R_{1^n}) = [n]_q!$. 

Since $R_\mu$  is a quotient of $R_{1^n}$, the natural question to ask if whether there exists a subset of the Artin monomials or the descent monomials which is a monomial basis of $R_\mu$.
Garsia-Procesi \cite{GP} constructed a monomial basis of $R_\mu$  which is a subset of the Artin basis of $R_{1^n}$. However, this basis does not give an analogous combinatorial explanation of the expression \eqref{eq:Hilb of R_mu}.

The problem of finding a subset of the descent basis that gives a monomial basis of $R_\mu$ was recently solved by Carlsson--Chou \cite{CC}.
We now review the construction of the Carlsson--Chou descent basis. 
For two words $z^{(1)},z^{(2)}$ of length $l_1, l_2$ we define $\text{Sh}(z^{(1)},z^{(2)})$ to be the collection of all words $u$ of length $(l_1+l_2)$ such that $z^{(1)}$ and $z^{(2)}$ are two disjoint subwords of $u$.
We construct such $u$ by interleaving $z^{(1)}$ and  $z^{(2)}$. An element $u$ of $\text{Sh}( z^{(1)},z^{(2)})$ is a \textit{shuffle} of $z^{(1)}$ and $z^{(2)}$. We can extend this definition and let $\text{Sh}(z^{(1)},\dots , z^{(l)})$ denote the set of all shuffles of $z^{(1)},\dots , z^{(l)}$.
We also define the set of \textit{reverse shuffles}  of  $z^{(1)},\dots , z^{(l)}$, denoted $\text{Sh}'(z^{(1)},\dots , z^{(l)})$, to be equal to $\text{Sh}(\rev (z^{(1)}),\dots , \rev(z^{(l)}))$.

The Carlsson--Chou descent basis of the Garsia-Procesi ring is defined as follows. 
For $\mu = (\mu_1,\dots,\mu_l) \vdash n$, define $\mathcal{D}_\mu$ to be
\begin{align}
    \mathcal{D}_\mu = \bigcup_{(z^{(1)},\dots , z^{(l)})} \sh (z^{(1)},\dots , z^{(l)})
\end{align}
where $(z^{(1)},\dots , z^{(l)})$ ranges over all $l$-tuples in $\mathcal{D}_{\mu_1}\times \cdots \times \mathcal{D}_{\mu_l}$.

\begin{thm}[Carlsson--Chou \cite{CC}]
The set $\mathbf{x}^{\mathcal{D}_\mu} = \{\mathbf{x}^\alpha \ | \ \alpha \in  \mathcal{D}_\mu\}$  is a monomial basis of $R_{\mu^t}$. 
\end{thm}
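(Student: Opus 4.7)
The plan is to establish the theorem by combining two ingredients: (a) showing the monomials in $\mathbf{x}^{\mathcal{D}_\mu}$ span $R_{\mu^t}$, and (b) showing that $|\mathcal{D}_\mu|$ equals $\dim R_{\mu^t}$, which can be read off from \eqref{eq:Hilb of R_mu} at $q=1$. Together these force the spanning set to be a basis, since a spanning set of size equal to the dimension is automatically linearly independent.

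For the spanning step, I would use the descent order $\leq_{\text{des}}$ defined in \eqref{eq:descent order} to set up a leading-term argument against the Tanisaki generators. Each generator $e_d(S)$ of $I_{\mu^t}$, expanded as a sum of squarefree monomials in the variables indexed by $S$, has a unique leading monomial under $\leq_{\text{des}}$. The goal is a straightening algorithm: for any $\alpha \notin \mathcal{D}_\mu$, produce an element $f_\alpha \in I_{\mu^t}$ whose $\leq_{\text{des}}$-leading term is $\mathbf{x}^\alpha$, which rewrites $\mathbf{x}^\alpha$ modulo $I_{\mu^t}$ as a combination of monomials strictly smaller in the descent order. Since $\leq_{\text{des}}$ is well-founded on monomials of bounded degree, iterating this reduction terminates at a linear combination of $\mathbf{x}^\beta$ with $\beta \in \mathcal{D}_\mu$. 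The combinatorial heart of this step is identifying the irreducible normal forms under the reduction as exactly the shuffles of descent words from $\mathcal{D}_{\mu_1}\times \cdots \times \mathcal{D}_{\mu_l}$; one direction shows that every shuffle is irreducible because no Tanisaki generator with smaller leading term applies, and the other shows that a non-shuffle is always reducible by exhibiting an appropriate $e_d(S)$.

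For the cardinality step, one must evaluate $|\mathcal{D}_\mu|$ and compare with \eqref{eq:Hilb of R_mu} at $q=1$, namely $\sum_{\ctype(T)\unrhd \mu^t}|\syt(\shape(T))|$. The natural approach is bijective: associate to each shuffle $\alpha \in \sh(z^{(1)},\ldots,z^{(l)})$ a pair $(P,Q)$ of standard tableaux where the tuple $(z^{(1)},\ldots,z^{(l)})$ constrains $P$ to have $\ctype(P)\unrhd \mu^t$ and the interleaving pattern of the shuffle records $Q$. Verifying this map is well-defined and bijective requires understanding why the shuffle structure on descent words precisely corresponds to the catabolizability condition on tableaux, which is where this proof ties back into the RSK and catabolism machinery.

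The main obstacle will be the combinatorial step identifying which exponent vectors belong to $\mathcal{D}_\mu$. The definition of $\mathcal{D}_\mu$ as a union of shuffle sets is inherently redundant, since the same word can arise as a shuffle from multiple base tuples, and one must extract a canonical description before matching it to the normal forms produced by the Tanisaki straightening or to the tableau enumeration. Resolving this opacity — either by finding a non-redundant parametrization of $\mathcal{D}_\mu$ or by a clever counting argument that handles the overlaps — is the crux of the argument, and is precisely the gap that the author's Theorem A fills by reinterpreting $\mathcal{D}_\mu$ through charge monomials and catabolizability type.
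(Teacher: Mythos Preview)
This theorem is not proved in the paper: it is quoted from Carlsson--Chou \cite{CC} and used as a black box. The paper's own contribution (Theorem~A) establishes $\mathcal{C}_\mu = \mathcal{D}_\mu$, and the proof in Section~\ref{section:descent basis} explicitly invokes the Carlsson--Chou result to conclude that $\mathbf{x}^{\mathcal{C}_\mu}$ is a basis. So there is no ``paper's own proof'' of this statement to compare against.

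That said, your outline is a sensible sketch of how such a theorem is typically proved, and it is worth separating what this paper does and does not supply toward it. Your step~(b) can in fact be extracted from this paper without circularity: Section~\ref{section:descent basis} proves $\mathcal{C}_\mu \subset \mathcal{D}_\mu$, and Section~\ref{section:ctype sum} (via Theorem~\ref{thm: sum of ctypes} and its corollary) proves the reverse containment $\mathcal{D}_\mu \subset \mathcal{C}_\mu$ directly, without appealing to Carlsson--Chou. Hence $\mathcal{D}_\mu = \mathcal{C}_\mu$, and the Hilbert series formula \eqref{eq:Hilb of R_mu} gives $|\mathcal{D}_\mu| = \dim R_{\mu^t}$. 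So the ``redundancy of shuffles'' obstacle you flag is resolved by the paper's combinatorics. Be careful, though: your final paragraph suggests invoking Theorem~A for this, but the short proof of Theorem~A in Section~\ref{section:descent basis} uses Carlsson--Chou for the cardinality match, which would be circular; you must instead go through Section~\ref{section:ctype sum}.

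Your step~(a), the spanning/straightening argument against the Tanisaki generators under the descent order, is the genuine content of \cite{CC} and is nowhere in this paper. Your description of it is only a plan, not a proof: you have not identified which $e_d(S)$ reduces a given $\alpha \notin \mathcal{D}_\mu$, nor verified that the $\leq_{\text{des}}$-leading term of the chosen relation is actually $\mathbf{x}^\alpha$, nor shown that the irreducible monomials are exactly the shuffles. Each of these requires real work specific to the Tanisaki ideal and the shuffle combinatorics, and until that is done your argument has a gap at precisely the point where the cited theorem does its work.
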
   

\begin{ex}\label{ex: 31}
    Consider $\mu = (3,1).$ Note that 
    \begin{align*}
        D_3 &= \{012,011, 101, 001,010, 000\},\\
        D_1 &= \{0\}.
    \end{align*}
  Then we have 
  \[\mathcal{D}_{3,1}  = \{0012,\ 0102,\ 0120, \ 0011, \ 0101, \ 1001, \ 1010, \ 0110,\  0001,\ 0010,\ 0100, \ 0000\} .\]
  The corresponding set of monomials is  
  \begin{align*}
       \{x_3x_4^2,
  \ x_2x_4^2, \ x_2x_3^2, \ x_3x_4 ,\  x_2x_4, \ x_1x_4, \ x_1x_3, \ x_2x_3, \ x_4, \ x_3 , \ x_2, \ 1\},
  \end{align*}
\end{ex}

\begin{rem}
    Carlsson--Chou show that their basis is a subset of the Garsia-Stanton descent basis. 
However, it is not obvious how to see directly for which $w\in S_n$ we have $g_w(\mathbf{x}) \in \mathbf{x}^{\mathcal{D}_{\mu}}$ without computing $\mathcal{D}_\mu$.
It is also not obvious that $|\mathcal{D}_{\mu}| = \dim(R_{\mu^t})$, since multiple shuffles can correspond to the same descent word. For example, $\mathbf{0}101 \in \sh(101,\mathbf{0})$ and $01\mathbf{0}1 \in \sh(011,\mathbf{0})$ both correspond to the same element in $\mathcal{D}_{\mu}$.

\end{rem}

\section{Charge Monomial Basis of the Garsia-Procesi ring}\label{section:descent basis}
We now define sets of charge monomials that are monomial bases of Garsia-Procesi rings.
Our construction involves charge words of permutations whose insertion tableaux satisfy a catabolizability condition.

\begin{defn}
    The set $\mathcal{C}_\mu$ is defined to be
\begin{align}\label{def:C_mu}
    \mathcal{C}_\mu := \{\cw(w)\ | \ w\in S_n, \  \ctype(P(w) ^t)\unrhd\mu\}.
\end{align}
\end{defn}

From Lemma \ref{lemma:charge and majt}, we know $\mathcal{C}_\mu\subset D_n$.

\begin{thm1}
\makeatletter\def\@currentlabel{A}\makeatother
\label{thm: desc basis are the same} 
    The set 
    \begin{align}\label{eq: our basis}
        \mathbf{x}^{\mathcal{C}_\mu} = \{\mathbf{x}^\alpha \ | \ \alpha \in \mathcal{C}_\mu\}
    \end{align}
     is a monomial basis of $R_{\mu}$. 
       In fact, it coincides with the basis $  \mathbf{x}^{\mathcal{D}_{\mu^t}}$ given by Carlsson--Chou \cite{CC}, ie., $\mathcal{C}_\mu = \mathcal{D}_{\mu^t}$.
\end{thm1}

Before we prove Theorem \ref{thm: desc basis are the same}, we point out the connections between this basis and the Hilbert series $\Hilbq(R_{\mu})$.
Using combinatorial formula \eqref{eq:catabolizable HL formula} for the modified Hall-Littlewood polynomials and Proposition \ref{thm: charge cocharge trabspose} gives us the following expression for $\mHL_{\mu}[X;q]$.
\begin{align}
    \mHL_{\mu} [X;q] &=  \sum\limits_{\substack{T\in \syt_n\\  \ctype(T^t)\unrhd\mu}}  q^{\charge(T)} s_{\shape(T^t)}(X).
\end{align}

From this, we can see that the classification of insertion tableaux that appears in \eqref{def:C_mu} is the natural one to consider when looking at subsets of descent monomials. 
In fact, it is evident from our construction that the degrees of the monomials in our set match what we expect from $\Hilbq(R_{\mu})$. 
Rephrasing \eqref{eq:Hilb of R_mu} for $\mHL_{\mu}[X;q]$ using Proposition \ref{thm: charge cocharge trabspose}, we have
\begin{align}
     \Hilbq(R_{\mu}) &=  \sum\limits_{\substack{w\in S_n\\  \ctype(P(w)^t)\unrhd\mu}}  q^{\text{charge}(w)} .
\end{align}
It is also apparent that $|\mathcal{C}_\mu| = \dim(R_{\mu})$, which was not obvious from the definition of $\mathcal{D}_{\mu^t}$.
Furthermore, it is clear that our set has the correct number of monomials of each degree to be a basis of $R_{\mu}$ by construction.
That is, for any nonnegative $d$, we have 
\[|\{w\in S_n, \ctype(P(w))^t \unrhd \mu, \charge(w) = d\}| = \dim((R_{\mu})_d),\]
where $(R_{\mu})_d$ is the degree $d$ component of $R_{\mu}$.
This construction also gives us a direct way to determine for which $w\in S_n$ we have $g_w(\mathbf{x})\in \mathbf{x}^{\mathcal{C}_\mu}$.
In particular, we have $g_w(\mathbf{x})\in \mathbf{x}^{\mathcal{C}_\mu}$ for $w = \text{rev}(\sigma^{-1})$ where $\ctype(P(\sigma)^t)\unrhd \mu$.

\begin{ex}
Consider $\mu = (2,1,1)$. There are five standard tableaux $S$ such that $\ctype(S^t) \unrhd (2,1,1) = \mu$. We list them, along with all words $w$ such that $P(w) = S$ and their charge monomials:
\begin{center}
     \ytableausetup{smalltableaux,  aligntableaux = center}
   \begin{tabular}{ p{1cm}|p{5cm}  p{5cm}  }
$S$ & $\{w \ | \ P(w) = S\}$ & $\{x^{\cw(w)} \ | \ P(w) = S\}$\\
 \hline
\\[-.5em]
 \ytableaushort{2,134}  &\{2134,
  \ 2314, \ 2341\}& $\{x_3x_4^2,
  \ x_2x_4^2, \ x_2x_3^2\}$ \\[-.5em] \\
\ytableaushort{24,13} &  \{2143 ,\  2413\} &  $\{x_3x_4 ,\  x_2x_4\}$  \\[-.5em] \\
\ytableaushort{4,2,13} & \{4213, \ 4231, \ 2431\} &  $\{x_1x_4, \ x_1x_3, \ x_2x_3\}$\\[-.5em] \\
\ytableaushort{3,2,14} & \{3214, \ 3241 , \ 3421\} & $\{x_4, \ x_3 , \ x_2\}$\\[-.5em] \\
\ytableaushort{4,3,2,1}&  \{4321\}& $\{1\}$
\end{tabular}
\end{center}
Note that all charge monomials for words with the same insertion tableau $S$ have degree equal to charge$(S)$. 
The resulting set of charge monomials is  \[\{x_3x_4^2,
  \ x_2x_4^2, \ x_2x_3^2, \ x_3x_4 ,\  x_2x_4, \ x_1x_4, \ x_1x_3, \ x_2x_3, \ x_4, \ x_3 , \ x_2, \ 1\},\]
which is the same as in Example \ref{ex: 31}.
\end{ex}

Now we will prove Theorem \ref{thm: desc basis are the same} by showing $\mathcal{C_\mu} = \mathcal{D}_{\mu^t}$, which implies that our set is a basis of $R_{\mu}$ by the work of Carlsson--Chou.
Since we know that $\mathcal{D}_{\mu^t}$ is a basis of $R_{\mu}$ and that $\mathcal{C_\mu}$ has the correct cardinality to be a basis, it suffices to show that $\mathcal{C}_\mu\subset \mathcal{D}_{\mu^t}$.
We first make the following observation, which is a corollary of Lemma \ref{lemma:charge and majt} using \eqref{eq: charge is reverse of cocharge of reverse}:

\begin{cor}\label{cor:cocharge and majt} $D_n = \{\rev(\cc(w)) \ | \ w \in S_n\}.$
\end{cor}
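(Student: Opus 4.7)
The plan is to derive this directly from Lemma \ref{lemma:charge and majt} by substituting the defining identity \eqref{eq: charge is reverse of cocharge of reverse} for the charge word and then reindexing via the involution $\rev$ on $S_n$.

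First, I would recall from Lemma \ref{lemma:charge and majt} that $D_n = \{\cw(w) \mid w \in S_n\}$, so every descent word arises as the charge word of some permutation and vice versa. Next, I would invoke \eqref{eq: charge is reverse of cocharge of reverse}, which defines $\cw(w) = \rev(\cc(\rev(w)))$, to rewrite this set as
\[
D_n = \{\rev(\cc(\rev(w))) \mid w \in S_n\}.
\]
Finally, since the map $w \mapsto \rev(w)$ is an involution on $S_n$, hence a bijection, I would reindex by setting $\sigma := \rev(w)$. As $w$ ranges over all of $S_n$, so does $\sigma$, and the above set becomes $\{\rev(\cc(\sigma)) \mid \sigma \in S_n\}$, which is exactly the right-hand side of the claim.

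There is no substantive obstacle here: the argument is a direct chain of two identities followed by a trivial reindexing. The only point worth flagging is verifying that $\rev \colon S_n \to S_n$ is genuinely a bijection (which is immediate because $\rev \circ \rev = \mathrm{id}$), so that the reparametrization does not alter the underlying set of words. Accordingly I would present the corollary as a single displayed chain of equalities with a parenthetical remark noting the bijectivity of $\rev$.
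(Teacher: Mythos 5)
Your proposal is correct and matches the paper's intent exactly: the paper states this as a direct corollary of Lemma \ref{lemma:charge and majt} combined with the identity \eqref{eq: charge is reverse of cocharge of reverse}, which is precisely your chain of substitutions followed by reindexing along the involution $\rev$ on $S_n$.
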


To show that $\mathcal{C}_\mu\subset \mathcal{D}_{\mu^t}$, we translate the problem into one about cocharge words to make use of Algorithm \ref{alg: cat insertion}. We first consider the case where $\ctype(w) = \mu$.

\begin{prop}\label{prop: ctype implies shuffle}
    Let $\mu$ be a partition of $n$ of length $l$ and  $w$ be a permutation of $n$ with $\ctype(w) = \mu$. 
    Then $\cc(w) \in \sh'(z^{(1)},\dots , z^{(l)})$ for some $(z^{(1)},\dots , z^{(l)})\in D_{\mu^t_1}\times \cdots \times D_{\mu^t_l}$.
\end{prop}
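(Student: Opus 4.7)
The plan is to reduce the statement to an application of Proposition \ref{prop: subword of column is a cocharge word} by using the filling $T_w$ produced by the catabolism insertion algorithm.

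First, I would translate the target statement into cocharge language. By Corollary \ref{cor:cocharge and majt}, a word lies in $D_k$ if and only if it is the reverse of $\cc(\sigma)$ for some $\sigma \in S_k$. Consequently, the condition $\cc(w) \in \sh'(z^{(1)},\dots,z^{(l)})$ with $z^{(j)} \in D_{\mu_j}$ is equivalent to exhibiting permutations $\sigma^{(j)} \in S_{\mu_j}$ for $j = 1,\dots,l$ such that $\cc(w)$ is an ordinary shuffle of $\cc(\sigma^{(1)}),\dots,\cc(\sigma^{(l)})$.

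Next, I would read off the candidate subwords from $T_w$. Since $\ctype(w) = \mu^t$, the filling $T_w$ has shape $\mu^t$, so its $j$th column has height exactly $\mu_j$ for $1 \le j \le l$. The positions in $T_w$ partition $\{1,\dots,n\}$ into $l$ disjoint sets according to their column, and for each column $j$ the subword $\cc(w)^{(j)}$ is defined as the subword of $\cc(w)$ at the indices appearing in column $j$ (listed in increasing order of index). By construction, the concatenation of these index sets, reordered by index, recovers all of $\{1,\dots,n\}$, so $\cc(w)$ is itself a shuffle of $\cc(w)^{(1)},\dots,\cc(w)^{(l)}$.

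Finally, I would invoke Proposition \ref{prop: subword of column is a cocharge word} with $r = \mu_j$ to conclude that each $\cc(w)^{(j)}$ equals $\cc(\sigma^{(j)})$ for some $\sigma^{(j)} \in S_{\mu_j}$. Setting $z^{(j)} := \rev(\cc(\sigma^{(j)}))$ yields an element of $D_{\mu_j}$ by Corollary \ref{cor:cocharge and majt}, and the shuffle observation of the previous paragraph gives $\cc(w) \in \sh(\cc(\sigma^{(1)}),\dots,\cc(\sigma^{(l)})) = \sh'(z^{(1)},\dots,z^{(l)})$, completing the proof. The only real subtlety here is not in the proof itself but rather in lining up the combinatorial conventions: keeping track that taking reverses converts $D_{\mu_j}$ (descent words) into cocharge words, and that column heights of $T_w$ (shape $\mu^t$) match the parts of $\mu$, so that Proposition \ref{prop: subword of column is a cocharge word} is applied with the right bound on $r$.
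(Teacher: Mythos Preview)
Your proposal is correct and follows essentially the same approach as the paper's proof: both use the filling $T_w$ from Algorithm~\ref{alg: cat insertion} to split $\cc(w)$ into column subwords $\cc(w)^{(j)}$ of length $\mu_j$, apply Proposition~\ref{prop: subword of column is a cocharge word} to identify each as a cocharge word, and then invoke Corollary~\ref{cor:cocharge and majt} to land in $D_{\mu_j}$. Your write-up is slightly more explicit about unwinding the reverse-shuffle and descent-word conventions, but the underlying argument is identical.
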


\begin{proof}
    We obtain a filling $T_w$ of shape $\ctype(w) = \mu $ by applying Algorithm \ref{alg: cat insertion} to $\cc(w)$.
    It is clear that $\cc(w) \in \sh (\cc(w)^{(1)}, \cc(w)^{(2)}, \dots, \cc(w)^{(l)})$, where $\cc(w)^{(j)}$ is the subword of $\cc(w)$ corresponding to column $j$ in $T_w$. We know $\cc(W)^{(j)}$ has length $\mu_j$, which is the height of column $j$. 
    Thus, from Proposition \ref{prop: subword of column is a cocharge word}, we know $\cc(w)^{(j)} = \cc(\sigma)$ for $\sigma\in S_{\mu^t_j}$. 
    By Corollary \ref{cor:cocharge and majt}, this implies  $\rev(\cc(w)^{(j)}) \in \mathcal{D}_{\mu^t_j}$.
\end{proof}

\begin{ex}\label{ex: cat ex}
    Recall that in Example \ref{ex: cat alg}, we showed that for $w = 6 \ 3 \ 4  \ 1 \ 2\ 5 $, with $\cc(w) = 2 \ 1 \ 1  \ 0 \ 0\ 1 $, we have  $\ctype(w) = (2,2,2).$ and that the filling $T_w$ of $ (2,2,2) $ is 
        \[\ytableaushort{16,32,54}. \] 
    From this, we construct two subwords of lengths $3$ respectively:
    \begin{align*}
        \cc(w)^{(1)} &= \cc(w)_1\cc(w)_3\cc(w)_5 = 210\\
         \cc(w)^{(2)} &= \cc(w)_2\cc(w)_4\cc(w)_6 = 101
    \end{align*}
    We can see that $\cc(w)^{(1)} = \cc(321) = \rev(\cw(123))$ and $\cc(w)^{(2)} = \cc(213) = \rev(\cw(213))$. 
    Thus $\cc(w)\in \text{Sh} '(\cw(123), \cw(213))$ , where $(\cw(123), \cw(213)) \in \mathcal{D}_{(3,3)}$.
\end{ex}

We rephrase Proposition \ref{prop: ctype implies shuffle} to match the language of Theorem \ref{thm: desc basis are the same} using \eqref{eq: charge is reverse of cocharge of reverse}:

\begin{cor}\label{cor:ctype implies shuffle}
    For any $w\in S_n$ such that $\ctype(P(w)^t) =\mu$, we have $\cw(w)\in \mathcal{D}_{\mu^t}$.
\end{cor}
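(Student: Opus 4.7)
The plan is to derive this corollary as a direct translation of Proposition~\ref{prop: ctype implies shuffle}, taking advantage of the symmetry between $\cw$ and $\cc$ under word reversal, and the symmetry between $\sh$ and $\sh'$ under reversal of the component words. The three ingredients I will need are: the identity $\cw(w)=\rev(\cc(\rev(w)))$ from \eqref{eq: charge is reverse of cocharge of reverse}, the RSK identity $P(\rev(w))=P(w)^t$ from Proposition~\ref{thm:RSK rev}, and the definition $\sh'(y^{(1)},\dots,y^{(l)})=\sh(\rev(y^{(1)}),\dots,\rev(y^{(l)}))$.

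First, I would set $v:=\rev(w)$ and rewrite the hypothesis. Since $P(v)=P(w)^t$ by Proposition~\ref{thm:RSK rev}, the assumption $\ctype(P(w)^t)=\mu^t$ becomes $\ctype(P(v))=\mu^t$, i.e.\ $\ctype(v)=\mu^t$. Thus $v$ satisfies the hypothesis of Proposition~\ref{prop: ctype implies shuffle}, and there exist words $z^{(1)},\dots,z^{(l)}$ with $z^{(i)}\in D_{\mu_i}$ such that $\cc(v)\in\sh'(z^{(1)},\dots,z^{(l)})$.

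Next, I would translate this shuffle statement for $\cc(v)$ into one for $\cw(w)$ by applying $\rev$. The key observation (straightforward to check from the definition of a shuffle, since reversing a word reverses each of its disjoint subwords) is that for any words $a^{(1)},\dots,a^{(l)}$,
\[
\{\rev(u):u\in\sh(a^{(1)},\dots,a^{(l)})\}=\sh(\rev(a^{(1)}),\dots,\rev(a^{(l)})).
\]
Combining this with the definition of $\sh'$ gives
\[
\rev\bigl(\sh'(z^{(1)},\dots,z^{(l)})\bigr)=\rev\bigl(\sh(\rev(z^{(1)}),\dots,\rev(z^{(l)}))\bigr)=\sh(z^{(1)},\dots,z^{(l)}).
\]
Since $\cw(w)=\rev(\cc(v))$, I conclude that $\cw(w)\in\sh(z^{(1)},\dots,z^{(l)})$ with each $z^{(i)}\in D_{\mu_i}=\mathcal{D}_{\mu_i}$, and therefore $\cw(w)\in\mathcal{D}_\mu$, as desired.

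There is essentially no obstacle here; the whole content is bookkeeping about how the two reversals (on permutations, and on component words in the shuffle operation) line up with the RSK transpose and with $\cw\leftrightarrow\cc$. The only subtle point to state carefully is the commutation of $\rev$ with $\sh$, which is what converts the reverse-shuffle output of Proposition~\ref{prop: ctype implies shuffle} into an ordinary shuffle belonging to $\mathcal{D}_\mu$.
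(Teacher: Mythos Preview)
Your proof is correct and follows essentially the same route as the paper: apply Proposition~\ref{prop: ctype implies shuffle} to $v=\rev(w)$ (using Proposition~\ref{thm:RSK rev} to convert the hypothesis), then reverse the resulting reverse-shuffle via \eqref{eq: charge is reverse of cocharge of reverse} to land in $\sh(z^{(1)},\dots,z^{(l)})\subset\mathcal{D}_\mu$. Your write-up is in fact more explicit than the paper's, which compresses the argument into a one-line equivalence and leaves the substitution $w\mapsto\rev(w)$ implicit.
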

\begin{proof}
    From  \eqref{eq: charge is reverse of cocharge of reverse} we know  \[\cc(w) \in \sh'(z^{(1)},\dots , z^{(l)}) \Leftrightarrow\cw(\rev(w))= \rev(\cc(w)) \in \sh(z^{(1)},\dots , z^{(l)}).\]
    We also have $\ctype(\rev(w)) = \ctype(P(w)^t)$ using Theorem \ref{thm:RSK rev}.
\end{proof}

Note that Corollary \ref{cor:ctype implies shuffle} only considers permutations $w$ with $\ctype(P(w))^t) = \mu$. To prove the inclusion $\mathcal{C}_\mu \subset \mathcal{D}_{\mu^t}$, we extend this argument to consider $\ctype(P(w))^t) \rhd  \mu$.

\begin{proof}[Proof of  Theorem \ref{thm: desc basis are the same}]
We know $|\mathcal{C}_\mu| = |\mathcal{D}_{\mu^t}|$, since $|\mathcal{C}_\mu| = \text{dim}(R_{\mu})$ and $\mathcal{D}_{\mu^t}$ is a basis of $R_{\mu}$.
Hence to show equality of the two sets, we will show $\mathcal{C}_\mu \subset \mathcal{D}_{\mu^t}$.
From Corollary \ref{cor:ctype implies shuffle}, it suffices to consider $w\in S_n$ where $\ctype(P(w)^t) \rhd \mu$.
First, consider $w\in S_n$ such that $\ctype(P(w)^t)  = \lambda\unrhd \mu$ where $\lambda$ and $\mu$ differ by moving one box in the Young diagram. 
Assume that we construct $\mu$ from $\lambda$ by taking the last box in some column $j_2$ of $\lambda$ and moving it to the top of column $j_1$, where $j_1< j_2$.

Since $\ctype(P(w)^t) = \ctype(\rev(w))$, we consider the filling $T_{\rev(w)}$ of $\lambda$ that we get from Algorithm \ref{alg: cat insertion}.
Let $a$ be the entry in the last box of column $j_2$. 
Take the box containing $a$ and append it into the end of column $j_1$.

By assumption, we know the resulting shape is $\mu$. Denote the resulting filling of shape $\mu$ by $T_{\rev(w)}^{(\mu)}$. 
We claim that each subword of $\cc(\rev(w))$ that corresponds to a column of $T_{\rev(w)}^{(\mu)}$ is still a valid cocharge word.
The only columns we need to consider are columns $j_1$ and $j_2$. Since the only modification to column $j_2$ is taking off the last entry we added, we see that the resulting word is still a valid cocharge word from Proposition \ref{prop: subword of column is a cocharge word}.

Since $j_1< j_2$ and $T_{\rev(w)}$ is partition shape, there exists an entry $b$ in column $j_1$ which is in the same row as $a$ in $T_{\rev(w)}$. Since $j_1<j_2$, we know we first added $b$, and then $a$, to the filling $T_{\rev(w)}$.  If we let $k_a, k_b$ denote the number of times we read $a,b$ before adding them to $T_w$, we must have $k_b \leq  k_a$. Thus, we know that $\cc(\rev(w))_a \leq  \cc(\rev(w))_b$ from Lemma \ref{lemma:row is sum of cocharge label and mult}. 

Thus adding $a$ to $\cc(\text{rev}(w))^{(j_1)}$ does not introduce a new cocharge value to this word since $\cc(\rev(w))_b$ is in $\cc(\text{rev}(w))^{(j_1)}$.
From case (i) of Lemma \ref{lemma: adding a repeated cocharge value is still a cc word}, we conclude the resulting word is a cocharge word of a permutation. 

If $\ctype(\rev(w))\unrhd \mu$ differ by moving multiple boxes, we repeat this process until we get a filling of $\mu$ where each of the columns correspond to valid cocharge words. It is clear that $\cc(\rev(w))$ is a shuffle of these cocharge words.
Hence $\cw(w) \in D_{\mu^t}$.
From this, we have $\mathcal{C}_\mu = \mathcal{D}_{\mu^t}$. Since $\mathbf{x}^{\mathcal{D}_{\mu^t}}$ is a basis of $R_{\mu}$, we conclude that our set  $\mathbf{x}^{\mathcal{C}_\mu}$  is a basis as well.
\end{proof}

\begin{ex}
We use the same $w, T_w$ as in Example \ref{ex: cat ex}.
For $\mu = (2,2,1,1) \unlhd \ctype(w)$, we can create a new filling $T_w^{\mu}$ of shape $\mu$ by moving the last box in column $2$ of $T_w$ to the end of column 1. This gives us \[T_w^{\mu} = \ytableaushort{6,1,32,54}.\] The two new subwords coming from the columns of $T_w^{\mu}$ are $\cc(w)_1\cc(w)_3\cc(w)_5\cc(w)_6 = 2101$ and 
         $\cc(w)_2\cc(w)_4\cc = 10$, both of which are still cocharge words. 
\end{ex}

\begin{rem}
    Note that \cite[Algorithm 1]{CC} gives a way to take a descent word $\mathbf{a}\in \mathcal{D}_\mu$ and recover an ordered set partition $(A_1|\dots|A_l)$ of $n$ such that $|A_i| = \mu_i$ and $\mathbf{a}\vert_{A_i}\in D_{\mu_i}$. That is: the ordered set partition identifies a canonical way to see $\mathbf{a}$ as a shuffle of descent words. This algorithm does not necessarily agree with our construction of a canonical decomposition of the charge word $\cc(w)$ outlined in Proposition \ref{prop: subword of column is a cocharge word}.
    For example: if we consider $01011\in \mathcal{C}_{2,2,1} = \mathcal{D}_{3,2}$, the Carlsson--Chou construction decomposes this into $\mathbf{01}0\mathbf{1}1$, while our construction decomposes it into $\mathbf{01}01\mathbf{1}$.
\end{rem}

\section{Properties of catabolizability type}\label{section:ctype sum}
Theorem \ref{thm: desc basis are the same} says that two conditions, one involving catabolizability type and the other involving shuffles, are equivalent. In particular, we get the following corollary.

\begin{cor}\label{cor: shuffles of cocharge words have correct ctype}
    Let $u^{(1)}, u^{(2)}, \dots, u^{(l)}$ be permutations where $u^{(j)}$ is of length $\mu_j^t$. Then we have \[\sh(\cc(u^{(1)}),\dots, \cc(u^{(l)})) \subset \{\cc(w) \ | \ w \in S_n, \ \ctype(w)\unrhd \mu\}.\]
\end{cor}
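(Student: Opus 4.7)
The plan is to deduce Corollary \ref{cor: shuffles of cocharge words have correct ctype} directly from the equality $\mathcal{C}_\mu = \mathcal{D}_\mu$ established in Theorem \ref{thm: desc basis are the same}. The corollary is essentially the cocharge-word translation of the inclusion $\mathcal{D}_\mu \subset \mathcal{C}_\mu$, so the work amounts to carefully tracking reversals through the charge/cocharge and RSK dictionary already set up in the background.

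First I would fix an element $s \in \sh(\cc(u^{(1)}), \dots, \cc(u^{(l)}))$. Since reversing a word reverses each of its disjoint subwords, $\rev(s)$ lies in $\sh(\rev(\cc(u^{(1)})), \dots, \rev(\cc(u^{(l)})))$. By Corollary \ref{cor:cocharge and majt}, each $\rev(\cc(u^{(j)}))$ is a descent word in $D_{\mu_j}$ (since $u^{(j)} \in S_{\mu_j}$), so by the definition of $\mathcal{D}_\mu$ this gives $\rev(s) \in \mathcal{D}_\mu$.

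Next, apply Theorem \ref{thm: desc basis are the same} to conclude $\rev(s) \in \mathcal{C}_\mu$; that is, $\rev(s) = \cw(w)$ for some $w \in S_n$ with $\ctype(P(w)^t) \unrhd \mu^t$. Set $w' := \rev(w)$. Equation \eqref{eq: charge is reverse of cocharge of reverse} yields $s = \rev(\cw(w)) = \cc(\rev(w)) = \cc(w')$, and Proposition \ref{thm:RSK rev} gives $P(w) = P(w')^t$, so
\[
\ctype(w') = \ctype(P(w')) = \ctype(P(w)^t) \unrhd \mu^t.
\]
Thus $s = \cc(w')$ for a permutation $w'$ of the required catabolizability type, establishing the claimed inclusion.

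The only potential obstacle would be proving the shuffle-to-cocharge-word direction from scratch — essentially a reverse-engineered analogue of the box-moving argument on $T_{\rev(w)}$ used in the proof of Theorem A — but since that combinatorial content and the cardinality count identifying $\mathcal{C}_\mu$ with $\mathcal{D}_\mu$ are both already packaged in Theorem A, what remains here is purely the bookkeeping of reversals described above.
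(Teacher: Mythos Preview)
Your proof is correct and follows essentially the same approach as the paper: both deduce the corollary from the inclusion $\mathcal{D}_\mu \subset \mathcal{C}_\mu$ (packaged in Theorem~A) by reversing words and invoking Proposition~\ref{thm:RSK rev} together with \eqref{eq: charge is reverse of cocharge of reverse}. You have simply spelled out the reversal bookkeeping that the paper leaves implicit.
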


\begin{proof}
This follows from rephrasing the relation $\mathcal{D}_{\mu^t} \subset \mathcal{C}_\mu$ by reversing the words and using Proposition \ref{thm:RSK rev} and \eqref{eq: charge is reverse of cocharge of reverse}.
\end{proof}
However, we can show the following stronger result using properties of Algorithm \ref{alg: cat insertion}.
\begin{thm2}\label{thm: sum of ctypes}

    Let $u^{(1)}, u^{(2)}, \dots, u^{(l)}, w$ be
    permutations such that $\cc(w) \in \sh(\cc(u^{(1)}),\dots, \\ \cc(u^{(l)}))$.
    We have
    \[\ctype(w) \unrhd \ctype(u^{(1)}) + \ctype( u^{(2)}) + \cdots+ \ctype(u^{(l)}),\]
    where $\ctype(u^{(1)}) + \ctype( u^{(2)}) + \cdots +  \ctype(u^{(l)})$ is the partition given by the partwise sum of $\ctype(u^{(1)}),\ctype( u^{(2)}),\dots,\ctype(u^{(l)})$.
    \end{thm2}

We can see that Corollary \ref{cor: shuffles of cocharge words have correct ctype} follows from Theorem \ref{thm: sum of ctypes}. 
For any $u \in S_n$, we know $\ctype(u) \unrhd (1)^n$, since $(1)^{n}$ is the unique smallest partition of size $n$ with respect to dominance order. Hence, for permutations  $u^{(1)}, u^{(2)}, \dots, u^{(l)}, w$  where $u^{(j)}  \in S_{\mu^t_j}$ and $\cc(w) \in \sh(\cc(u^{(1)}),\dots, \cc(u^{(l)}))$, we have
\[\ctype(w) \unrhd \ctype(u^{(1)}) + \ctype( u^{(2)}) + \cdots+ \ctype(u^{(l)}) \unrhd (1)^{\mu^t_1} + (1)^{\mu^t_2} + \cdots + (1)^{\mu^t_j } = \mu.\]
Hence Theorem \ref{thm: sum of ctypes} gives an improved lower bound for the catabolizability type of a shuffle. 

We prove Theorem \ref{thm: sum of ctypes} by introducing a new modification of Algorithm \ref{alg: cat insertion} that keeps track of the difference between $\ctype(w)$ and $\ctype(u^{(1)}) +\cdots +\ctype(u^{(l)})$. 
We first construct our inputs for the algorithm. We restrict to the case where $l = 2$, but the same arguments generalize to larger $l.$
Let $u^{(1)}, u^{(2)},w$ be permutations such that $\cc(w)\in \sh(\cc(u^{(1)}),\cc(u^{(2)}))$. 
If we specify a shuffle of $\cc(u^{(1)})$ and $\cc(u^{(2)})$ that is equal to $\cc(w)$, we have that each letter in $\cc(w)$ corresponds uniquely to a letter in $\cc(u^{(1)})$ or $\cc(u^{(2)})$. Fix such a shuffle. 

We can create fillings $T^{w}_{u^{(1)}},T^w_{u^{(2)}}$ of shapes $\ctype(u^{(1)})$, $\ctype(u^{(2)})$ using Algorithm \ref{alg: cat insertion}. 
Similar to the modification we described in Section \ref{section:cat}, we fill the boxes as we add them to the partition. 
However, we now fill the box with tuples $(k,i)$. 
Consider the box we add to $T^{w}_{u^{(1)}}$ when reading the letter $\ctype(u^{(1)})_{m}$ for the $k$th time. There exists an $i$ such that $\ctype(u^{(1)})_{m}$ corresponds to $\cc(w)_{n-i+1}$ in the fixed shuffle. We fill the box with the tuple $(k,i)$. 

Note that the convention for the index we put in the box is different from what we used in Section \ref{section:descent basis}. 
The $(n-i + 1)$ will be useful when we define a total ordering on these entries.  
We are abusing notation here, since the fillings depend on how we shuffle the two cocharge words to get $\cc(w)$, not just $w$. 

\begin{ex}\label{ex: subfilling}
     Let $w = 5 \ \ 9 \ \ 1 \ \ 2 \ \ 6 \ \ 7 \ \ 10 \ \ 3 \ \ 8 \ \ 4.$
    Then  $\cc(w) = \textcolor{blue}{\textbf{1}} \ \ \textcolor{blue}{\textbf{2}} \ \ \textcolor{blue}{\textbf{0}} \ \ \textcolor{blue}{\textbf{0}} \ \ \textcolor{red}{1} \ \ \textcolor{blue}{\textbf{1}} \ \ \textcolor{blue}{\textbf{2}} \ \ \textcolor{red}{0} \ \ \textcolor{red}{1}\ \ \textcolor{red}{0} \in \sh (\textcolor{blue}{\textbf{120012}},\textcolor{red}{1010})$, where we have colored the entries to denote the shuffle we chose.  
    The resulting fillings  $T^w_{u^{(1)}},T^w_{u^{(2)}}$ are depicted in Figure \ref{fig: T_u1, T_u2}.
    \begin{figure}
        \centering
           \begin{align*} 
    \ytableausetup{textmode, boxframe=normal, boxsize=2em} T^w_{u^{(1)}}= \begin{ytableau}
\footnotesize(3,4) \\ \footnotesize(2,9)\\ \footnotesize(2,5) \\ \footnotesize(1,10)\\\footnotesize(1,7)& \footnotesize(1,8)
\end{ytableau}, \hspace{.2cm}
T^w_{u^{(2)}} = \begin{ytableau}
   \footnotesize(1,2) & \footnotesize(1,6) \\ \footnotesize(1,1) & \footnotesize(1,3)
\end{ytableau}.
    \end{align*}
    \caption{}
    \label{fig: T_u1, T_u2}
    \end{figure}
\end{ex}

From Lemma \ref{lemma:row is sum of cocharge label and mult}, we know if $(k,i)$ appears in row $r$ of $T_{u^{(1)}},$ $T_{u^{(2)}}$ we must have $k + cc(w)_{n-i + 1} = r.$
Furthermore, since the second coordinate of an entry records which letter in $\cc(w)$ created that box, for each $i \in \{1,\dots, n\}$, there exists a unique $k$ such that $(k,i)$ appears in $T^w_{u^{(1)}}$ or $T^w_{u^{(2)}}$. This $(k,i)$ appears exactly once.

We combine $T^w_{u^{(1)}},T^w_{u^{(2)}}$ to get a filling of shape $\ctype(u^{(1)}) + \ctype( u^{(2)})$, denoted  $(T_{u^{(1)}}+T_{u^{(2)}})^w$, by taking the row-wise sum of the two diagrams.
\begin{ex}
    We combine the two diagrams we get from Example \ref{ex: subfilling} to get a filling of $(2+2,1+2,1,1,1)$, depicted in Figure \ref{fig: T_u1+u_2}.
    \begin{figure}
        \centering
           \begin{align*} 
    \ytableausetup{textmode, boxframe=normal, boxsize=2em} (T_{u^{(1)}} + T_{u^{(2)}})^w = \begin{ytableau}
\footnotesize(3,4) \\ \footnotesize(2,9)\\ \footnotesize(2,5) \\ \footnotesize(1,10) &  \footnotesize(1,2) & \footnotesize(1,6)\\\footnotesize(1,7)& \footnotesize(1,8) & \footnotesize(1,1) & \footnotesize(1,3)
\end{ytableau}
    \end{align*}  
            \caption{}
            \label{fig: T_u1+u_2}
    \end{figure}
  
\end{ex}

\vspace{.5cm}
The lexicographic ordering ($\preceq$) on tuples $(a,b) \in \mathbb{Z}^2_{>0}$ is defined by:
\begin{align*}
    (a,b)\prec (c,d) & \Leftrightarrow \begin{cases}
        a < c &  \text{ or } \\ a = c, b <d.
    \end{cases}
\end{align*}
The lexicographic ordering of the tuples keeps track of the order in which we read the entries in Algorithm \ref{alg: cat insertion}. That is: if $(k_1,i_1)\preceq (k_2,i_2)$, then we read $\cc(w)_{n-i_1+1}$ for the $k_1$th time before we read  $\cc(w)_{n-i_2+1}$ for the $k_2$th time.  

Let $T^w_{u^{(1)}+ u^{(2)}}$ be the filling obtained by rearranging the entries within the rows of $(T_{u^{(1)}}+T_{u^{(2)}})^w$ so that the entries within the rows are increasing from left to right with respect to $\preceq$.
We have the following result:
\begin{lemma}\label{lemma:row column increase}
    The filling $T^w_{u^{(1)}+ u^{(2)}}$ satisfies the following conditions:

    \begin{enumerate}[label=(\roman*)]
        \item \label{cond: i} Each $i \in \{1,2,\dots, n\}$ appears in the second coordinate of an entry exactly once 
        \item \label{cond: ii}The entries in each row and column are increasing with respect to the lexicographic ordering on tuples,
        \item \label{cond: iii}For any entry $(k,i)$ in row $r$ of $T^w_{u^{(1)}+ u^{(2)}}$, we have $k + \cc(w)_{n-i+1} = r$.
\end{enumerate}
\end{lemma}
\begin{proof}
    The conditions (i) and (iii) hold by construction. It suffices to show the entries within each columns are increasing. 
    
    Let $(k,i)$ be the entry in the row $r$, column $j$ of $T^w_{u^{(1)}+ u^{(2)}}$, where $r>2$. We know $(k,i)$ is the $j$th smallest entry in row $r$, since we reordered within the rows so that the entries were increasing.
    For each entry in row $r$ of $T^w_{u^{(1)}+ u^{(2)}}$, there is an entry in row $r-1$ that appeared directly below it in $T^w_{u^{(1)}}$ or $T^w_{u^{(2)}}$. Since $T^w_{u^{(1)}}, T^w_{u^{(2)}}$ were constructed through the Algorithm \ref{alg: cat insertion}, we know that their columns are increasing with respect to the lexicographic ordering. 
    
    Thus for each entry $(k',i')$ in row $r$ such that $(k',i')\preceq (k,i)$, there is an entry in the row $(r-1)$ that was directly below $(k',i')$ in $T^w_{u^{(1)}}$ or $T^w_{u^{(2)}}$ which is smaller than $(k',i'$). 
    
    Therefore, if $(k,i)$ is the $j$th smallest entry in row $r$, there are at least $j$ many things in row $(r-1)$ that are smaller than $(k,i)$. Hence $(k,i)$ is larger than the entry directly below it.
\end{proof}

\begin{ex}
    Continuing with our example, we have
     \begin{align*} 
    \ytableausetup{textmode, boxframe=normal, boxsize=2em} T_{u^{(1)}+ u^{(2)}} = \begin{ytableau}
\footnotesize(3,4) \\ \footnotesize(2,9)\\ \footnotesize(2,5) \\   \footnotesize(1,2) & \footnotesize(1,6) & \footnotesize(1,10) \\\footnotesize(1,1) & \footnotesize(1,3)& \footnotesize(1,7)& \footnotesize(1,8) 
\end{ytableau}.
    \end{align*} 
    The entries are increasing within each row and column.
\end{ex}

We now define the notion of row insertion when dealing with these fillings. This is a modification of the classical row insertion used in RSK, except we also check the column increasing condition when inserting and we may modify more than one entry in the row, by sliding part of the row over to the right by one. 

\begin{alg}\label{alg: insertion} (Modified Row Insertion)
Consider a filling $T$ of a two row partition shape, where each entry is a tuple in $\mathbb{Z}^2_{>0}$ and the rows and columns are increasing with respect to the lexicographic ordering. 

Let $(a,b) \in \mathbb{Z}^2_{>0}$. We \textit{insert $(a,b)$} into the second row of this partition by doing the following.
We first find the leftmost $(c,d)$ in the second row that satisfies:
\begin{align}
        (a,b) & \preceq (c,d) \label{conditions: swap1},\\ (a,b) & \succeq (e,f)\label{conditions: swap2}
\end{align}
when $(e,f)$ denotes the entry directly below $(c,d)$.
If no such $(c,d)$ exists, we say $(a,b)$ \textit{pops out} of this row and we are done. 

If such $(c,d)$ exists, replace $(c,d)$ with $(a,b)$. The resulting filling is still increasing in the rows and columns. 
The only place where we could have a contradiction is directly to the left of $(a,b)$, but this cannot happen since $(c,d)$ is the leftmost entry that satisfies \eqref{conditions: swap1} and \eqref{conditions: swap2}.

We now repeat the steps above to insert $(c,d)$ into the same row. We continue until an entry pops out or we insert into the last box of the row. 

If we insert into the last box of the row, replacing $(g,h)$, we finish our insertion process by appending $(g,h)$ to the end of the second row if the resulting filling has increasing columns and is still a partition shape. This only happens if the second row was strictly shorter than the first. In this case, nothing pops out.
Otherwise, $(g,h)$ pops out and the insertion is complete. 

Note that once we successfully insert $(a,b)$, the entry $(c,d)$ gets inserted to the direct right of $(a,b)$ or it pops out. 
From this, we can see that this process is equivalent to determining where we can put $(a,b)$ and then moving the entries larger than it to the right by one until we find a contradiction with the bottom row or we reach the end of the row. 
\end{alg}

We illustrate this through the following examples. The first one results in an entry popping out. 
\begin{ex}\label{ex: row 2 insert 1}
We insert $(a,b) = (2,6)$ into the second row of the following partition:
    \begin{align*} 
    \ytableausetup{textmode, boxframe=normal, boxsize=2em} \begin{ytableau}
\footnotesize(2,4) & \footnotesize(3,2) & \footnotesize(3,3) & \footnotesize(4,1) \\\footnotesize(1,1) & \footnotesize(2,5) & \footnotesize(2,7) & \footnotesize(3,8) & \footnotesize(5,2)
\end{ytableau}
\end{align*}
Since $(3,2)$ is the leftmost entry satisfying \eqref{conditions: swap1} and \eqref{conditions: swap2}, we replace $(3,2)$ with $(2,6)$:
  \begin{align*} 
    \ytableausetup{textmode, boxframe=normal, boxsize=2em} \begin{ytableau}
   \none & \none[(3,2)] \\
\footnotesize(2,4) & \footnotesize\textbf{(2,6)} & \footnotesize(3,3) & \footnotesize(4,1) \\\footnotesize(1,1) & \footnotesize(2,5) & \footnotesize(2,7) & \footnotesize(3,8) &\footnotesize(5,2)
\end{ytableau}
\end{align*}
Now we repeat: we replace $(3,3)$ with $(3,2)$:
  \begin{align*} 
    \ytableausetup{textmode, boxframe=normal, boxsize=2em} \begin{ytableau}
   \none &\none & \none[(3,3)] \\
\footnotesize(2,4) & \footnotesize(2,6) & \footnotesize\textbf{(3,2)} & \footnotesize(4,1) \\\footnotesize(1,1) & \footnotesize(2,5) & \footnotesize(2,7) & \footnotesize(3,8) &\footnotesize(5,2)
\end{ytableau}
\end{align*}
However, we cannot replace $(4,1)$ with $(3,3)$ since $(3,8) \not\preceq (3,3)$. Hence $(3,3)$ pops out of this insertion and we are done.
\end{ex}

We also give an example of when nothing pops out of the insertion process. 
\begin{ex}\label{ex: row 2 insert 2}
    We insert $(a,b) = (2,6)$ into the second row of the following partition:
    \begin{align*} 
    \ytableausetup{textmode, boxframe=normal, boxsize=2em} \begin{ytableau}
\footnotesize(2,4) & \footnotesize(3,2) \\\footnotesize(1,1) & \footnotesize(2,5) & \footnotesize(3,1) 
\end{ytableau}
\end{align*}
Since $(3,2)$ is the leftmost entry satisfying \eqref{conditions: swap1} and \eqref{conditions: swap2}, we replace $(3,2)$ with $(2,6)$:
  \begin{align*} 
    \ytableausetup{textmode, boxframe=normal, boxsize=2em} \begin{ytableau}
   \none & \none[(3,2)] \\
\footnotesize(2,4) & \footnotesize\textbf{(2,6)} \\\footnotesize(1,1) & \footnotesize(2,5) & \footnotesize(3,1) 
\end{ytableau}
\end{align*}
Now, we can append $(3,2)$ to the end of row 2 while maintaining the column increasing condition.
Adding a box to the second row does not change the fact that the shape is a two row partition. Hence we insert $(3,2)$ into the empty spot. 
 \begin{align*}
    \ytableausetup{textmode, boxframe=normal, boxsize=2em} \begin{ytableau}
\footnotesize(2,4) & \footnotesize(2,6) & \footnotesize\textbf{(3,2)} \\\footnotesize(1,1) & \footnotesize(2,5) & \footnotesize(3,1) 
\end{ytableau}
\end{align*}
Note that nothing pops out at the end of this insertion.
\end{ex}

We make the following observation.
\begin{lemma}\label{lemma: insertion boosts}
 If we insert $(a,b)$ into row 2 of $T$ and $(x,y)$ pops out, we have $(a,b)\preceq (x,y)$.
\end{lemma}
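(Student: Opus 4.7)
The plan is to track the chain of displaced entries that Algorithm \ref{alg: insertion} produces during the insertion of $(a,b)$ and show that it is weakly increasing in the lexicographic ordering, which immediately yields the claim. I would set $(a_0,b_0) := (a,b)$, and at each step $i$ at which the insertion has not yet terminated, define $(a_{i+1},b_{i+1})$ to be the leftmost entry $(c,d)$ of row 2 satisfying conditions \eqref{conditions: swap1} and \eqref{conditions: swap2} relative to $(a_i,b_i)$. This is precisely the entry the algorithm displaces and attempts to re-insert.

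The key observation is that condition \eqref{conditions: swap1} is exactly the statement $(a_i,b_i) \preceq (a_{i+1},b_{i+1})$. By transitivity of $\preceq$, I then obtain a chain
\[(a,b) = (a_0,b_0) \preceq (a_1,b_1) \preceq \cdots \preceq (a_k,b_k)\]
at every stage $k$ of the insertion. So any tuple that appears in this chain is lexicographically $\succeq (a,b)$.

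To finish, I would verify that the popped element $(x,y)$ equals some $(a_j,b_j)$ in this chain by enumerating the two termination cases of Algorithm \ref{alg: insertion}. Either the current element $(a_i,b_i)$ fails to find any position in row 2 meeting \eqref{conditions: swap1} and \eqref{conditions: swap2}, in which case $(x,y) = (a_i,b_i)$; or the current element is inserted into the last box of the row when the two rows have equal length, popping out $(x,y) = (a_{i+1},b_{i+1})$. In both cases $(x,y)$ appears in the chain, so $(a,b) \preceq (x,y)$.

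I do not anticipate a serious obstacle here; the lemma is essentially a direct consequence of condition \eqref{conditions: swap1} propagated through the iterations of the algorithm. The only care needed is to handle both termination modes so that $(x,y)$ is always identified as an element of the displacement chain.
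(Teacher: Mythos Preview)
Your proposal is correct and follows essentially the same approach as the paper. The paper's proof is just the two-sentence observation that if $(a,b)$ cannot be inserted then $(x,y)=(a,b)$, and otherwise each replacement swaps a larger entry for a smaller one; your version simply makes this chain explicit and handles the termination cases in more detail.
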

\begin{proof}
    If we are unable to insert $(a,b)$, then $(x,y) = (a,b)$. Otherwise, the insertion replaces larger entries with smaller ones. 
\end{proof}

We can extend this definition of row insertion to any partition shape tableau $T$ filled with tuples in $\mathbb{Z}^2_{>0}$ where the rows and columns are increasing. For any row $r>1$ of $T$ and $(a,b)\in \mathbb{Z}^2_{>0}$, we define $T\leftarrow_{r} (a,b)$ to be the result of inserting $(a,b)$ into row $r$ of $T$. Unlike RSK, we only modify row $r$; we do not continue inserting into higher rows. It is a simple check to see that the resulting filling $T\leftarrow_{r} (a,b)$ is still increasing in the rows and columns.

\begin{lemma}\label{lemma: inserting larger things higher}
    Consider partition shape tableau $T$ filled with tuples in $\mathbb{Z}^2_{>0}$ where the entries within the rows and columns are increasing. Consider two tuples $(a,b) \preceq (a',b')$.
    Let $T\leftarrow_r (a,b)$ denote the tableau we get by inserting $(a,b)$ into row $r$ of $T$, where $r$ is not the top row. 
    \begin{enumerate}
        \item Assume $(x,y)$ pops out when we insert $(a,b)$ into row $r$ of T. If $(x',y')$ pops out when we insert $(a',b')$ into row $(r+1)$ of $T\leftarrow_r (a,b)$, then $(x,y)\preceq (x',y')$.
        \item If nothing pops out when we insert $(a,b)$ into row $r$ of $T$, then nothing pops out when we insert $(a',b')$ into row $(r+1)$ of $T\leftarrow_r (a,b)$.
        \end{enumerate}
\end{lemma}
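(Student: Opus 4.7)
The plan is to analyze both insertion chains explicitly and compare them position-by-position, using $(a,b) \preceq (a',b')$ together with the column-increasing property of $T$. Denote the row~2 chain by $\alpha_0, \alpha_1, \ldots, \alpha_m$ where $\alpha_0 = (a,b)$ and for $i \geq 1$ the entry $\alpha_i$ is the entry originally at position $j_0 + i - 1$ of row~2 that gets displaced when $\alpha_{i-1}$ is placed there, with $j_0$ the leftmost valid swap position; in part~(1) we have $\alpha_m = (x,y)$ popping out, while in part~(2) the chain terminates by appending. Define the row~3 chain analogously with starting position $j'_0$, so $\beta_0 = (a',b')$ and $\beta_1, \ldots, \beta_{m'}$, where $\beta_{m'} = (x',y')$ in part~(1). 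Since each successor originally sat strictly to the right of its predecessor in an increasing row, we get $\alpha_0 \preceq \alpha_1 \prec \cdots \prec \alpha_m$ and $\beta_0 \preceq \beta_1 \prec \cdots \prec \beta_{m'}$, so $(x,y)$ and $(x',y')$ are the maxima of their respective chains.

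For part~(1), set $\ell = j_0 + m - 1$ and $\ell' = j'_0 + m' - 1$, the original positions of $(x,y)$ in row~2 of $T$ and of $(x',y')$ in row~3 of $T$. The plan is to prove $j'_0 \leq \ell \leq \ell'$; the conclusion then follows because the entry at position $\ell$ of row~3 (which exists as $\ell \leq \ell' \leq n_3$) is strictly greater than $(x,y)$ by column-increasing in $T$, and is $\preceq (x',y')$ by row-increasing in row~3, giving $(x,y) \prec (x',y')$. The first inequality $j'_0 \leq \ell$ is established by exhibiting a valid starting column $\leq \ell$ for the $\beta$-chain: at column $\ell$ of $T \leftarrow_2 (a,b)$ the entry directly below row~3 is $\alpha_{m-1}$, and the relevant orderings $\alpha_{m-1} \preceq (a',b')$ and $(a',b')\preceq$ (the row~3 entry at column $\ell$) hold in the generic case, with short separate arguments when $(a',b') \succeq (x,y)$ (which immediately yields $(x',y') \succeq (a',b') \succeq (x,y)$) or when $\alpha_{m-1} \succ (a',b')$ (which forces $j'_0 < j_0$ via the unmodified portion of row~2). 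The second inequality $\ell \leq \ell'$ then follows because the $\beta$-chain, once started at $j'_0 \leq \ell$, extends past column $\ell$ without a column failure; each column condition along $[j'_0, \ell]$ for the $\beta$-chain against row~2' is no harder than the corresponding column condition satisfied by the $\alpha$-chain against row~1, via $(a,b) \preceq (a',b')$ and the fact that row~2' has smaller entries than row~2 at the modified positions.

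For part~(2), successful completion of the row~2 chain means row~2' has length $n_2 + 1 \leq n_1$, where $n_r$ is the length of row~$r$ of $T$. Since $T$ is partition-shaped, $n_3 \leq n_2 < n_2 + 1$, so appending at the end of row~3 cannot break partition shape, ruling out length-based pop-out. The remaining ways of popping out, namely an immediate pop-out of $(a',b')$ at the start, or a column failure mid-chain, are ruled out by the same monotonicity argument as in part~(1): each column condition the $\alpha$-chain satisfied against row~1 translates, via $(a,b) \preceq (a',b')$ and the fact that row~2' entries at modified positions are $\preceq$ the originals of row~2, into the corresponding column condition of the $\beta$-chain against row~2'. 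The principal obstacle in both parts is the careful bookkeeping needed to align the $\alpha$ and $\beta$ chains by position and handle the relative orderings of $j_0, j'_0, \ell, \ell'$; once this is set up, the comparisons reduce to routine lexicographic monotonicity.
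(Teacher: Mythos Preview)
Your approach is essentially the same as the paper's: both hinge on the observation that once the $\beta$-chain reaches the columns $[j_0,\ell]$, shifting row~3 entries one step to the right simply realigns them with their original partners in row~2 of $T$, so the column conditions there are automatic. You organize this by the positional inequality $j'_0 \le \ell \le \ell'$, while the paper organizes it by cases on where $(a',b')$ falls relative to the row~3 entries $(g,h)$ (above column $j_0$) and $(p,q)$ (above column $\ell$); the content is the same.

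One point in your case analysis is not correct as stated. You write that $\alpha_{m-1}\succ(a',b')$ ``forces $j'_0<j_0$ via the unmodified portion of row~2.'' In fact it need not force $j'_0<j_0$; what it does force is $j'_0<\ell$, and the valid swap position lies in the \emph{modified} segment $[j_0,\ell)$, not the unmodified one. Concretely, since $\alpha_0=(a,b)\preceq(a',b')\prec\alpha_{m-1}$, choose $i$ with $\alpha_i\preceq(a',b')\prec\alpha_{i+1}$; at column $j_0+i$ the row~$2'$ entry is $\alpha_i\preceq(a',b')$ and the row~3 entry is $\succeq\alpha_{i+1}\succ(a',b')$, so both swap conditions hold there. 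This gives $j'_0\le j_0+i<\ell$, which is what you actually need. The paper's case split on $(g,h)$ and $(p,q)$ avoids this slip by phrasing the trichotomy in terms of row~3 values rather than the $\alpha$-chain. With that correction, your outline goes through, and part~(2) is handled identically in both arguments.
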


\begin{proof}

(1) Note that if $(x,y) = (a,b)$, the claim follows immediately.
Otherwise, let $(p,q)$ denote the entry in row $(r+1)$ directly above $(x,y)$ in $T$. Note that the entry $(p,q)$ may not exist: in that case, assume $(p,q)$ to be the empty box at the end of row $(r+1$).
Let $(g,h)$ denote the entry in row $(r+1)$ directly above $(a,b)$ in $T\leftarrow_r (a,b)$.

We can see that rows $r,r+1$ of $T$ look like the following:
  \ytableausetup{mathmode, boxframe=normal, boxsize=2.75em}
\begin{align*}
  T:   \hspace{1cm} \begin{ytableau}
*(white) & \footnotesize(g,h) & *(white)\cdots & *(white)  \footnotesize(p,q) & *(white)\\  
*(white) & *(gray) & *(gray)\cdots & \footnotesize(x,y) & *(white)
\end{ytableau}.
\end{align*}

We can visualize $T\leftarrow_r (a,b)$ in the following way. 
First, we move the shaded boxes in row $r$, which are the boxes from the one directly below $(g,h)$ up to (but not including) $(x,y)$, over one to the right. Then we put $(a,b)$ directly below $(g,h)$ which pops out $(x,y)$. 

Rows $r,r+1$ of $T\leftarrow_r (a,b)$ look like the following:
\begin{align*}
   T \leftarrow_r (a,b): \hspace{1cm} \begin{ytableau}
*(white) & \footnotesize(g,h) & *(white)\cdots  & *(white)\footnotesize(p,q) & *(white)\\  
*(white) & \footnotesize(a,b) & *(gray)\cdots &  *(gray) & *(white)
\end{ytableau}.
\end{align*}

Now, we insert $(a',b')\succeq (a,b)$ into row $(r+1)$ of  $T\leftarrow_r (a,b)$. Let $(x',y')$ be the entry that pops out of this insertion, if such entry exists. To show $(x,y)\preceq (x',y')$, we use the fact that $(x,y)\preceq (p,q)$. It suffices to show $(p,q)\preceq (x',y')$.

 We proceed by cases. First, note that if $(p,q)\preceq (a',b') $, the claim automatically holds since $(a',b')\preceq (x',y')$ (with potential equality).

The second case we consider is when $(g,h) \preceq (a',b')\preceq (p,q)$. In this case, we can always insert $(a',b')$ between $(g,h)$ and $(p,q)$.
Consider $ (g_1,h_1) \preceq (g_2,h_2)$ that appear consecutively in row $r$ of $T\leftarrow_r (a,b)$ such that $(g,h) \preceq (g_1,h_1) \preceq (a',b') \preceq (g_2,h_2)\preceq (p,q)$. This is equivalent to saying that in $T\leftarrow_r (a,b)$, we have the following configuration:
 \ytableausetup{mathmode, boxframe=normal, boxsize=3em}
\begin{align*}
   T \leftarrow_r (a,b): \hspace{1cm}  \begin{ytableau}
*(white) & \footnotesize(g,h) & *(white)\cdots & \footnotesize(g_1,h_1) &  *(yellow)\footnotesize(g_2,h_2) & *(yellow)  \cdots & *(white)  \footnotesize(p,q) & *(white)\\  
*(white) & \footnotesize(a,b) & *(gray)\cdots & *(gray) & *(gray)\footnotesize(s_1,t_1)& *(gray)\footnotesize(s_2,t_2)&*(gray)  & *(white)
\end{ytableau}.
\end{align*}

Since we know that all the shaded entries in row $r$ were shifted 1 to the right when inserting $(a,b)$ into row $r$ of $T$.we know that the entry $(s_1,t_1)$ that appears directly below $(g_2,h_2)$ was originally below $(g_1,h_1)$ in $T$.
Thus $(s_1,t_1) \preceq (g_1,h_1) \preceq (a',b') \preceq (g_2,h_2)$, which means we can replace $(g_2,h_2)$ with $(a',b')$.




We can also move all the entries from $(g_2,h_2)$ up to (but not including) $(p,q)$ over to the right by one, because this is just equivalent to realigning rows $r$ and $(r+1)$ of $T\leftarrow_r (a,b)$ to how they were in $T$.
In particular, we can place $(g_2,h_2)$ on top of $(s_2,t_2)$, since we know $(s_2,t_2)\preceq (g_2,h_2)$.
The result of continuing the insertion up to $(p,q)$ is the following configuration:
\begin{align*}
    \begin{ytableau}
    \none & \none & \none & \none & \none & \none &  \none[(p,q)]\\  
*(white) & \footnotesize(g,h) & *(white)\cdots & \footnotesize(g_1,h_1) & \footnotesize(a',b') &  *(yellow)\footnotesize(g_2,h_2) & *(yellow)  \cdots  & *(white)\\  
*(white) & \footnotesize(a,b) & *(gray)\cdots & *(gray) & *(gray)\footnotesize(s_1,t_1)& *(gray)\footnotesize(s_2,t_2)&*(gray)  & *(white)
\end{ytableau}.
\end{align*}

At this point, either we are able to continue by inserting $(p,q)$ into row $(r+1)$, or $(p,q)$ pops out.
From this, we know the smallest possible thing that could pop out is $(p,q)$. Thus $(p,q)\preceq (x',y')$.  

Now, we consider the case where $(a',b')\preceq (g,h)$. 
We know $(a',b')$ can be inserted in some spot to the left of $(g,h)$ by just checking \eqref{conditions: swap1}: the column condition follows automatically since $(a,b) \preceq (a',b')$, thus $(a',b')$ is larger than any entry to the left of $(a,b)$ in row $r$. Let $(c,d)$ be the leftmost entry in row $(r+1)$ such that $(a',b')\preceq (c,d)$.

\begin{align*}
      T \leftarrow_r (a,b): \hspace{1cm}    \begin{ytableau} 
*(white) &  *(yellow)\footnotesize{(c,d)} & *(yellow)\dots & \footnotesize(g,h) &  \cdots & (p,q) & *(white)\\*(white) &  *(white) &   
*(white) & \footnotesize(a,b) & *(gray)\cdots & *(gray) & *(white)
\end{ytableau}.
\end{align*}

We replace $(c,d)$ with $(a',b')$. Now, any entry between $(c,d)$ and $(g,h)$ (which correspond to the boxes colored yellow), must also be larger than $(a,b)$, so we can keep inserting until we have replaced $(g,h)$. This gives us the following configuration:
\begin{align*}
    \begin{ytableau} 
    \none & \none & \none & \none[(g,h)] \\
*(white) & \footnotesize{(a',b')} & *(yellow)\dots & *(yellow) &  \cdots & (p,q) & *(white)\\*(white) &  *(white) &   
*(white) & \footnotesize(a,b) & *(gray)\cdots & *(gray) & *(white)
\end{ytableau}.
\end{align*}

The next step is to insert $(g,h)$ into row $(r+1)$: we can see that this is exactly the second case we considered. Thus, from the argument above, we know the insertion process will continue until we reach $(p,q)$ and  the claim holds. 

(2) The same argument holds: if such $(x,y)$ does not exist, then $(p,q)$ does not exist. Thus the two cases we consider are $(a',b')\preceq (g,h)$ and $(g,h) \preceq (a',b')$, which correspond to the first two cases in the previous argument. We can keep inserting into row $(r+1)$ until we add a box at the end of row $(r+1)$, which we know we can do since row $r$ of $T\leftarrow_r (a,b)$ is one longer than row $r$ of $T$.  
\end{proof}

Now, we use this new notion of insertion to define a modification of Algorithm \ref{alg: cat insertion} that keeps track of the difference between $\ctype(w)$ and $\ctype(u^{(1)})+ \cdots + \ctype(u^{(l)})$. 
The input $(\cc(w), \emptyset, T^w_{u^{(1)}, \dots, u^{(l)}})$ depends on the shuffle of $\cc(u^{(1)}),\dots, \cc( u^{(l)})$ that is equal to $\cc(w)$ that we fix. Throughout this algorithm, we can think of the filling $T$ in the third coordinate of our tuple as keeping track of the ``lower bounds'' of insertion: that is, if we read a certain entry in $T$, we must add a box there. However, we may add boxes earlier, which is when we perform the chains of insertions.  The final result in the third coordinate will be a tableau $T_w$ with $\shape(T_w) = \ctype(w)$.

We include an example of this algorithm in the Appendix.

\begin{alg}\label{alg:chains}
    The input is $(\cc(w),\emptyset, S)$, where $S$ is a filling of a partition shape that satisfies the conditions \ref{cond: i}  -- \ref{cond: iii} in Lemma \ref{lemma:row column increase}. For example, we can take $S$ to be $T^w_{u^{(1)}, \dots, u^{(l)}}$.
    
    As we did for Algorithm \ref{alg: cat insertion}, we apply the function $f$ repeatedly to the first two coordinates until we get $(\emptyset, \ctype(w))$ in the first two coordinates. We modify the shape of the filling in the last coordinate whenever we add a box to the second coordinate, so that the new shape is larger in dominance order.

    At each step, we say we \textit{read $(k,i)$} if we read the letter corresponding to $\cc(w)_{n-i+1}$ for the $k$th time.
    After each step of the algorithm, we get a triple $(z, \nu , T)$ where $z$ is a word, $\nu$ is some partition, and $T$ is a tableau of partition shape.

    Consider the step when we read $(k,i)$, with  input $(z,\nu,T)$, which results in adding a box to row $r$ of $\nu$. Let $(\tilde{z}, \nu + \epsilon_{r})$ be the new first and second coordinate of our tuple. 
    
    By condition \ref{cond: i}, we know there exists a $(m,i)$ that appears in $T$.  If $k=m$, we do not modify the filling $T$ and the new tuple we consider is $(\tilde{z},\nu+\epsilon_{r}, T)$.
    Note that by condition \ref{cond: iii}, we know that $(k,i)$ appears in row $r$ of $T$ (To see an example, see \eqref{appendix: ex1} in the appendix).

    However, if $(m,i)$ is in row $r'$ where $r<r'$, we will create a new tableau $\tilde{T}$ of partition shape in the following way. We first look at $T$, where $\star$ denotes the box containing $(m,i)$:
    \begin{align*}
        \ytableausetup{mathmode,smalltableaux}
     \begin{ytableau}
      \none & \none & *(white) & *(white)& *(white) \\ \none & \none & *(white) & *(white)& *(white) \\\none &  \none & *(white) & *(white)& *(white)& *(white) \\ \none[\footnotesize{\text{row r'}}] & \none & *(white) & *(white)\star & *(white)& *(white)& \\ \none & \none & *(white) & *(white) & *(white)& *(white)& *(white)& *(white)& *(white)\\ \none[\footnotesize{\text{row r}}] & \none &*(white) & *(white)& *(white)& *(white)& *(white)& *(white)& *(white) \\ \none & \none & *(white) & *(white)& *(white)& *(white)& *(white)& *(white)& *(white)& *(white)& *(white) \\ 
    \end{ytableau}.
    \end{align*}

    Delete the box in $T$ containing $(m,i)$ as well as the ones directly above it in $T$.
    This results in deleting entries $\{(m,i),(m_1,i_1),\dots, (m_s,i_s)\}$, where $(m_t,i_t)$ denotes the entry that was $t$ boxes above $(m,i)$ and $s$ is the total number of boxes that appear above $(m,i)$ in $T$. 
    The resulting diagram looks like this:

        \begin{align*}
        \ytableausetup{mathmode,smalltableaux}
        \begin{ytableau}
        \none & \none & *(white) & \none & *(white)\\
        \none & \none & *(white) & \none & *(white)\\ \none &  \none & *(white) & \none[] & *(white)& *(white) \\ \none[\footnotesize{\text{row r'}}] & \none & *(white) & \none  & *(white)& *(white)& \\ \none & \none & *(white) & *(white) & *(white)& *(white)& *(white)& *(white)& *(white)\\ \none[\footnotesize{\text{row r}}] & \none &*(white) & *(white)& *(white)& *(white)& *(white)& *(white)& *(white) \\ \none & \none & *(white) & *(white)& *(white)& *(white)& *(white)& *(white)& *(white)& *(white)& *(white) \\ 
    \end{ytableau} \hspace{1cm} \text{ removed entries: }\{(m,s), (m_1,i_1), (m_2,i_2), (m_3,i_3)\}
    \end{align*}

    Note that since the entries are increasing within the columns of $T$, we have
    \begin{align}\label{eq: insertion is increasing1}
        (m,i) \preceq (m_1,i_1)\preceq \cdots \preceq (m_s,i_s)
    \end{align}
 
    The rows and columns are still increasing with respect to the lexicographic ordering if we ignore the empty spaces in the column that contained $\{(m,i),(m_1,i_1),\dots, (m_s,i_s)\}$. 
    
    We will create a new partition shape filling of $n$ boxes by inserting the entries we removed back into this filling.
    
   Let $d = r'-r$.  We first insert $(m - d, i)= (k,i)$ into row $r$ using Algorithm \ref{alg: insertion}.  Note that we use $(m-d,i)$, rather than $(m,i)$, to maintain condition \ref{cond: iii}. This change does not change the fact that the entries in the filling satisfy condition \ref{cond: i}. 

    If an entry pops out, we denote that entry by $(x_0,y_0)$. Now, we continue this process, inserting $(m_1-d, i_1)$ into row $r+1$.
  If some entry $(x_1,y_1)$ pops out, we know $(x_0,y_0)\preceq (x_1,y_1)$ from Lemma \ref{lemma: inserting larger things higher} (1) since $(m-d,i)\preceq (m_1-d, i_1)$.
      
     In this way, we insert $(m_1-d,i_1,),\dots, (m_s-d, i_s)$ into rows $r+1,\dots, r+s$.

    \begin{align*}
        \begin{ytableau}
        \none & \none & \none & \none & *(white) & \none & *(white)\\ 
        \none & \none & \none & \none & *(white) & \none & *(white)\\  \none &\none[\scriptscriptstyle (m_3-d,i_3)\to] & \none  & \none & *(white) & \none & *(white)& *(white) \\   \none &\none[\scriptscriptstyle (m_2-d,i_2)\to] & \none  & \none  & *(white) & \none  & *(white)& *(white)& \\ \none &\none[\scriptscriptstyle (m_1-d,i_1)\to] & \none  & \none  & *(white) & *(white) & *(white)& *(white)& *(white)& *(white)& *(white)\\  \none &\none[\scriptscriptstyle (m-d,i)\to] & \none  & \none &*(white) & *(white)& *(white)& *(white)& *(white)& *(white)& *(white) \\ \none & \none & \none & \none & *(white) & *(white)& *(white)& *(white)& *(white)& *(white)& *(white)& *(white)& *(white) \\ 
    \end{ytableau} .
    \end{align*}

     If we insert into a row with a gap in the middle, we ensure that we maintain the empty spot by skipping over it in the insertion process. That is: we never insert in the empty spot in the middle of the row. 

 By Lemma \ref{lemma: inserting larger things higher} (2) and \eqref{eq: insertion is increasing1}, we know that once we have an insertion where nothing pops out, we will never have anything pop out for the later insertions as well.  Let $j\leq s$ be the index such that inserting $(m_j -d, i_j)$ is the last insertion where some entry $(x_j, y_j)$ pops out.
The elements that pop out at each step have the following relation:
    \begin{align}\label{ineq: chains}
        (k,i)\preceq (x_0,y_0)\preceq (x_1,y_1)\preceq \dots \preceq (x_{j},y_{j}),
    \end{align}
where either $j =s$ or nothing popped out when inserting $(m_{j+1} -d, i_{j+1})$ into row $r+j+1$.

For example, if we assume $j = 2$ in our example, we get:

 \begin{align*}
        \begin{ytableau}
        *(white) & \none & *(white)\\ 
         *(white) & \none & *(white) & \none & \none & \none & \none \\   *(white) & \none & *(white)& *(white) & *(white) & \none  & \none \\  *(white) & \none  & *(white)& *(white)&  *(white)& \none &  \none & \none  & \none & \none & \none &\none[\scriptscriptstyle \to (x_2,y_2)]  \\  *(white) & *(white) & *(white)& *(white)& *(white)& *(white)& *(white) & \none & \none  & \none & \none  &\none[\scriptscriptstyle \to (x_1,y_1)] \\   *(white) & *(white) & *(white)& *(white)& *(white)& *(white)& *(white) & \none & \none  & \none & \none  &\none[\scriptscriptstyle \to (x_0,y_0)]  \\ *(white) & *(white)& *(white)& *(white)& *(white)& *(white)& *(white)& *(white)& *(white)
    \end{ytableau} .
    \end{align*}

Once we have inserted up to $(m_s-d, i_s)$, we still have an empty column where $(m,i)$ used to be and above. 

Since the insertion algorithm replaces entries with smaller ones, we know that the entry below the empty space in row $r'$ must still be smaller than $(m,i)$.
If there exists an entry $(x_0,y_0)$ that popped out of row $r$, we have the following inequalities using \eqref{ineq: chains}:
    \begin{align}\label{ineq: chains2}
        (k+d,i) = (m,i)\preceq (x_0 +d,y_0)\preceq (x_1 + d,y_1)\preceq \dots \preceq (x_j+d ,y_j).
    \end{align}
Thus we can fill the empty column with the entries $(x_0 +d,y_0),(x_1 + d,y_1), (x_j+d ,y_j)$ and preserve the column increasing condition for all the columns.  

 \begin{align*}
        \begin{ytableau}
        *(white) & \none& *(white)\\ 
         *(white) & \circ & *(white)  & \none & \none & \none \\   *(white) & \diamond  & *(white)& *(white) & *(white)  & \none  & \none \\  *(white) & \ast  & *(white)& *(white)& *(white) & \none  \\  *(white) & *(white) & *(white)& *(white)& *(white)& *(white)& *(white)\\ *(white) & *(white)& *(white)& *(white)& *(white)& *(white)& *(white)   \\ *(white) & *(white)& *(white)& *(white)& *(white)& *(white)& *(white)& *(white)& *(white) \\ 
    \end{ytableau} \text{ where } \ast = (x_0,y_0), \diamond = (x_1,y_1), \circ = (x_2,y_2).
    \end{align*}

Now that we have put all the entries back into the filling, we rearrange the entries within rows $r'$ and above so that the entries within the rows are increasing and we have a partition shape with no gaps.
Let the resulting diagram be $\tilde{T}$. We can use the same argument used in the proof of Lemma \ref{lemma:row column increase}(ii) to show that the $\tilde{T}$ is increasing within the rows and columns, since the filling we had before rearranging within the rows was column strict. Using this, we know that $\tilde{T}$ still satisfies conditions \ref{cond: i}-- \ref{cond: iii}.
In this case, the new tuple we consider is $(\tilde{z},\nu+\epsilon_{r}, \tilde{T})$.

We proceed by reading the next letter in $\tilde{z}$. We continue this process until the first coordinate of our tuple is the empty word.
    \end{alg}

Algorithm \ref{alg:chains} satisfies the following properties.  When we are reading an entry $(k,i)$, we say that smaller than $(k,i)$ in the lexicographic order has \textit{already been read}. 
    \begin{lemma}\label{lemma: props of the alg}
    Assume we apply Algorithm \ref{alg:chains} with intial input $(\cc(w), \emptyset, S)$.
        Each intermediate tuple $(z, \nu, T)$ must  satisfy the following conditions:
    \begin{enumerate}[label=(\alph*)]
        \item As partitions, we have $\shape(T)\unrhd \shape(S)$, where $S$ is the filling in the original input.
        \item A tuple $(k,i)$ that appears in $T$ has already been read if and only if it is contained in the subdiagram of $T$ of shape $\nu$.
        \item A tuple $(k,i)$ that appears in $T$ has not been read if and only if the letter corresponding to $\cc(w)_{n-i+1}$ has not been deleted from the input word. 
    \end{enumerate}
    \end{lemma}
    \begin{proof}
    Note that by assumption, we have that $T$ satisfies the conditions \ref{cond: i}--\ref{cond: iii}. 
    We proceed by induction. Note that the statement is true for the initial filling $S$ before we read $(1,1)$. We proceed by cases.

    Assume we read $(k,i)$ but cannot add a box to $\nu$. By \ref{cond: iii}, we know that there exists a tuple $(m,i)\in T$ . By induction, we know that $(m,i)$ cannot have been read yet by (c), since it corresponds to an entry that has not been deleted from the input word. Thus $m\geq k$.
    Furthermore, if $m=k$, we would have that the box below $(k,i)$ in $T$ must be contained in the subdiagram of shape $\nu$ by (b) and \ref{cond: ii}, since it must have already been read. However, this would imply that we would be able to add a box to the diagram $\nu$, which is a contradiction. Thus $k<m$. Since this does not affect any of the conditions, we have that (a)--(c) are true for this tuple.

    Now we consider the case where reading $(k,i)$ results in adding a box to row $r$ of $\nu$ but does not change the filling $T$.
    This does not affect condition (a).     
    In this case, we know $(k,i)$ appears in row $r$ of $T$. Since we assume that $T$ satisfied conditions (a)--(c) before we read $T$, we know that $(k,i)$ is the smallest entry in row $r$ of $T$ that is not contained in the subdiagram of shape $\nu$. This implies that the box in $T$ containing $(k,i)$ is exactly the new box in $\nu + \epsilon_r$, thus (b) and (c) hold as well. 

    Finally, we consider the case where reading $(k,i)$ results in creating a new filling $\tilde{T}$. 

    In this case, we know that $(m,i)$ appears in row $r'>r$ of $T$.
    This means we delete $(m,i)$ (and all the entries above it) and then insert $(k,i)$ into row $r$. Note that from (b) on $T$, we know that none of the entries we delete can be contained in the subdiagram of $T$ of shape $\nu$, since all the entries we delete are larger than $(m,i)$. 

   Let $(x,y)$ be the smallest, unread entry in row $r$ of $T$. From condition (c) and the fact that reading $(k,i)$ results in adding a box to row $r$ of $\nu$, it follows that the box under $(x,y)$ must be contained in the subdiagram of shape $\nu$. Thus the entry below $(x,y)$ has already been read. Furthermore, since $(k,i)$ is the smallest unread entry, we have that $(k,i)\preceq (x,y)$. Thus inserting $(k,i)$ into row $r$ results in replacing $(x,y)$ with $(k,i)$. Note that this box in $\tilde{T}$ is in the same position as the new box we add to $\nu$. 

   Note that from equations \eqref{eq: insertion is increasing1}, we know the insertion process only involves entries that are larger than $(k,i)$, thus the entries in the subdiagram of shape $\nu$ are fixed. 
   Furthermore, we can see that when we rearrange the rows after the insertion so that the final shape is partition, we do not touch any of the entries in the subdiagram of shape $\nu+\epsilon_r$. Hence condition (b), (c) hold.

Finally, we can see this insertion process corresponds to moving entries in $T$ to lower rows. Thus we have condition (1).


    \end{proof}

\begin{cor}\label{cor: output fillling}
   When applying Algorithm to $(\cc(w), \emptyset, S)$, we have that the final output is of the form $(\emptyset, \ctype(w), T_w) $ where  $\shape(T_w) = \ctype(w)$.
\end{cor}
\begin{proof}
We know that the final tuple $(\emptyset, \ctype(w), T_w) $ must satisfy Lemma \ref{lemma: props of the alg} (a) --(c).
By (c), we know that all the entries in $T_w$ must be read. Then, by condition (2), we have $\ctype(w) = \shape(T_w)$.
\end{proof}

Theorem \ref{thm: sum of ctypes} immediately follows from this algorithm.
\begin{proof}[Proof of Theorem \ref{thm: sum of ctypes}] 
We apply Algorithm \ref{alg:chains} to the initial input $(\cc(w), \emptyset, T^w_{u^{(1)}, \dots, u^{(l)}})$. 
From Lemma \ref{lemma: props of the alg}, at any point in the algorithm,  where the tuple is of the form $(z,\nu,T)$, we have  \[\ctype(u^{(1)}) + \cdots +\ctype(u^{(l)})  = \shape (T^w_{u^{(1)},\dots,  u^{(l)}}) \unlhd \ \shape(T).\] From Corollary \ref{cor: output fillling}, it follows that $\ctype(u^{(1)}) + \cdots+ \ctype( u^{(l)})\unlhd \shape (T_w) = \ctype(w)$.    
\end{proof}

    We can also use this modified algorithm to prove the following statement about how certain modifications to permutations raise the catabolizability type.
    
\begin{prop}\label{prop: swaps raise ctype}
   Consider $w\in S_n$ and index $i$ such that $w_i  +1 < w_{i+1}$. If $\tilde{w} = w_1\dots w_{i+1}w_i\dots w_n$, we have $\ctype(\tilde{w}) \unrhd \ctype(w)$.
\end{prop}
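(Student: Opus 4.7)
The plan is to construct a tuple-valued filling $S$ of shape $\ctype(w)$ that serves as a valid input to Algorithm \ref{alg:chains} for the word $\cc(\tilde w)$, and then run the algorithm to conclude $\ctype(\tilde w) = \shape(T_{\tilde w}) \unrhd \shape(S) = \ctype(w)$ via invariant (3). First I would compare $\cc(w)$ and $\cc(\tilde w)$. Writing $a = w_i$ and $b = w_{i+1}$, the hypothesis $a + 1 < b$ ensures that for each integer $k$ the relative position of $k$ and $k-1$ is unchanged by the swap of $a$ and $b$, so the recursive cocharge labeling assigns the same label to every letter, and hence $\cc(\tilde w)$ is obtained from $\cc(w)$ by transposing the entries at positions $i$ and $i+1$. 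Moreover, since in the labeling the label of $k$ always lies in $\{L(k-1), L(k-1)+1\}$, $L$ is nondecreasing in $k$, so $a < b$ gives $\delta := \cc(w)_i - \cc(w)_{i+1} = L(a) - L(b) \leq 0$; the case $\delta = 0$ yields $\cc(\tilde w) = \cc(w)$ and the claim is immediate, so I henceforth assume $\delta < 0$.

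Next, let $T_w$ be the tuple filling of shape $\ctype(w)$ produced by Algorithm \ref{alg: cat insertion} from $\cc(w)$; it satisfies the three conditions of Lemma \ref{lemma:row column increase} with respect to $\cc(w)$. Let $A$ be the box whose tuple $(k_A, j_A)$ records input position $i$ (so $j_A = n - i + 1$), and $B$ the box recording position $i+1$ (so $j_B = n - i = j_A - 1$). Define $S$ to agree with $T_w$ everywhere except at $A$ and $B$, where I interchange the second coordinates: $A$ holds $(k_A, j_B)$ and $B$ holds $(k_B, j_A)$. Condition (i) is preserved trivially, and condition (iii) for $\cc(\tilde w)$ holds automatically from $\cc(\tilde w)_i = \cc(w)_{i+1}$ and $\cc(\tilde w)_{i+1} = \cc(w)_i$.

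The main obstacle is verifying condition (ii), that every row and column of $S$ is strictly lex-increasing. Since $j_B = j_A - 1$, the swap strictly decreases the tuple at $A$ and strictly increases the one at $B$. For any non-$B$ neighbor of $A$, a newly failed row or column relation in $S$ would force that neighbor to carry the tuple $(k_A, j_B)$, identifying it with $B$ by uniqueness of the second coordinate, a contradiction; the symmetric argument handles non-$A$ neighbors of $B$. For adjacency between $A$ and $B$, the identity $k_A - k_B = (r_A - r_B) - \delta$ combined with $\delta < 0$ gives $k_A > k_B$ when $r_A = r_B$ and $k_A - k_B = 1 - \delta > 0$ when $A$ lies directly above $B$, so the swapped tuples still respect the lex order in these cases. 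The remaining configurations ($B$ directly to the right of $A$ in a row, or $B$ directly above $A$ in a column) would force $k_A \geq k_B$ and therefore violate the row- or column-increase of $T_w$, so they cannot occur. This establishes (ii), and applying Algorithm \ref{alg:chains} to $(\cc(\tilde w), \emptyset, S)$ yields the desired inequality $\ctype(\tilde w) \unrhd \ctype(w)$.
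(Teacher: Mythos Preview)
Your proof is correct and follows essentially the same approach as the paper: observe that swapping $w_i$ and $w_{i+1}$ only transposes the corresponding entries of the cocharge word, build a modified tuple-filling of shape $\ctype(w)$ by exchanging the second coordinates at the two affected boxes, verify the hypotheses of Lemma~\ref{lemma:row column increase}, and then invoke Algorithm~\ref{alg:chains}. Your verification of condition~(ii) is more carefully spelled out than the paper's (which simply notes that the relative lex order of entries can change only when $k_1 = k_2$ and then rules out same-row and same-column placement in that case), and your separate treatment of $\delta = 0$ is harmless though in fact vacuous: the argument you give for strict inequality in your first paragraph can be sharpened to show that $L(a) < L(b)$ always holds, since $L(a) = L(b)$ would force $a, a+1, \dots, b$ to occupy strictly increasing positions, impossible with $a$ at position $i$ and $b$ at position $i+1$.
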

\begin{proof}
    We know  $\cc(w)_i < \cc(w)_{i+1}$, since they differ by more than one. This swap does not change the cocharge label of any of the letters. 
    Thus $\cc(\tilde{w})$ is the word we get by swapping $\cc(w)_i$ and $\cc(w)_{i+1}$ in $\cc(w)$.

    Let $T_w$ be the filling of $\ctype(w)$ we get from Algorithm \ref{alg: cat insertion}, where we fill a box with tuple $(k,i)$ if we add the box when reading the letter $\cc(w)_{n-i + 1}$ for the $k$th time. There exist unique $k_1,k_2$ such that $(k_1,n-i+1), (k_2,n-i)$ appear in $T_w$. Replace these two tuples with  $(k_1,n-i), (k_2,n-i+1)$ to account for the swapping of $\cc(w)_i$ and $\cc(w)_{i+1}$. Denote the resulting filling by $T_{w}^{\tilde{w}}$.

    The filling $T_{w}^{\tilde{w}}$ satisfies the conditions (1), (3) in Lemma \ref{lemma:row column increase} by construction. We can check that it satisfies (2) as well. 
    Note that the only case where swapping the second coordinates changes the relative order on the elements in $T_{w}^{\tilde{w}}$ is if $k_1 = k_2$. 
    In this case, the potential issue occurs if $(k_1,n-i), (k_2,n-i+1)$ appear in the same row or column of $T_w$.
    However, this never happens. Since $\cc(w)_i \neq \cc(w)_{i+1}$, if the two appeared in the same row we would have $k_1 > k_2$ from Lemma \ref{lemma:row is sum of cocharge label and mult}.
    If $k_1 = k_2$, then $(k_1,n-i)$ appears in a higher row than $ (k_2,n-i+1)$. Since we add $(k_1,n-i)$ to $T_w$ before we add $(k_2,n-i+1)$, it is impossible that these two tuples appear in the same column of $T_w$. Hence $T_{w}^{\tilde{w}}$ satisfies (2).

    Now, we apply Algorithm \ref{alg:chains} to the initial input $(\cc(\tilde{w})), \emptyset, T_{w}^{\tilde{w}})$. 
    Since $\text{shape}(T_{w}^{\tilde{w}}) = \ctype(w)$ and the resulting tuple is $(\emptyset, \ctype(\tilde{w}), T_{\tilde{w}})$ with $\shape (T_{\tilde{w}}) = \ctype(\tilde{w})$, it follows that $\ctype(\tilde{w}) \unrhd \ctype(w)$. 
\end{proof}

\section{Antisymmetric part}\label{sec: remarks}
In the work of Garsia--Procesi \cite[Section 3]{GP}, the ungraded Frobenius character of $R_\mu$ is identified to be the complete homogeneous symmetric function $h_\mu$ directly from the structure of $R_\mu$. 
However, they require further properties of $R_\mu$, as well as the modified Hall-Littlewood polynomials, to identify the graded Frobenius character. 

In this section, we use properties of our charge monomial basis $\{\mathbf{x}^{\cw(w)} \ | \ \ctype(P(w)^t)\unrhd \mu\}$ to give a direct, elementary proof of the fact that $\Frobq (R_{\mu}) = \mHL_{\mu}[X;q]$ that only depends on the well-known combinatorial formula \eqref{eq:catabolizable HL formula} and the ungraded character.

We do this by showing that for any composition $\gamma = (\gamma_1,\dots, \gamma_l)\models n$, we have $\Hilb(N_\gamma R_{\mu}) = \langle e_\gamma , \mHL_{\mu}[X;q]\rangle$, where $N_\gamma$ is the antisymmetrizer with respect to the Young subgroup $S_\gamma = S_{\gamma_1}\times \cdots\times  S_{\gamma_l} \subset S_n$:  
\begin{align*}
    N_\gamma = \sum\limits_{\sigma\in S_\gamma} \text{sgn}(\sigma)\sigma.
\end{align*}

Recall that we have the following formula for  $\mHL_{\mu}[X;q]$ for $\mu\vdash n$, using Proposition \ref{thm: charge cocharge trabspose}:
\begin{align}\label{eq: cat HL with transpose}
\mHL_{\mu}[X;q] =  \sum\limits_{\substack{S\in \syt\\  \ctype(S^t)\unrhd\mu}}  q^{\charge(S)} s_{\shape(S^t)}.
\end{align}
We apply $\omega$ to both terms. Since $\{m_\lambda\}, \{h_\gamma\}$ are dual with respect to the inner product, we get
\begin{align*}
     \langle e_{\gamma},\  \mHL_{\mu}  [X;q] \rangle =  \langle h_{\gamma},\  \omega \mHL_{\mu} [X;q] \rangle = \langle m_\gamma\rangle  \ \omega \mHL_{\mu} [X;q] , 
\end{align*}
where $\langle m_\gamma\rangle  \ \omega \mHL_{\mu} [X;q] $ denotes the coefficient of $m_\gamma$ in the monomial symmetric function expansion of $\omega \mHL_{\mu} [X;q] $.
We compute $\omega \mHL_{\mu} [X;q] $ using $\omega s_\lambda = s_{\lambda^t}$ for any partition $\lambda$ and the combinatorial formula \eqref{eq: cat HL with transpose}:
\begin{align*}
    \omega \mHL_{\mu} [X;q]   &= \sum\limits_{\substack{S\in \syt\\  \ctype(S^t)\unrhd\mu}}  q^{\charge(S)}s_{\shape(S)}  .
\end{align*}

From the above, we can see
\begin{align*}
    \langle e_{\gamma},\  \mHL_{\mu}  [X;q] \rangle &=  \sum\limits_{\substack{S \in \syt\\  \ctype(S^t)\unrhd\mu}}  q^{\charge(S)} \left(\langle m_\gamma \rangle \   s_{\shape(S)} \right) \\&= \sum\limits_{\substack{S\in \syt\\  \ctype(S^t)\unrhd\mu}}  q^{\charge(S)}   K_{\shape(S),\gamma}.
\end{align*}

Furthermore, we can rewrite expression \eqref{eq: kostka numb} into a statement about permutations:
\begin{align*}
    K_{\shape(S),\gamma} &=  | \{w \in S_n  \ | \ P(w) = S, \des(w) \subset \{\gamma_1,\gamma_1 + \gamma_2, \dots, \gamma_1 +\cdots + \gamma_{l-1}\} \} |.
\end{align*}

Combining these observations, we have the following equality:
\begin{align}\label{eq:Hilb using words}
    \langle e_{\gamma},\  \mHL_{\mu} [X;q]\rangle  = \sum\limits_{\substack{w\in S_n\\  \ctype(P(w)^t)\unrhd\mu \\ \des(w) \subset \{\gamma_1,\gamma_1 + \gamma_2, \dots, \gamma_1 +\cdots + \gamma_{l-1}\} }}  q^{\charge(w)}.
\end{align}

From this, proving $\Frob_q(R_{\mu}) = \mHL_{\mu}[X;q]$ is equivalent to showing that for any $\gamma\models n$, the Hilbert series $\Hilb(N_\gamma R_{\mu})$ is equal to the right hand expression of \eqref{eq:Hilb using words}.
We show this by proving that the natural subset of charge monomials to expect from Equation \eqref{eq:Hilb using words} give a basis of $N_\gamma R_{\mu}$ by antisymmetrization.

Using the ungraded character of $R_\mu$, given by $h_\mu$, along with Proposition \ref{prop: dim and antisym}, we can make the following observation:
\begin{lemma}\label{lem:antisymm dimension}
    The dimension of $N_\gamma R_{\mu}$ is 
    \[|\{ w \in S_n, \ \ctype(P(w)^t) \unrhd\mu, \des(Q(w)) \subset \{\gamma_1,\gamma_1 + \gamma_2, \dots, \gamma_1 + \cdots + \gamma_{l-1}\}|\}.\]
\end{lemma}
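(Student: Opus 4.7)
The plan is to combine Proposition \ref{prop: dim and antisym} with the Garsia--Procesi identification of the ungraded Frobenius character and with the $q=1$ specialization of \eqref{eq:Hilb using words}. By Proposition \ref{prop: dim and antisym},
\[\dim(N_\gamma R_{\mu^t}) = \langle e_\gamma, \Frob(R_{\mu^t})\rangle,\]
and since the ungraded Frobenius character of $R_{\mu^t}$ is $h_{\mu^t}$ (established in \cite{GP} directly from the ring structure, independent of the graded theory), the lemma reduces to the symmetric function identity
\[\langle e_\gamma, h_{\mu^t}\rangle = \left|\{w \in S_n : \ctype(P(w)^t) \unrhd \mu^t,\ \des(Q(w)) \subset \{\gamma_1, \gamma_1+\gamma_2, \ldots, \gamma_1 + \cdots + \gamma_{l-1}\}\}\right|.\]

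I would deduce this identity by setting $q = 1$ in equation \eqref{eq:Hilb using words}. On the left, the definition $\mHL_\mu[X;q] = q^{n(\mu)} H_\mu[X;q^{-1}]$ together with the well-known specialization $H_\mu[X;1] = h_\mu$ gives $\mHL_{\mu^t}[X;1] = h_{\mu^t}$, so $\langle e_\gamma, \mHL_{\mu^t}[X;1]\rangle = \langle e_\gamma, h_{\mu^t}\rangle$. On the right, the charge weights collapse to $1$, leaving an unweighted count of permutations with the stated catabolizability and descent conditions, but phrased in terms of $\des(w)$ rather than $\des(Q(w))$. Applying the classical RSK fact $\des(w) = \des(Q(w))$ recalled in Section \ref{subsec:RSK} then converts the count into the form stated in the lemma.

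There is no substantive obstacle here: the proof is a short assembly of previously established results. The only point requiring care is confirming that \eqref{eq:Hilb using words} and the identification $\Frob(R_{\mu^t}) = h_{\mu^t}$ are both logically prior to the main theorem the section is building towards, so that no circularity arises. Indeed, \eqref{eq:Hilb using words} was derived in this section purely from the Lascoux--Butler combinatorial formula \eqref{eq:catabolizable HL formula} (a symmetric function identity) together with standard Schur/Kostka manipulations and the basic bijection $w \leftrightarrow (P(w),Q(w))$, and the ungraded character of $R_{\mu^t}$ is an independent classical input from \cite{GP}.
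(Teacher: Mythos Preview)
Your proof is correct and follows essentially the same route as the paper: use Proposition~\ref{prop: dim and antisym} and the ungraded identification $\Frob(R_{\mu^t})=h_{\mu^t}$, then specialize \eqref{eq:Hilb using words} at $q=1$ via $\mHL_{\mu^t}[X;1]=h_{\mu^t}$. You are in fact slightly more explicit than the paper in noting the final passage from $\des(w)$ to $\des(Q(w))$ and in checking non-circularity, but the argument is the same.
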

\begin{proof}
    Since the ungraded Frobenius character of $R_{\mu}$ is $h_{\mu}$, we know that $\dim(N_\gamma R_{\mu}) = \langle e_{\gamma},\ h_{\mu} \rangle$.
    Using the fact that $\mHL_{\mu}[X;1] = h_{\mu}$, we have:
    
    \[\dim(N_\gamma R_{\mu}) = \langle e_{\gamma},\ h_{\mu} \rangle =   \langle e_{\gamma},\mHL_{\mu}[X;1] \rangle = \sum\limits_{\substack{w\in S_n\\  \ctype(P(w)^t)\unrhd\mu \\ \des(w) \subset \{\gamma_1,\gamma_1 + \gamma_2, \dots, \gamma_1 +\cdots + \gamma_{l-1}\} }}  1.\]
\end{proof}

We now use this fact to determine a basis of $N_\gamma R_{\mu}$.
\begin{prop2}
\makeatletter\def\@currentlabel{C}\makeatother
\label{prop:prop2}

    Let $\mu \vdash n$  and $\gamma = (\gamma_1,\dots, \gamma_l)\models n$. 
    The set
    \begin{align}\label{eq:antisym basis}
    \{ N_\gamma \mathbf{x}^{\cw(w)} \ | \  w \in S_n, \ \ctype(P(w)^t) \unrhd\mu, \des(Q(w)) \subset \{\gamma_1,\gamma_1 + \gamma_2, \dots, \gamma_1 + \cdots + \gamma_{l-1}\}\}.
    \end{align}
    is
    a basis of $N_\gamma R_{\mu}$.
\end{prop2}

\begin{proof}

We first show that the terms in \eqref{eq:antisym basis} are nonzero. 
Consider $w\in S_n$ such that $\ctype(\rev(w)) \unrhd \mu,$ and $\des(w) \subset \{\gamma_1,\gamma_1 + \gamma_2, \dots, \gamma_1 + \cdots + \gamma_{l-1}\}.$ 
We divide $w$ into blocks of size $\gamma_1,\gamma_2,\dots, \gamma_{l}$ where the entries within each block are strictly increasing. 
This implies that no two entries in the same block have the same charge value.
Note that $S_\gamma$ permutes indices within the blocks, thus each monomial term of $N_{\gamma}x^{\cw(w)}$ is distinct. 
Hence $N_{\gamma} x^{\cw(w)} \neq 0$.
From Lemma \ref{lem:antisymm dimension}, this shows that our set has the correct cardinality to be a basis of $N_{\gamma}R_{\mu}$. 

Now, we show that \eqref{eq:antisym basis} spans $N_\gamma R_{\mu}$. The set $\{N_\gamma \mathbf{x}^\alpha \ | \ \alpha \in \mathcal{C}_\mu\}$
spans $N_\gamma R_{\mu}$ since $\{\mathbf{x}^\alpha \ | \ \alpha \in \mathcal{C}_\mu\}$ is a basis of $R_{\mu}$.
Consider $w\in S_n$ such that $\ctype(\rev(w))\unrhd\mu$, $N_\gamma x^{\cw(w)} \neq 0$ but $w$ does not have the correct descent set. We will show that $N_\gamma\mathbf{x}^{\cw(w)} = N_\gamma\mathbf{x}^{\cw(\text{sort}_\gamma (w))}$  for some $\text{sort}_\gamma(w)\in S_n$ with the correct descent set and catabolizability type.

If we divide $\cw(w)$ into blocks of size $\gamma_1,\gamma_2,\dots, \gamma_{l}$, the entries within a block all have distinct charge labels since $N_\gamma x^{\cw(w)} \neq 0$. 
From this, we can rearrange the entries in $w$ so that they are strictly increasing within the blocks without changing the charge labels of the respective entries. That is: if $w_i > w_{i+1}$ with $\cw(w)_i \neq \cw(w)_{i+1}$, we know $w_i \neq (w_{i+1} + 1)$, hence we can swap $w_i$ and $w_{i+1}$ without changing the respective charge labels.
We can repeat this until we sort $w$ so that it is increasing within the blocks. Let $\text{sort}_\gamma(w)$ be the resulting permutation.
We see that $\des(\text{sort}_\gamma(w) )\subset \{\gamma_1,\gamma_1+\gamma_2,\dots, \gamma_1+ \cdots \gamma_{l-1}\}.$ 
 By Proposition \ref{prop: swaps raise ctype}, we know $\ctype(\rev(\text{sort}_\gamma(w))) \unrhd \ctype(\rev(w)) \unrhd \mu$, hence $N_\gamma \mathbf{x}^{\cw(\text{sort}_\gamma(w))}$ is in  \eqref{eq:antisym basis}.
\end{proof}

Using the construction of our basis, we can translate questions about the structure of $R_{\mu}$ into questions about conditions on tableaux.
In this case, choosing the elements that are a basis of $N_\gamma R_{\mu}$ is equivalent to looking at pairs of standard tableaux $(P,Q)$ of the same shape with conditions on both $P$ and $Q$. 
We illustrate this with an example:
\begin{ex}
Consider $\mu= (2,1,1)$ and $\gamma = (2,2)$. All standard tableaux $P$ that satisfy $\ctype(P^t) \unrhd \mu$ are listed below:
   \ytableausetup{mathmode,smalltableaux,  aligntableaux = bottom}
\[\ytableaushort{2,134} \ , \  \ytableaushort{24,13} \ , \ \ytableaushort{4,2,13} \ , \  \ytableaushort{3,2,14}\ , \ \ytableaushort{4,3,2,1}. \]
We also list all standard tableaux $Q$ such that $\des(Q) \subset \{2\}$:
\[\ytableaushort{1234} \ , \  \ytableaushort{3,124}, \ \ \ytableaushort{34,12}. \]
Thus the pairs $(P,Q)$ of standard tableaux of the same shape that satisfy $\ctype(P^t) \unrhd \mu$ and $\des(Q) = \{\gamma_1\}$ are the pairs $ \ytableausetup{smalltableaux,  aligntableaux = top}(\ytableaushort{2,134},\ytableaushort{3,124})$ and $(\ytableaushort{24,13},\ytableaushort{34,12})$.
These pairs corresponds to the words $2314, 2413$, which give us the charge monomials $x_2x_4^2, x_2x_4.$
From this, we know the basis of $N_\gamma R_{\mu}$ is given by the polynomials
\[\{ x_2x_4^2 - x_1x_4^2 - x_2x_3^2 + x_1x_3^2, x_2x_4 - x_1x_4 - x_2x_3 + x_1x_3 \}.\]

\end{ex}

The following corollary is immediate from Proposition \ref{prop:prop2}.

\begin{cor}
For any partition $\mu$ of $n$ we have $\Frobq(R_{\mu}) =\mHL_{\mu}[X;q].$
\end{cor}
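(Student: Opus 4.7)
The plan is to derive this as an immediate consequence of Proposition B, the identity $\Hilbq(N_\gamma V) = \langle e_\gamma, \Frobq(V)\rangle$ for graded modules from Section \ref{subsec: symm functions}, and the combinatorial expansion of $\langle e_\gamma, \mHL_{\mu^t}[X;q]\rangle$ recorded in Equation \eqref{eq:Hilb using words}. The idea is to pin down $\Frobq(R_{\mu^t})$ by its pairings against every $e_\gamma$.

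First I would extract $\Hilbq(N_\gamma R_{\mu^t})$ directly from the basis produced by Proposition B. Because antisymmetrization preserves total degree, each basis element $N_\gamma \mathbf{x}^{\cw(w)}$ is homogeneous of degree $\charge(w)$, so summing the $q$-weights over the basis yields
\begin{align*}
\Hilbq(N_\gamma R_{\mu^t}) \;=\; \sum_{\substack{w \in S_n \\ \ctype(P(w)^t)\unrhd \mu^t \\ \des(Q(w)) \subset \{\gamma_1,\, \gamma_1+\gamma_2,\, \dots,\, \gamma_1+\cdots+\gamma_{l-1}\}}} q^{\charge(w)}.
\end{align*}
By Equation \eqref{eq:Hilb using words}, this sum is exactly $\langle e_\gamma, \mHL_{\mu^t}[X;q]\rangle$, while by the graded Frobenius reciprocity statement it also equals $\langle e_\gamma, \Frobq(R_{\mu^t})\rangle$. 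Thus $\langle e_\gamma, \Frobq(R_{\mu^t})\rangle = \langle e_\gamma, \mHL_{\mu^t}[X;q]\rangle$ for every composition $\gamma\models n$.

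Since $\{e_\lambda : \lambda\vdash n\}$ is a basis of the degree-$n$ part of $\Lambda$ (its image under $\omega$ is $\{h_\lambda\}$, which is dual to the basis $\{m_\lambda\}$ under the Hall pairing), agreement of all pairings against $e_\gamma$ forces the two symmetric functions to coincide, yielding $\Frobq(R_{\mu^t}) = \mHL_{\mu^t}[X;q]$. The whole argument is pure bookkeeping: Proposition B absorbs the substantive content, and the only point to verify at this stage is that the basis of $N_\gamma R_{\mu^t}$ in \eqref{eq:antisym basis} genuinely refines by degree, which is automatic since each element is a scalar multiple of a homogeneous polynomial. I therefore do not anticipate any real obstacle at this step.
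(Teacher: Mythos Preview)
Your proposal is correct and follows essentially the same argument as the paper: both compute $\Hilbq(N_\gamma R_{\mu^t})$ from the basis in Proposition~B, identify it with $\langle e_\gamma,\mHL_{\mu^t}[X;q]\rangle$ via Equation~\eqref{eq:Hilb using words} (using $\des(w)=\des(Q(w))$), equate it with $\langle e_\gamma,\Frobq(R_{\mu^t})\rangle$ by graded Frobenius reciprocity, and conclude since pairings against all $e_\gamma$ determine a degree-$n$ symmetric function. Your write-up is slightly more explicit about the degree-preservation of $N_\gamma$ and the basis property of $\{e_\lambda\}$, but the substance is identical.
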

\begin{proof}
The graded Frobenius character of a $\mathbb{C}S_n$-module $V$ is uniquely determined by the values $\langle e_\gamma , \Frob_q(V)\rangle$ for all $\gamma\vdash n$.
From Proposition \ref{prop:prop2}, we have
\[\Hilb(N_\gamma R_{\mu})  = \sum\limits_{\substack{w\in S_n\\  \ctype(P(w)^t)\unrhd\mu \\ \des(w) \subset \{\gamma_1,\gamma_1 + \gamma_2, \dots, \gamma_1 +\cdots + \gamma_{l-1}\} }}  q^{\charge(w)} \]
which is equal to $\langle e_\gamma , \Frob_q(R_{\mu})\rangle$ by Equation \eqref{eq:Hilb using words}. Thus $\Frob_q(R_{\mu}) = \mHL_{\mu}[X;q].$
\end{proof}

\begin{rem}
    Note that to avoid circular reasoning, we must ensure that we can show \eqref{eq: our basis} is a basis of $R_\mu$ without using the fact that $\Frob_q(R_\mu) =\mHL_\mu[X;q]$. Using the fact that $\mathcal{C}_\mu\subset \mathcal{D}_{\mu^t}$ and that $\mathbf{x}^{\mathcal{D}_{\mu^t}}$ is linearly independent in $R_\mu$ \cite[Corollary 3.14]{CC}, we have that \eqref{eq: our basis} is linearly independent in $R_\mu$. Furthermore, using the ungraded character, we have
    \[\dim(R_\mu) = \langle h_{1^n}, \Frob(R_\mu)\rangle = \langle h_{1^n}, h_\mu \rangle = \langle h_{1^n}, \tilde{H}_\mu[X;1] \rangle =|\mathcal{C}_\mu|. \]
    Thus we can show our set is a basis using only the ungraded character of $R_\mu$.
\end{rem}

Carlsson-Chou \cite{CC} have a similar result on the antisymmetric component with respect to a certain Young subgroup. 
We define $\sh_\gamma$ (resp. $\sh'_\gamma)$ to be the set of all permutations such that $\{1,2,\dots, \gamma_1\}, \{\gamma_1+1,\dots, \gamma_1+\gamma_2\}, \dots, \{\gamma_1+\cdots + \gamma_{l-1} + 1, \dots, n\}$ appear in increasing (resp. decreasing) order.

\begin{thm}[Theorem 3.8.2, Carlsson-Chou \cite{CC}]
    The set $\{N_\gamma g_\sigma(\mathbf{x}) \ | \  g_\sigma(\mathbf{x}) = \mathbf{x}^\alpha \text{ for }\alpha\in \mathcal{D}_\mu, \sigma \in \sh'_{\gamma}\}$ is a basis of $N_\gamma R_{\mu^t}$.
\end{thm}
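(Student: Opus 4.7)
The plan is to deduce this theorem as a direct corollary of Proposition B combined with Theorem A, by showing that the two indexing sets produce the same collection of polynomials after antisymmetrization. The core idea is that Lemma \ref{lemma:charge and majt} already provides a bijection between charge monomials and Garsia--Stanton descent monomials, so the only work is to match the conditions ``$\sigma \in \sh'_\gamma$'' with ``$\des(Q(w)) \subset \{\gamma_1,\gamma_1+\gamma_2,\ldots,\gamma_1+\cdots+\gamma_{l-1}\}$'' under this bijection.

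First I would set up the correspondence. For each $w \in S_n$, define $\sigma = \rev(w^{-1})$; Lemma \ref{lemma:charge and majt} then gives $\mathbf{x}^{\cw(w)} = g_\sigma(\mathbf{x})$, so $w \mapsto \sigma$ is a bijection on $S_n$ intertwining charge monomials and descent monomials. By Theorem A we have $\mathbf{x}^{\mathcal{C}_\mu} = \mathbf{x}^{\mathcal{D}_\mu}$, and hence the condition $g_\sigma(\mathbf{x}) \in \mathbf{x}^{\mathcal{D}_\mu}$ is equivalent to $\ctype(P(w)^t) \unrhd \mu^t$.

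Next I would translate $\sigma \in \sh'_\gamma$ into a descent condition on $w$. A direct computation from $\sigma = \rev(w^{-1})$ yields $\sigma^{-1}(k) = n + 1 - w(k)$. The condition that each block $\{\gamma_1 + \cdots + \gamma_{k-1} + 1, \ldots, \gamma_1 + \cdots + \gamma_k\}$ appears in decreasing order in $\sigma$ says that for any $i < j$ in the same block, $\sigma^{-1}(i) > \sigma^{-1}(j)$, which the above formula converts to $w(i) < w(j)$. Thus $\sigma \in \sh'_\gamma$ if and only if $w$ is increasing on each block, i.e. $\des(w) \subset \{\gamma_1, \gamma_1 + \gamma_2, \ldots, \gamma_1 + \cdots + \gamma_{l-1}\}$. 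Finally, the standard RSK fact $\des(w) = \des(Q(w))$ recorded in Section \ref{subsec:RSK} shows this is exactly the descent condition appearing in Proposition B.

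Combining these observations, the bijection $w \leftrightarrow \sigma$ restricts to a bijection between the indexing set of Proposition B and the indexing set of the Carlsson--Chou theorem, and under it the polynomials $N_\gamma \mathbf{x}^{\cw(w)}$ and $N_\gamma g_\sigma(\mathbf{x})$ are literally equal. Therefore the two sets of basis elements coincide and the theorem follows from Proposition B. The only potential obstacle is the bookkeeping in the permutation identity $\sigma^{-1}(k) = n + 1 - w(k)$ and its consequence for descents, but this is a routine verification and nothing conceptually new beyond the earlier results is required.
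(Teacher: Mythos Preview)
Your proposal is correct and takes essentially the same approach as the paper. The paper does not give an independent proof of this Carlsson--Chou theorem but, in the paragraph immediately following its statement, observes exactly what you argue: using $\mathbf{x}^{\cw(w)} = g_{\rev(w^{-1})}(\mathbf{x})$ together with Theorem~A and Proposition~B, one checks the equivalence
\[\des(w) \subset \{\gamma_1,\gamma_1 + \gamma_2, \dots, \gamma_1 + \cdots + \gamma_{l-1}\} \Leftrightarrow w^{-1} \in \sh_\gamma  \Leftrightarrow \rev(w^{-1}) \in \sh'_\gamma,\]
so the two bases literally coincide; your computation $\sigma^{-1}(k)=n+1-w(k)$ is precisely the routine verification underlying this chain of equivalences.
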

Similar to what we do above, they use their result to show that $\Frobq(R_{\mu}) = \mHL_{\mu}$ by deriving a combinatorial formula for $\langle m_\gamma\rangle  \  \omega \mHL_{\mu}$ using parking functions and shuffle combinatorics which matches their description of the basis.

In contrast to their result, ours arises naturally from the catabolizability formula \eqref{eq:catabolizable HL formula} for $\mHL_{\mu}$. 
Our proof of the result is also independent of theirs: we do not relate their classification to ours when we show Proposition \ref{prop:prop2}, though they use a similar argument to show that their basis is linearly independent. 
However, it is easy to see that the two sets coincide by using $x^{\cw(w)} = g_{\rev(w^{-1})}(\mathbf{x})$ to show that the two conditions are equivalent:
\[\des(w) \subset \{\gamma_1,\gamma_1 + \gamma_2, \dots, \gamma_1 + \cdots + \gamma_{l-1}\} \Leftrightarrow w^{-1} \in \sh_\gamma  \Leftrightarrow \rev(w^{-1}) \in \sh'_\gamma.\]

\section*{acknowledgements}The author thanks Mark Haiman for helpful conversation and guidance, as well as the anonymous referees. 

\newpage
\section*{appendix}\label{sec: sppendix}
\subsection*{Example of Algorithm \ref{alg:chains}}
Let $w = 3 \ \ 9 \ \ 1 \ \ 2 \ \ 6 \ \ 7 \ \ 10 \ \ 3 \ \ 8 \ \ 4.$

Then $\cc(w) = \textcolor{blue}{\textbf{1}} \ \ \textcolor{blue}{\textbf{2}} \ \ \textcolor{blue}{\textbf{0}} \ \ \textcolor{blue}{\textbf{0}} \ \ \textcolor{red}{1} \ \ \textcolor{blue}{\textbf{1}} \ \ \textcolor{blue}{\textbf{2}} \ \ \textcolor{red}{0} \ \ \textcolor{red}{1}\ \ \textcolor{red}{0} \in \sh (\textcolor{blue}{\textbf{120012}},\textcolor{red}{1010})$, where we have colored the entries to denote how the two words are shuffled. 

The initial input when applying Algorithm \ref{alg:chains} is given by:
\ytableausetup{textmode, boxframe=normal, boxsize=2em, aligntableaux=center}
\[\left(1\ \ 2 \ \ 0 \ \ 0 \ \ 1 \ \ 1 \ \ 2 \ \ 0 \ \ 1\ \  0, \ \emptyset  \ , \ \ \ 
\begin{ytableau}
\footnotesize(3,4) \\ \footnotesize(2,9)\\ \footnotesize(2,5) \\   \footnotesize(1,2) & \footnotesize(1,6) & \footnotesize(1,10) \\\footnotesize(1,1) & \footnotesize(1,3)& \footnotesize(1,7)& \footnotesize(1,8) 
\end{ytableau} \ \ \right).\]

We can see that the first three steps of algorithm correspond to adding boxes $(1,1),(1,2),(1,3)$ to the partition in the second coordinate. We do not modify the filling in the last coordinate. The resulting triple we get is:

\begin{align}\label{appendix: ex1}
    \left(1\ \ 2 \ \ 0 \ \ 0 \ \ 1 \ \ 1 \ \ \underline{\mathbf{2}} \ \ \ \rule{.2cm}{0.15mm} \ \ \rule{.2cm}{0.15mm} \ \  \rule{.2cm}{0.15mm} \ , \ \  \ytableausetup{smalltableaux} \ydiagram{1,2}  \ , \ \ \ 
\ytableausetup{textmode, boxframe=normal, boxsize=2em, aligntableaux=center} \begin{ytableau}
\scriptsize(3,4) \\ \scriptsize(2,9)\\ \scriptsize(2,5) \\  *(lightgray) \scriptsize(1,2) & \scriptsize(1,6) & \scriptsize(1,10) \\ *(lightgray)\scriptsize(1,1) & *(lightgray)\scriptsize(1,3)& \scriptsize(1,7)& \scriptsize(1,8) 
\end{ytableau} \ \ \right).
\end{align}

The entries have been read up to this point are exactly those in the shape $\nu = \ytableausetup{smalltableaux} \ydiagram{1,2}$. These boxes are shaded.

At the fourth step (which corresponds to reading $\underline{\mathbf{2}}$), we can see that we will add a box to the partition in the second coordinate. However, we can see that $(3,4)$ is in the fifth row. Thus, we delete $(3,4)$ and insert $(1,4)$ into the third row of our filling:

\[ \ytableausetup{textmode, boxframe=normal, boxsize=2em, aligntableaux=center}\begin{ytableau}
  \none  \\ \none[(1,4) $\rightarrow$] &  \none \\  \none \\  \none 
\end{ytableau}  
\begin{ytableau}
 \scriptsize(2,9)\\ \scriptsize(2,5) \\   *(lightgray)\scriptsize(1,2) & \scriptsize(1,6) & \scriptsize(1,10) \\ *(lightgray)\scriptsize(1,1) & *(lightgray)\scriptsize(1,3)& \scriptsize(1,7)& \scriptsize(1,8) 
\end{ytableau} \Rightarrow  \begin{ytableau}
 \scriptsize(2,9)\\ \scriptsize\textbf{(1,4)}  & \scriptsize(2,5) \\   *(lightgray)\scriptsize(1,2) & \scriptsize(1,6) & \scriptsize(1,10) \\ *(lightgray)\scriptsize(1,1) &*(lightgray) \scriptsize(1,3)& \scriptsize(1,7)& \scriptsize(1,8)
\end{ytableau}.\]

The resulting triple after the fourth step is:
 
\[\left(1\ \ 2 \ \ 0 \ \ 0 \ \ 1 \ \ \underline{\textbf{1}} \ \ \rule{.2cm}{0.15mm}\ \ \ \rule{.2cm}{0.15mm} \ \ \rule{.2cm}{0.15mm} \ \  \rule{.2cm}{0.15mm} \ , \ \  \ytableausetup{smalltableaux} \ydiagram{1,1,2}  \ , \ \ \ 
\ytableausetup{textmode, boxframe=normal, boxsize=2em, aligntableaux=center} \begin{ytableau}
 \scriptsize(2,9)\\ *(lightgray)\scriptsize(1,4) &  \scriptsize(2,5) \\   *(lightgray)\scriptsize(1,2) & \scriptsize(1,6) & \scriptsize(1,10) \\*(lightgray)\scriptsize(1,1) & *(lightgray)\scriptsize(1,3)& \scriptsize(1,7)& \scriptsize(1,8)
\end{ytableau}\right).\]

At the fifth step (corresponding to reading \underline{\textbf{1}}), we add a box to the partition in the second coordinate, as well as modify the filling in the third coordinate by deleting (2,5) and  inserting (1,5) into the second row:

\[ \begin{ytableau}
  \none  \\ \none \\ \none[(1,5) $\rightarrow$] &  \none \\  \none 
\end{ytableau}  
\begin{ytableau}
 \scriptsize(2,9)\\ *(lightgray)\scriptsize(1,4)   \\   *(lightgray)\scriptsize(1,2) & \scriptsize(1,6) & \scriptsize(1,10) \\ *(lightgray)\scriptsize(1,1) & *(lightgray)\scriptsize(1,3)& \scriptsize(1,7)& \scriptsize(1,8)\end{ytableau} \Rightarrow \begin{ytableau}
 \scriptsize(2,9)\\ *(lightgray)\scriptsize(1,4)   \\   *(lightgray)\scriptsize(1,2) & \scriptsize\textbf{(1,5)} & \scriptsize(1,10)  & \none & \none[$\rightarrow$ \textbf{(1,6)}.]\\*(lightgray)\scriptsize(1,1) & *(lightgray) \scriptsize(1,3)& \scriptsize(1,7)& \scriptsize(1,8)\end{ytableau}\]

 We see that $(1,6)$ pops out of the second row. We then place $(2,6)$ into the empty spot in the third row, where $(2,5)$ was.

\[\begin{ytableau}
  \none   \\ \none[(2,6) $\rightarrow$] &  \none \\ \none \\  \none 
\end{ytableau} \begin{ytableau}
 \scriptsize(2,9)\\ *(lightgray)\scriptsize(1,4)   \\  *(lightgray) \scriptsize(1,2) & \scriptsize(1,5) & \scriptsize(1,10)  \\ *(lightgray)\scriptsize(1,1) & *(lightgray)\scriptsize(1,3)& \scriptsize(1,7)& \scriptsize(1,8)\end{ytableau} \Rightarrow \begin{ytableau}
 \scriptsize(2,9)\\*(lightgray) \scriptsize(1,4)  &\scriptsize \textbf{(2,6)}   \\ *(lightgray)  \scriptsize(1,2) & \scriptsize(1,5) & \scriptsize(1,10)  \\*(lightgray)\scriptsize(1,1) & *(lightgray)\scriptsize(1,3)& \scriptsize(1,7)& \scriptsize(1,8)\end{ytableau}.\]

 The resulting triple is 

\[\left(1\ \ 2 \ \ 0 \ \ 0 \ \  \underline{\textbf{1}} \ \  \rule{.2cm}{0.15mm} \ \ \rule{.2cm}{0.15mm}\ \ \ \rule{.2cm}{0.15mm} \ \ \rule{.2cm}{0.15mm} \ \  \rule{.2cm}{0.15mm} \ , \ \  \ytableausetup{smalltableaux} \ydiagram{1,2,2}  \ , \ \ \ 
\ytableausetup{textmode, boxframe=normal, boxsize=2em, aligntableaux=center} \begin{ytableau}
 \scriptsize(2,9)\\*(lightgray) \scriptsize(1,4)  &\scriptsize(2,6)  \\  *(lightgray) \scriptsize(1,2) &*(lightgray) \scriptsize(1,5) & \scriptsize(1,10)  \\*(lightgray)\scriptsize(1,1) &*(lightgray) \scriptsize(1,3)& \scriptsize(1,7)& \scriptsize(1,8)\end{ytableau}
\right).\]

At the next step, we cannot add anything to the second row. So we add one to the element in the word without changing either of the shapes:

\[\left(1\ \ 2 \ \ 0 \ \ \textbf{\underline{0}} \ \  2  \ \  \rule{.2cm}{0.15mm} \ \ \rule{.2cm}{0.15mm}\ \ \ \rule{.2cm}{0.15mm} \ \ \rule{.2cm}{0.15mm} \ \  \rule{.2cm}{0.15mm} \ , \ \  \ytableausetup{smalltableaux} \ydiagram{1,2,2}  \ , \ \ \ 
\ytableausetup{textmode, boxframe=normal, boxsize=2em, aligntableaux=center} \begin{ytableau}
 \scriptsize(2,9)\\ *(lightgray)\scriptsize(1,4)  &\scriptsize(2,6)  \\  *(lightgray) \scriptsize(1,2) & *(lightgray)\scriptsize(1,5) & \scriptsize(1,10)  \\ *(lightgray)\scriptsize(1,1) & *(lightgray)\scriptsize(1,3)& \scriptsize(1,7)& \scriptsize(1,8)\end{ytableau}
\right).\]

The next two steps just add boxes to the first row of the partition in the second coordinate without changing the filling. We get
\[\left(1\ \ \textbf{\underline{2}} \ \ \rule{.2cm}{0.15mm} \ \ \rule{.2cm}{0.15mm} \ \  2  \ \  \rule{.2cm}{0.15mm} \ \ \rule{.2cm}{0.15mm}\ \ \ \rule{.2cm}{0.15mm} \ \ \rule{.2cm}{0.15mm} \ \  \rule{.2cm}{0.15mm} \ , \ \  \ytableausetup{smalltableaux} \ydiagram{1,2,4}  \ , \ \ \ 
\ytableausetup{textmode, boxframe=normal, boxsize=2em, aligntableaux=center} \begin{ytableau}
 \scriptsize(2,9)\\ *(lightgray)\scriptsize(1,4)  &\scriptsize(2,6)  \\   *(lightgray)\scriptsize(1,2) & *(lightgray)\scriptsize(1,5) & \scriptsize(1,10)  \\*(lightgray)\scriptsize(1,1) &*(lightgray) \scriptsize(1,3)& *(lightgray)\scriptsize(1,7)& *(lightgray)\scriptsize(1,8)\end{ytableau}
\right).\]

At the next step, we delete $(2,9)$ and insert (1,9) into the third row of the filling to get:

\[\left(1\ \ \rule{.2cm}{0.15mm} \ \ \rule{.2cm}{0.15mm} \ \ \rule{.2cm}{0.15mm} \ \  2  \ \  \rule{.2cm}{0.15mm} \ \ \rule{.2cm}{0.15mm}\ \ \ \rule{.2cm}{0.15mm} \ \ \rule{.2cm}{0.15mm} \ \  \rule{.2cm}{0.15mm} \ , \ \  \ytableausetup{smalltableaux} \ydiagram{2,2,4}  \ , \ \ \ 
\ytableausetup{textmode, boxframe=normal, boxsize=2em, aligntableaux=center} \begin{ytableau}
*(lightgray) \scriptsize(1,4) & *(lightgray)\scriptsize(1,9)  &\scriptsize(2,6)  \\ *(lightgray)  \scriptsize(1,2) & *(lightgray)\scriptsize(1,5) & \scriptsize(1,10)  \\*(lightgray)\scriptsize(1,1) & *(lightgray) \scriptsize(1,3)& *(lightgray)\scriptsize(1,7)& *(lightgray)\scriptsize(1,8)\end{ytableau}
\right).\]

Now, we can continue to cycle through the word and add boxes to the  partition until we get 

\[\left(\rule{.2cm}{0.15mm}\ \ \rule{.2cm}{0.15mm} \ \ \rule{.2cm}{0.15mm} \ \ \rule{.2cm}{0.15mm} \ \  \rule{.2cm}{0.15mm}  \ \  \rule{.2cm}{0.15mm} \ \ \rule{.2cm}{0.15mm}\ \ \ \rule{.2cm}{0.15mm} \ \ \rule{.2cm}{0.15mm} \ \  \rule{.2cm}{0.15mm} \ , \ \  \ytableausetup{smalltableaux} \ydiagram{3,3,4}  \ , \ \ \ 
\ytableausetup{textmode, boxframe=normal, boxsize=2em, aligntableaux=center} \begin{ytableau}
*(lightgray) \scriptsize(1,4) & *(lightgray)\scriptsize(1,9)  &*(lightgray)\scriptsize(2,6)  \\  *(lightgray) \scriptsize(1,2) & *(lightgray)\scriptsize(1,5) & *(lightgray)\scriptsize(1,10)  \\*(lightgray)\scriptsize(1,1) &*(lightgray) \scriptsize(1,3)& *(lightgray)\scriptsize(1,7)& *(lightgray)\scriptsize(1,8)\end{ytableau}
\right).\]

\newpage 
\bibliographystyle{amsplain}
\bibliography{references}

\providecommand{\bysame}{\leavevmode\hbox to3em{\hrulefill}\thinspace}
\providecommand{\MR}{\relax\ifhmode\unskip\space\fi MR }
\providecommand{\MRhref}[2]{%
  \href{http://www.ams.org/mathscinet-getitem?mr=#1}{#2}
}
\providecommand{\href}[2]{#2}
\begin{thebibliography}{10}

\bibitem{Blasiak}
Jonah Blasiak, \emph{An insertion algorithm for catabolizability}, Europ. J. Combinatorics \textbf{33} (2012), no.~2, pp 267 -- 276.

\bibitem{butler}
Lynne~M. Butler, \emph{Subgroup lattices and symmetric functions}, American Mathematical Society: Memoirs of the American Mathematical Society, American Mathematical Society, 1994.

\bibitem{CC}
Erik Carlsson and Raymond Chou, \emph{A descent basis for the {G}arsia-{P}rocesi module}, Advances in Mathematics \textbf{457} (2024), 109945.

\bibitem{deCon-Proc}
Corrado de~Concini and Claudio Procesi, \emph{Symmetric functions, conjugacy classes and the flag variety}, Inventiones mathematicae \textbf{64} (1981).

\bibitem{Fulton-Harris}
William Fulton and Joe Harris, \emph{Representation {T}heory: {A} {F}irst {C}ourse}, Graduate texts in mathematics, Springer, 1991.

\bibitem{GP}
Adriano~M. Garsia and Claudio Procesi, \emph{On certain graded {$S_n$}-modules and the q-{K}ostka polynomials}, Adv. Math \textbf{94.1} (1992), pp 82--138.

\bibitem{HottaSpringer}
Ryoshi Hotta and Tonny~Albert Springer, \emph{A specialization theorem for certain {W}eyl group representations and an application to the green polynomials of unitary groups}, Inventiones mathematicae (1977), 113--127.

\bibitem{Lascoux}
Alain Lascoux, \emph{Cyclic permutations on words, tableaux and harmonic polynomials}, Proc. of the Hyderabad Conference on Algebraic Groups (1989), pp. 323 -- 347.

\bibitem{LS1}
Alain Lascoux and Marcel-Paul Sch\"{u}tzenberger, \emph{Sur une conjecture de {H}. {O}. {F}oulkes}, C. R. Acad. Sci. Paris S´er. I Math \textbf{288} (1978), pp 95 -- 9.

\bibitem{macdonald}
I.G. Macdonald, \emph{Symmetric {F}unctions and {H}all {P}olynomials}, Oxford classic texts in the physical sciences, Clarendon Press, 1998.

\bibitem{SW}
Mark Shimozono and Jerzy Weyman, \emph{Graded characters of modules supported in the closure of a nilpotent conjugacy class}, Europ. J. Combinatorics \textbf{21} (2000), no.~2, pp 257 -- 288.

\bibitem{Springer1978}
Tonny~Albert Springer, \emph{A construction of representations of {W}eyl groups}, Inventiones mathematicae \textbf{44} (1978), 279--293.

\bibitem{EC2}
Richard~P Stanley, \emph{Enumerative {C}ombinatorics, volume 2}, Cambridge Studies in Advanced Mathematics, Cambridge University Press, 1999.

\bibitem{Tanisaki}
Toshiyuki Tanisaki, \emph{Defining ideals of the closures of the conjugacy classes and representations of the weyl groups}, Tohoku Math. J. (2) \textbf{34} (1982), no.~4, 575--585 (en).

\end{thebibliography}

\end{document}